\numberwithin{equation}{section} 
\numberwithin{table}{section}
\theoremstyle{plain}
\newtheorem{theorem}{Theorem}[section]
\newtheorem{proposition}[theorem]{Proposition}
\newtheorem{lemma}[theorem]{Lemma}
\newtheorem{corollary}[theorem]{Corollary}
\newtheorem{remark}[theorem]{Remark}
\newtheorem{definition}[theorem]{Definition}
\newtheorem{claim}{Claim}
\def\R{\mathbb{R}}
\def\Z{\mathbb{Z}}
\def\C{\mathbb{C}}
\def\N{\mathbb{N}}
\def\eps{\varepsilon}
\renewcommand{\Re}{\mathop{\mathfrak{Re}}}
\renewcommand{\Im}{\mathop{\mathfrak{Im}}}
\newenvironment{proofclaim}[1][Proof of the claim]{\begin{proof}[#1]}{\end{proof}}
\newcommand{\resetclaim}{\setcounter{claim}{0}}
\begin{document}

\title[On multiple AB eigenvalues]{On multiple eigenvalues for Aharonov--Bohm operators in planar domains}

\author{Laura Abatangelo}
\address{Laura Abatangelo 
\newline \indent Dipartimento di Matematica e Applicazioni, Università degli Studi di Milano-Bicocca,
\newline \indent  Via Cozzi 55, 20125 Milano, Italy.}
\email{laura.abatangelo@unimib.it}

\author{Manon Nys}
\address{Manon Nys 
\newline \indent Dipartimento di Matematica Giuseppe Peano, Università degli Studi di Torino, 
\newline \indent Via Carlo Alberto 10, 10123 Torino, Italy.}
\email{manonys@gmail.com}

\thanks{}

\date{\today}

\begin{abstract}
  We study multiple eigenvalues of a magnetic Aharonov-Bohm
  operator with Dirichlet boundary
  conditions in a planar domain. 
  In particular, we study the structure of the set of the couples position of the pole-circulation
  which keep fixed the multiplicity of a double eigenvalue of the operator with the pole at the origin 
  and half-integer circulation. 
  We provide sufficient conditions for which this set is made of an isolated point.
  The result confirms and validates a lot of numerical simulations available in preexisting literature.
\end{abstract}

\subjclass[2010]{}

\keywords{Magnetic Schr\"{o}dinger operators,
  Aharonov--Bohm potential, multiple eigenvalues}

\maketitle

\section{Introduction} 

\subsection{Presentation of the problem and main results}

An infinitely long and infinitely thin solenoid, perpendicular to 
the plane $(x_1,x_2)$ at the point $a=(a_1,a_2)\in\R^2$ produces 
a point-like magnetic field whose flux remains constantly equal to 
$\alpha \in \R$ as the solenoid's radius goes to zero. Such a 
magnetic field is a $2 \pi \alpha$-multiple of the Dirac delta at 
$a$, orthogonal to the plane $(x_1,x_2)$; it is generated by the 
Aharonov--Bohm vector potential singular at the point $a$
\begin{equation} \label{eq:magnetic-pot}
A_a^\alpha (x_1, x_2) = \alpha 
\left( - \frac{x_2 - a_2}{(x_1 - a_1)^2 + (x_2 - a_2)^2}, 
\frac{x_1 - a_1}{(x_1 - a_1)^2 + (x_2 - a_2)^2} \right),
\end{equation}
see e.g. \cite{AB,MOR,AdamiTeta1998}. We are interested in the 
spectral properties of the Schrödinger operator with Aharonov--Bohm 
vector potential
\begin{equation} \label{eq:operator}
(i\nabla + A_a^\alpha)^2 u := -\Delta u + 2 i A_a^\alpha \cdot 
\nabla u + |A_a^\alpha|^2 u,
\end{equation}
acting on functions $u \, : \, \R^2 \to \C$. If the circulation 
$\alpha$ is an integer number, the magnetic potential $A_a^\alpha$ 
can be gauged away by a phase transformation, so that the operator 
$(i \nabla + A_a^\alpha)^2$ becomes spectrally equivalent to the 
standard Laplacian. On the other hand, if $\alpha \not \in \Z$ the 
vector potential $A_a^\alpha$ cannot be eliminated by a gauge 
transformation and the spectrum of the operator is modified by the 
presence of the magnetic field. We refer to Section~\ref{sec:gauge} 
for more details.
This produces the so-called 
Aharonov--Bohm effect: a quantum charged particle is affected by 
the presence of the magnetic field, through the circulation of the 
magnetic potential, even if it moves in a region where the magnetic 
field is zero almost everywhere. 

From standard theory, and as detailled in Section~\ref{sec:preli}, 
if $\Omega$ is an open, bounded and simply connected set of $\R^2$,
when considering Dirichlet boundary conditions, the spectrum of the 
operator \eqref{eq:operator} consists of a diverging sequence of 
positive eigenvalues, that we denote $\lambda_k^{(a,\alpha)}$, $k 
\in \mathbb{N} \setminus\{0\}$, to emphasize the dependance on the 
position of the singular pole and the circulation. As well, we denote 
$\varphi_k^{(a,\alpha)}$ the corresponding eigenfunctions normalized 
in $L^2(\Omega,\C)$. Morever, every eigenvalue has a finite multiplicity. 
In the present paper, we begin to study the possible multiple eigenvalues 
of this operator with respect to the two parameters $(a,\alpha)$.

\medbreak

In the set of papers \cite{BonnaillieNorisNysTerracini2014,
NorisNysTerracini2015,Lena2015,AbatangeloFelli2015,AbatangeloFelli2016,
AbatangeloFelliNorisNys2016,AbatangeloFelliNorisNys2017} the authors 
study the behavior of the eigenvalues of operator \eqref{eq:operator} 
when the singular pole $a$ moves in the domain, letting the 
circulation $\alpha$ fixed. In particular, they focused their attention on 
the asymptotic behavior of simple eigenvalues, which are known to be 
analytic functions of the position of the pole, see \cite{Lena2015}. 
We also recall that in the case of multiple eigenvalues, such a 
map is no more analytic but still continuous, as established in 
\cite{BonnaillieNorisNysTerracini2014,Lena2015}. We then recall the two following results.

\begin{theorem}(\cite[Theorem 1.1, Theorem 1.3]{BonnaillieNorisNysTerracini2014},
\cite[Theorem 1.2, Theorem 1.3]{Lena2015}) \label{thm:reg-a}
Let $\alpha \in \R$ and $\Omega \subset \R^2$ be open, bounded 
and simply connected. Fix any $k \in \N \setminus\{0\}$. The map 
$a \in \Omega \mapsto \lambda_k^{(a,\alpha)}$ has a continuous 
extension up to the boundary $\partial \Omega$, that is 
\[
\lambda_k^{(a,\alpha)} \to \lambda_k^{(b,\alpha)} 
\quad \text{ as } a \to b \in \Omega 
\qquad \text{ and } \qquad
\lambda_k^{(a,\alpha)} \to \lambda_k 
\quad \text{ as } a \text{ converges to } \partial \Omega,
\]
where $\lambda_k$ is the $k$-th eigenvalue of the Laplacian with 
Dirichlet boundary conditions. 

Moreover, if $b \in \Omega$ and if $\lambda_k^{(b,\alpha)}$ is a 
simple eigenvalue, the map $a \in \Omega \mapsto \lambda_k^{(a,\alpha)}$ 
is analytic in a neighborhood of $b$.
\end{theorem}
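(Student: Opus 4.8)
The plan is to split Theorem~\ref{thm:reg-a} into three claims — continuity of $a\mapsto\lambda_k^{(a,\alpha)}$ in the interior, the limit $\lambda_k^{(a,\alpha)}\to\lambda_k$ as $a\to\partial\Omega$, and analyticity near a point where the eigenvalue is simple — and to read all the eigenvalues off the magnetic Rayleigh quotients via Courant--Fischer,
\[
\lambda_k^{(a,\alpha)}=\min_{\substack{V\subset H^{1,a}_0(\Omega)\\ \dim V=k}}\ \max_{u\in V\setminus\{0\}}\ \frac{\int_\Omega |(i\nabla+A_a^\alpha)u|^2\dx}{\int_\Omega |u|^2\dx},
\]
where $H^{1,a}_0(\Omega)$ denotes the natural form domain of the operator \eqref{eq:operator} with pole $a$ and Dirichlet conditions. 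The only genuine difficulty is that both the form domain and the potential $A_a^\alpha$ move with $a$; I would neutralise this with a combined change of gauge and change of variables that brings every problem back onto the fixed space $H^{1,b}_0(\Omega)$.

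For the interior continuity, fix $b\in\Omega$; for $a$ near $b$ choose a diffeomorphism $T_a$ of $\overline\Omega$ which is the identity outside a fixed small ball around $b$, is the translation $x\mapsto x+(a-b)$ on a smaller ball, satisfies $T_a(b)=a$, depends smoothly on $a$ and tends to the identity in $C^\infty$ as $a\to b$. Since near $b$ one has $T_a^{*}A_a^\alpha=A_b^\alpha$ exactly (a direct computation on the explicit form \eqref{eq:magnetic-pot}), the closed $1$-form $T_a^{*}A_a^\alpha-A_b^\alpha$ on $\Omega\setminus\{b\}$ has zero circulation around $b$, hence is exact there, $T_a^{*}A_a^\alpha-A_b^\alpha=\nabla\phi_a$ with $\phi_a$ single-valued and smooth on $\overline\Omega$, normalised so that $\phi_a\to 0$ as $a\to b$. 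Then $U_a u:=e^{-i\phi_a}(u\circ T_a)$ is an isomorphism of $H^{1,a}_0(\Omega)$ onto $H^{1,b}_0(\Omega)$, and pushing the derivatives through the phase and changing variables turns the numerator and denominator above into
\[
\int_\Omega \big\langle M_a\,(i\nabla+A_b^\alpha)(U_a u),\,(i\nabla+A_b^\alpha)(U_a u)\big\rangle\,\dx
\qquad\text{and}\qquad
\int_\Omega|U_a u|^2\,|\det DT_a|\,\dx ,
\]
with $M_a$ (a symmetric matrix built from $DT_a$) and $|\det DT_a|$ tending uniformly to the identity and to $1$. Hence, after this identification, the data defining $\lambda_k^{(a,\alpha)}$ are uniformly small perturbations of those defining $\lambda_k^{(b,\alpha)}$; inserting an almost-optimal $k$-dimensional subspace of one problem into the min-max of the other, in both directions, gives $\lambda_k^{(a,\alpha)}\to\lambda_k^{(b,\alpha)}$.

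For the behaviour at the boundary, I would first observe that when $a\in\partial\Omega$ (or $a\notin\overline\Omega$) the closed smooth $1$-form $A_a^\alpha$ has vanishing circulation over every loop of the simply connected set $\Omega$, hence $A_a^\alpha=\nabla(\alpha\theta_a)$ there with $\theta_a$ single-valued and smooth, so the gauge $u\mapsto e^{-i\alpha\theta_a}u$ maps $H^{1,a}_0(\Omega)$ isometrically onto $H^1_0(\Omega)$ and conjugates the operator to $-\Delta$, giving $\lambda_k^{(a,\alpha)}=\lambda_k$. For an interior pole $a\to b\in\partial\Omega$, fix a short arc $\gamma_a\subset\overline\Omega$ joining $a$ to $\partial\Omega$ with length tending to $0$, so that $\theta_a$ is single-valued on the simply connected set $\Omega\setminus\gamma_a$, and let $\chi_a$ be a cut-off vanishing near $\gamma_a$, equal to $1$ off a thin shrinking neighbourhood of it, with $\int_\Omega|\nabla\chi_a|^2\to0$ (possible since a shrinking slit has vanishing $2$-capacity). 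Testing the min-max for $\lambda_k^{(a,\alpha)}$ with $\chi_a e^{i\alpha\theta_a}\psi_j$, where $\psi_1,\dots,\psi_k$ are the first Dirichlet eigenfunctions of $-\Delta$ on $\Omega$ (which are bounded), and using $|(i\nabla+A_a^\alpha)(\chi_a e^{i\alpha\theta_a}\psi_j)|=|\nabla(\chi_a\psi_j)|$, yields $\limsup_{a\to b}\lambda_k^{(a,\alpha)}\le\lambda_k$. For the reverse inequality, note that the normalised eigenfunctions $\varphi_j^{(a,\alpha)}$, $j=1,\dots,k$, satisfy $\|\varphi_j^{(a,\alpha)}\|_{L^\infty(\Omega)}\le C$ uniformly in $a$ (by the diamagnetic inequality $|\varphi_j^{(a,\alpha)}|$ is a non-negative subsolution of $-\Delta v=\lambda_k^{(a,\alpha)} v$ with zero boundary data and bounded eigenvalue, so Moser iteration applies), whence $w_j^a:=\chi_a e^{-i\alpha\theta_a}\varphi_j^{(a,\alpha)}\in H^1_0(\Omega)$ form an almost-orthonormal system with $\int_\Omega|\nabla w_j^a|^2\le\lambda_k^{(a,\alpha)}+o(1)$; the min-max for $\lambda_k$ then gives $\liminf_{a\to b}\lambda_k^{(a,\alpha)}\ge\lambda_k$. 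Together with the value on $\partial\Omega$, this produces the continuous extension.

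Finally, for analyticity at a simple eigenvalue I would refine the first construction so that $a\mapsto(T_a,\phi_a)$ is real-analytic near $b$ and admits a holomorphic extension to a complex neighbourhood of $b$; then the sesquilinear form $v\mapsto\int_\Omega\langle M_a(i\nabla+A_b^\alpha)v,(i\nabla+A_b^\alpha)v\rangle\dx$ together with the weighted $L^2$ density $|\det DT_a|$ constitutes a holomorphic family of type (B) on the fixed space $H^{1,b}_0(\Omega)\hookrightarrow L^2(\Omega,\C)$, and Kato--Rellich analytic perturbation theory makes the $k$-th eigenvalue and its spectral projection real-analytic in $a$ near $b$, precisely because $\lambda_k^{(b,\alpha)}$ is simple. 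I expect the hard part to be the lower bound at the boundary: one must certify that the eigenfunctions carry $o(1)$ magnetic energy on the thin region swept by the shrinking slit uniformly in $a$ — exactly where the operator ``forgets'' the circulation and degenerates to the Dirichlet Laplacian — together with the uniform bound on $\|\varphi_j^{(a,\alpha)}\|_\infty$; a subsidiary technicality is arranging the gauge-plus-diffeomorphism data to depend holomorphically on a complexified pole so that Kato's theory applies verbatim.
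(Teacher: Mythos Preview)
Your sketch is sound, but note that Theorem~\ref{thm:reg-a} is not proved in this paper: it is quoted from \cite{BonnaillieNorisNysTerracini2014} and \cite{Lena2015}. What the paper does prove is the closely related Theorem~\ref{thm:reg-a-alpha} (continuity jointly in $(a,\alpha)$, and $C^\infty$ regularity at simple eigenvalues), and the methods there differ from yours in a couple of respects worth recording.

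For interior continuity you pull everything back to the fixed space $H^{1,b}_0(\Omega)$ via a diffeomorphism-plus-gauge and compare Rayleigh quotients; this gives both inequalities in one stroke. The paper's Section~\ref{subsec:cont} instead works directly in the min--max: the $\limsup$ is obtained by testing with $e^{i\alpha(\theta_a-\theta_{a_0})}\eta_a\varphi_j$ (gauge factor times a radial cutoff vanishing near the pole, no diffeomorphism), and the $\liminf$ by extracting weak $H^1_0$-limits of eigenfunctions and passing to the limit in the equation, with Fatou for the Hardy term. Your boundary argument via a shrinking slit, vanishing-capacity cutoff, and uniform $L^\infty$ bounds through the diamagnetic inequality is essentially the scheme of \cite[Section~4]{BonnaillieNorisNysTerracini2014}, which the paper only cites. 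For analyticity, your route through a holomorphic family of type~(B) is the one in \cite{Lena2015} and does yield real-analyticity; the paper's Section~\ref{subsec:reg}, content with $C^\infty$, instead applies the Implicit Function Theorem to $F(a,\alpha,\varphi,\lambda)=\big((i\nabla+A_0^\alpha)^2\varphi+\mathcal L_{(a,\alpha)}\varphi-\lambda\varphi,\ \ldots\big)$ on the fixed space obtained by the same diffeomorphism trick (Section~\ref{sec:modified-operator}). Both regularity arguments rest on the same reduction to a fixed form domain; Kato buys analyticity, the IFT is more elementary but stops at $C^\infty$.
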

This Theorem implies an immediate corollary.
\begin{corollary}(\cite[Corollary 1.2]{BonnaillieNorisNysTerracini2014}) \label{coro:minmax}
Let $\alpha \in \R$ and $\Omega \subset \R^2$ be open, bounded 
and simply connected. Fix any $k \in \N \setminus\{0\}$. The map 
$a \in \Omega \mapsto \lambda_k^{(a,\alpha)}$ has an extremal point 
inside $\Omega$, i.e.\ a minimum or maximum point.
\end{corollary}

\smallskip

The above results hold for any circulation $\alpha$ of the 
magnetic potential. The case $\alpha \in \{\tfrac{1}{2}\} + 
\mathbb{Z}$ presents some special features, see \cite{HHOO1999,
FelliFerreroTerracini2011} and Section~\ref{sec:gauge} for more 
details. Indeed, through a correspondance between the magnetic 
problem and a real Laplacian problem on a double covering manifold, 
the operator \eqref{eq:operator} with $\alpha \in \{\tfrac{1}{2}\} 
+ \mathbb{Z}$ behaves as a \emph{real} operator. In particular, 
the nodal set of the eigenfunctions of operator \eqref{eq:operator}, 
i.e.\ the set of points where they vanish, is made of curves and 
not of isolated points as we could expect for complex valued functions. 
More specifically, the magnetic eigenfunctions always have an \emph{odd} 
number of nodal lines ending at the singular point $a$, and therefore 
at least one. This indeed constitues the main difference with the 
eigenfunctions of the Laplacian. From \cite[Theorem 1.3]{FelliFerreroTerracini2011}, 
\cite[Theorem 2.1]{HHOO1999} (see also \cite[Proposition 2.4]{BonnaillieNorisNysTerracini2014}), 
for any $k \in \mathbb{N} \setminus\{0\}$ and $a \in \Omega$, there 
exist $c_k,\,d_k \in \R$ such that 
\begin{equation} \label{eq:expansion-1/2}
\varphi_k^{(a,\alpha)} (a + r (\cos t,\sin t)) 
= e^{i \frac{t}{2}} \, r^{1/2} 
\left( c_k \cos \frac{t}{2} + d_k \sin \frac{t}{2} \right) 
+ f_k(r,t), 
\end{equation}
where $(x_1, x_2) = a + r (\cos t, \sin t)$, $f_k(r, t) = O(r^{3/2})$ 
as $r \to 0^+$ uniformly with respect to $t \in [0,2\pi]$. We remark that 
the eigenfunction has exactly one nodal line ending at $a$ if and only if 
$c_k^2 + d_k^2 \neq 0$, while it is zero for more than one nodal 
line. Moreover, in the first case, the values of $c_k$ and $d_k$ are 
related to the angle which the nodal line leaves $a$ with (this is detailled 
in Subsection~\ref{subsec:proofmain}).

The study of the exact asymptotic behavior of simple eigenvalues at an 
interior point in case $\alpha \in \{\tfrac{1}{2}\} + \mathbb{Z}$ is 
the aim of the two articles \cite{AbatangeloFelli2015,AbatangeloFelli2016}. 
Therein the authors show that such a behavior depends strongly on the local 
behavior of the corresponding eigenfunction \eqref{eq:expansion-1/2}. 
We then recall two particular results of the aforementioned papers.
\begin{theorem}(\cite[Theorem 1.2]{AbatangeloFelli2015}) \label{thm:critical}
Let $\alpha \in \{\tfrac{1}{2}\} + \mathbb{Z}$ and $\Omega \subset 
\R^2$ be open, bounded and simply connected. Fix any $k \in \N 
\setminus\{0\}$. Let $b \in \Omega$ be such that $\lambda_k^{(b,\alpha)}$ 
is a simple eigenvalue. Then, the map $a \in \Omega \mapsto 
\lambda_k^{(a,\alpha)}$ has a critical point at $b$ if and only 
if the corresponding eigenfunction $\varphi_k^{(b,\alpha)}$ 
has more than one nodal line ending at $b$. In particular, this 
critical point is a saddle point.
\end{theorem}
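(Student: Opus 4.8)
The plan is to derive the statement from a \emph{sharp asymptotic expansion} of the eigenvalue map near $b$, whose leading term is governed precisely by the coefficients $c_k,d_k$ in \eqref{eq:expansion-1/2}. By the gauge correspondence of Section~\ref{sec:gauge} one may assume $\alpha=\tfrac12$, and by translating the domain that $b=0$. Write $\lambda:=\lambda_k^{(0,1/2)}$ and pass to the equivalent real Laplacian problem: set $\psi_0:=e^{-i\theta_0/2}\varphi_k^{(0,1/2)}$, so that $-\Delta\psi_0=\lambda\psi_0$ in $\Omega$, $\psi_0$ vanishes on $\partial\Omega$, $\psi_0$ is real up to a constant phase with monodromy $-1$ around $0$, and by \eqref{eq:expansion-1/2} near $0$ one has $\psi_0(r(\cos t,\sin t))=r^{1/2}\bigl(c_k\cos\tfrac t2+d_k\sin\tfrac t2\bigr)+O(r^{3/2})$; similarly set $\psi_a:=e^{-i\theta_a/2}\varphi_k^{(a,1/2)}$, with $-\Delta\psi_a=\lambda_k^{(a,1/2)}\psi_a$ and monodromy $-1$ around $a$. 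Since $\lambda$ is simple, Theorem~\ref{thm:reg-a} already tells us that $a\mapsto\lambda_k^{(a,1/2)}$ is analytic near $0$, so its increment equals $\nabla_a\lambda_k^{(\cdot,1/2)}(0)\cdot a+\tfrac12\langle H_0a,a\rangle+O(|a|^3)$; everything reduces to computing the gradient and, when it vanishes, the Hessian $H_0$.

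\emph{Upper bound.} I would test the Rayleigh quotient of the pole-at-$a$ problem with the transported function $v_a:=e^{\frac i2(\theta_a-\theta_0)}\psi_0$, which has monodromy $+1$ around $0$ and $-1$ around $a$, hence is admissible. Since $\psi_0$ is essentially real, $|\nabla v_a|^2=|\nabla\psi_0|^2+\tfrac14|\nabla(\theta_a-\theta_0)|^2|\psi_0|^2$ with no cross term, so that integration by parts gives
\[
\int_\Omega|\nabla v_a|^2=\lambda\int_\Omega|\psi_0|^2+\frac14\int_\Omega|\nabla(\theta_a-\theta_0)|^2|\psi_0|^2 .
\]
The only defect is that $v_a\notin H^1$ near $a$ (there $v_a\simeq\psi_0(a)e^{i\theta_a/2}$, a $-1$-monodromy function not vanishing at $a$), repaired in a small fixed disk around $a$ at a cost of order $|\psi_0(a)|^2$. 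Using that $\nabla(\theta_a-\theta_0)=O(|a|/|x|^2)$ for $|a|\ll|x|$ together with the local shape of $\psi_0$ at $0$, a scaling analysis (blow-up at $0$ at scale $|a|$) shows that the correction is $O(|a|)$ when $c_k^2+d_k^2\neq0$, with explicit leading coefficient depending on $a/|a|$ and on $(c_k,d_k)$, whereas when $c_k=d_k=0$ the relevant profile is the degree-$3/2$ term of $\psi_0$ and the correction drops to $O(|a|^2)$.

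\emph{Lower bound and identification of the coefficient.} To turn these into matching two-sided estimates I would work directly with $\psi_a$: by simplicity of $\lambda$ and the convergence results of \cite{BonnaillieNorisNysTerracini2014,Lena2015} one has $\psi_a\to\psi_0$ in $L^2$ and in $H^1$ away from the poles; then, rescaling $\psi_a$ around the moving pole $a$ at scale $|a|$ and invoking an Almgren-type monotonicity formula, as in \cite{AbatangeloFelli2015,AbatangeloFelli2016}, the blow-up family is compact and converges to a homogeneous solution of a limiting Aharonov--Bohm problem on $\R^2$, of degree $\tfrac12$ (resp.\ $\tfrac32$), whose coefficients are exactly $(c_k,d_k)$ (resp.\ the degree-$3/2$ coefficients of $\psi_0$). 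A Pohozaev--Rellich identity written for $\psi_a$ and $\psi_0$ on shrinking annuli, passed to the limit, then expresses $\lambda_k^{(a,1/2)}-\lambda$ at leading order through this blow-up limit and matches the upper bound, yielding
\[
\lambda_k^{(a,1/2)}-\lambda=
\begin{cases}
\;L(c_k,d_k)\cdot a+o(|a|), & c_k^2+d_k^2\neq0,\\[3pt]
\;Q(a)+o(|a|^2), & c_k=d_k=0,
\end{cases}
\]
where $L(c_k,d_k)$ is a linear form that is non-zero precisely when $(c_k,d_k)\neq0$ (its linearity being forced by the analyticity of Theorem~\ref{thm:reg-a}), and $Q$ is a non-trivial homogeneous polynomial of degree $2$ which is \emph{indefinite} — morally the quadratic form $(\tilde c_k,\tilde d_k)\cdot(\mathrm{Re}\,z^2,\mathrm{Im}\,z^2)$ attached to the degree-$3/2$ coefficients $(\tilde c_k,\tilde d_k)$ of $\psi_0$, hence traceless and non-zero.

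\emph{Conclusion and main difficulty.} If $\varphi_k^{(b,\alpha)}$ has exactly one nodal line at $b$, then $c_k^2+d_k^2\neq0$, so $\nabla_a\lambda_k^{(\cdot,\alpha)}(b)=L(c_k,d_k)\neq0$ and $b$ is not a critical point. If $\varphi_k^{(b,\alpha)}$ has more than one nodal line, then $c_k=d_k=0$, the linear term is absent, so $\nabla_a\lambda_k^{(\cdot,\alpha)}(b)=0$, i.e.\ $b$ is critical, and $H_0=2Q$ is indefinite, so $b$ is neither a local maximum nor a local minimum: it is a saddle point. This is also consistent with Corollary~\ref{coro:minmax}: the interior extremum provided there must occur where the present analysis breaks down, that is, at a point where $\lambda_k^{(\cdot,\alpha)}$ is \emph{not} simple, which is exactly the phenomenon studied in the rest of the paper. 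The main obstacle is the sharp lower bound: controlling $\psi_a$ uniformly near the \emph{moving} singular point while it collides with $0$, proving the strong convergence of the rescaled functions, and — above all in the degenerate case — isolating the exact degree-$2$ term and proving that it is sign-changing rather than semidefinite or zero; the bookkeeping is delicate precisely because that term is driven by the \emph{second} term $\sim r^{3/2}$ in the expansion \eqref{eq:expansion-1/2}, invisible in its leading part.
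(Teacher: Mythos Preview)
This theorem is not proved in the present paper: it is quoted verbatim from \cite[Theorem~1.2]{AbatangeloFelli2015} as a known result and used only as motivation in the introduction. There is therefore no ``paper's own proof'' to compare against here.

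That said, your outline is recognisably the strategy of \cite{AbatangeloFelli2015,AbatangeloFelli2016}: an upper bound via a gauged test function in the Rayleigh quotient, and a matching lower bound via blow-up at scale $|a|$ combined with an Almgren-type monotonicity argument to identify the limiting profile. The identification of the linear term with a nonzero multiple of $(c_k,d_k)$ is correct and gives the ``not a critical point'' direction.

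There is, however, a genuine gap in the degenerate case. You implicitly assume that when $c_k=d_k=0$ the eigenfunction vanishes exactly to order $3/2$, so that the eigenvalue increment is governed by a quadratic form $Q$ (your indefinite, trace-free form built from the degree-$3/2$ coefficients). But ``more than one nodal line'' allows any odd $h\ge 3$, and in \cite{AbatangeloFelli2015} the leading term of $\lambda_k^{(a,1/2)}-\lambda$ is of order $|a|^{h}$, a homogeneous harmonic polynomial of degree $h$ in $a$. For $h\ge 5$ the Hessian $H_0$ vanishes identically, so your argument ``$H_0=2Q$ is indefinite'' does not apply; one must instead argue that the degree-$h$ harmonic polynomial changes sign (which it does, being nonconstant harmonic), and interpret ``saddle'' in the appropriate degenerate sense. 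Your sketch would need an inductive or general-$h$ version of the blow-up step, and the Pohozaev--Rellich bookkeeping is driven by the $r^{h/2}$ term of the expansion, not always by $r^{3/2}$.
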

Among many other results we also find the following consequence.
\begin{corollary}(\cite[Corollary 1.5]{AbatangeloFelli2015}) \label{prop:multiple}
Let $\alpha \in \{\tfrac{1}{2}\} + \mathbb{Z}$ and $\Omega \subset \R^2$ 
be open, bounded and simply connected. Fix any $k \in \N \setminus \{0\}$. 
If $b \in \Omega$ is an interior extremal (i.e.\ maximal or minimal) point 
of the map $a \in \Omega \mapsto \lambda_k^{(a,\alpha)}$, then 
$\lambda_k^{(b,\alpha)}$ cannot be a simple eigenvalue.
\end{corollary}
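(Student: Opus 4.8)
The plan is to argue by contradiction, simply combining the analyticity statement of Theorem~\ref{thm:reg-a} with the classification of critical points provided by Theorem~\ref{thm:critical}; no new estimate should be needed. Suppose $b \in \Omega$ is an interior extremal (maximal or minimal) point of the map $a \mapsto \lambda_k^{(a,\alpha)}$, and suppose, contrary to the claim, that $\lambda_k^{(b,\alpha)}$ is simple.

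First I would use simplicity: by Theorem~\ref{thm:reg-a}, the map $a \mapsto \lambda_k^{(a,\alpha)}$ is analytic in a neighborhood of $b$, in particular differentiable there. Since $b$ is a local maximum or minimum of this map, it is necessarily a critical point, i.e.\ $\nabla_a \lambda_k^{(a,\alpha)}|_{a=b} = 0$. Next I would invoke Theorem~\ref{thm:critical} in two ways. On the one hand, having a critical point at $b$ forces the corresponding eigenfunction $\varphi_k^{(b,\alpha)}$ to have more than one nodal line ending at $b$ (equivalently $c_k = d_k = 0$ in the expansion \eqref{eq:expansion-1/2}); if instead it had exactly one nodal line ending at $b$, Theorem~\ref{thm:critical} would say $b$ is not critical, already contradicting extremality. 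On the other hand, once we are in the multiple-nodal-line case, the same theorem asserts that the critical point $b$ is a \emph{saddle} point of $a \mapsto \lambda_k^{(a,\alpha)}$. A saddle point is by definition neither a local maximum nor a local minimum, which contradicts the hypothesis that $b$ is an interior extremal point. Hence $\lambda_k^{(b,\alpha)}$ cannot be simple, and since eigenvalues have finite multiplicity, it must be multiple.

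In this sense there is no genuine obstacle at the level of the corollary: the whole difficulty is packaged inside Theorem~\ref{thm:critical}, namely the fine asymptotic expansion of the eigenvalue variation as the pole moves and the identification of the quadratic form governing the leading term as sign-indefinite. The only point that deserves explicit care in writing is to verify that the two alternatives "$\varphi_k^{(b,\alpha)}$ has exactly one nodal line at $b$" and "it has more than one" are exhaustive and that each of them, via Theorem~\ref{thm:critical}, is incompatible with $b$ being an interior extremum; both cases having been handled above, the proof is complete.
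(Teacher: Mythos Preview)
Your argument is correct and is exactly the intended deduction: assume simplicity, use Theorem~\ref{thm:reg-a} to get differentiability, conclude $b$ is critical, and then invoke both directions of Theorem~\ref{thm:critical} to force $b$ to be a saddle, contradicting extremality. Note that the present paper does not supply its own proof of this corollary; it is quoted from \cite[Corollary~1.5]{AbatangeloFelli2015}, where the reasoning is precisely the one you outline.
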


Therefore, when $\alpha \in \{\tfrac{1}{2}\} + \mathbb{Z}$, the 
combination of Corollary~\ref{coro:minmax}, Theorem~\ref{thm:critical} 
and Corollary~\ref{prop:multiple} implies that there always exist points 
of multiplicity higher than one, corresponding to extremal points of the 
map $a \in \Omega \mapsto \lambda_k^{(a,\alpha)}$.

\smallskip

When the circulation $\alpha$ is neither integer nor half-interger, 
i.e.\ $\alpha \in \R \setminus \tfrac{\mathbb{Z}}{2}$, we find much 
less results in literature. The lack of structure particular to 
$\alpha \in \{\tfrac{1}{2}\} + \mathbb{Z}$ does not allow us to find 
as complete results as in \cite{AbatangeloFelli2015,AbatangeloFelli2016}, 
see e.g.\ \cite{AbatangeloFelliNorisNys2017}. However, in \cite{HHOHOO2000} 
the authors show that if $\alpha \not\in \tfrac{\mathbb{Z}}{2}$, 
the multiplicity of the first eigenvalue of operator \eqref{eq:operator} 
is always one, for any position of the singular pole, while it can 
be two when $\alpha \in \{\tfrac{1}{2}\} + \mathbb{Z}$ for specific position of the pole $a$, as already said.

\medbreak

The above considerations let us think that the half-integer case 
can be viewed as a special case among other circulations. Indeed, 
the operator behaves as a real one, the eigenfunctions present the 
special form \eqref{eq:expansion-1/2}. Moreover, when $\alpha \in 
\{\tfrac{1}{2}\} + \mathbb{Z}$, the multiplicity of the first eigenvalue 
must sometimes be higher than one, which is not the case for $\alpha 
\not\in \tfrac{\mathbb{Z}}{2}$.

As already mentioned, in this present paper we investigate the eigenvalues 
of Aharonov--Bohm operators of multiplicity two. In particular, we want to 
understand \emph{how many} are those points of higher multiplicity, that is 
we want to detect the dimension of the intersection manifold (once proved it 
is a manifold, see Section~\ref{sec:abstract}) between the graphs of two 
subsequent eigenvalues. 

Since we want to analyse the multiplicity of the eigenvalues with respect to 
$(a, \alpha) \in \Omega \times \R$, we first need a stronger regularity 
result for the map $(a,\alpha) \mapsto \lambda_k^{(a,\alpha)}$ involving also the circulation $\alpha$, and not only $a \in 
\Omega$ as in Theorem~\ref{thm:reg-a}. 
\begin{theorem} \label{thm:reg-a-alpha}
Let $\Omega \subset \R^2$ be a bounded, open and simply connected 
domain such that $0 \in \Omega$. Fix $k \in \N \setminus\{0\}$. Then,
\[
\text{The map } (a,\alpha) \mapsto \lambda_k^{(a,\alpha)} \text{ is continuous in } 
\Omega \times ( \R \setminus \Z ).
\]
Moreover let $\alpha_0 \in \R \setminus \Z$. If $\lambda_k^{(0,\alpha_0)}$ 
is a simple eigenvalue, then
\[
\text{the map } (a, \alpha) \mapsto \lambda_k^{(a,\alpha)} \text{ is locally } 
C^\infty \text{ in a neighborhood of } (0,\alpha_0).
\]
\end{theorem}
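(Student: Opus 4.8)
The plan is to reduce the problem to a fixed-domain, fixed-operator situation and then apply analytic (or $C^\infty$) perturbation theory for self-adjoint operators with compact resolvent. The obstacle is that the operator $(i\nabla + A_a^\alpha)^2$ has a singularity whose \emph{location} depends on $a$ and whose \emph{strength} depends on $\alpha$; so neither the domain of the form nor the form itself depends smoothly on the parameters in any naive sense. To fix this I would first translate the singularity to the origin (replacing $\Omega$ by $\Omega - a$, which varies smoothly with $a$ and, since $0 \in \Omega$, contains a fixed ball for $a$ small), and then conjugate away the $a$- and $\alpha$-dependence of the singular potential by a phase transformation. Concretely, writing $\theta_a(x)$ for the polar angle of $x - a$, one has $A_a^\alpha = \alpha \nabla \theta_a$ locally, so $e^{i\alpha\theta_a} (i\nabla + A_a^\alpha)^2 e^{-i\alpha\theta_a} = -\Delta$ away from the pole; but $e^{-i\alpha\theta_a}$ is multivalued when $\alpha \notin \Z$, so this cannot be done globally on a simply connected domain. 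The standard device (used in \cite{HHOO1999,FelliFerreroTerracini2011} and recalled in Section~\ref{sec:gauge}) is instead to cut the domain along a ray from the pole and impose a phase jump across the cut, or equivalently to work on the double cover; either way one arrives at a family of operators on a \emph{fixed} function space whose coefficients depend on $(a,\alpha)$ in a controlled way.

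**The key steps, in order, are as follows.** First, set up the unitary equivalence: fix a reference configuration near $(0,\alpha_0)$, translate so the pole sits at $0$, and gauge so that the operator becomes $(i\nabla + \alpha\, \omega_a)^2$ where $\omega_a$ is a closed (but not exact) one-form equal to $d\theta$ near $0$ and \emph{smooth} in $a$ away from $0$; the $\alpha$-dependence is now just the scalar multiplier in front of a fixed-type singular form. Second, realize all these operators on one Hilbert space. The natural choice is the magnetic form domain $H^{1,a}_{0,\alpha}(\Omega)$; its $a$-dependence is handled by the translation (a smooth diffeomorphism of domains pulls back the form domain isometrically up to a smooth change of metric), and its $\alpha$-dependence is controlled because, as recalled after \eqref{eq:expansion-1/2}, the form domain for any non-integer $\alpha$ embeds compactly in $L^2$ and the forms $q_{a,\alpha}(u) = \int_\Omega |(i\nabla+A_a^\alpha)u|^2$ are, after the gauge change, a smooth family of closed sesquilinear forms with common form domain (uniformly in a neighborhood of $(0,\alpha_0)$, bounded below). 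Third, invoke Kato's theory: a $C^\infty$ (indeed real-analytic in suitable directions) family of self-adjoint operators with compact resolvent, bounded below, has the property that a \emph{simple} eigenvalue — here $\lambda_k^{(0,\alpha_0)}$, which by hypothesis is isolated and simple — together with its eigenprojection depends $C^\infty$ on the parameters in a neighborhood; extracting the eigenvalue from the projection (e.g. $\lambda_k^{(a,\alpha)} = \operatorname{tr}\big( (i\nabla+A_a^\alpha)^2 \,\Pi_{a,\alpha}\big)$ with $\Pi_{a,\alpha}$ the spectral projection on a fixed small circle) gives the claimed local $C^\infty$ regularity. The continuity statement on all of $\Omega \times (\R \setminus \Z)$ follows from the same construction but only needs norm-resolvent continuity of the family, which is weaker and does not require simplicity — it is essentially the $\alpha$-uniform version of the continuity already established in Theorem~\ref{thm:reg-a}, combined with continuity in $\alpha$ of the multiplier $\alpha \omega_a$ for $\alpha$ staying away from $\Z$ (the restriction $\alpha \notin \Z$ is exactly what keeps the problem genuinely magnetic and the form domain from jumping).

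**The main obstacle** is the second step: exhibiting a \emph{single} Hilbert space and a smooth family of forms, i.e. proving that the magnetic Sobolev spaces $H^{1,a}_{0,\alpha}$ can be identified across nearby $(a,\alpha)$ in a way compatible with differentiation. The subtlety is the behavior near the pole, where functions in the form domain behave like $r^{|\alpha - \lfloor\alpha\rfloor|}$ roughly, so the domain really does move with $\alpha$; the resolution is that one does not need the form \emph{domains} to be literally equal, only the resolvents to be comparable, so I would work with the resolvents directly — write $(i\nabla+A_a^\alpha)^2 + 1$, transport everything to the fixed domain $B \Subset \Omega - a_0$ via the translation and a fixed smooth cutoff, and show the resolvents form a $C^\infty$ family of bounded operators on $L^2(\Omega)$ using the resolvent identity plus uniform a priori estimates (Hardy-type inequalities for Aharonov–Bohm potentials, which are uniform for $\alpha$ in a compact subset of $\R\setminus\Z$). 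Once the resolvent family is $C^\infty$, Riesz-projection/trace formulas finish the argument mechanically. I expect the continuity part to be routine given Theorem~\ref{thm:reg-a}, and the $C^\infty$ part to hinge entirely on these uniform magnetic Hardy inequalities and the gauge construction of Section~\ref{sec:gauge}.
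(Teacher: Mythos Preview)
Your strategy is sound and lands in the same place as the paper, but the execution differs in two concrete respects worth noting.

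First, the reduction to a fixed space. You propose translating $\Omega$ to $\Omega-a$ and then patching; but then the domain itself moves with $a$, and you end up needing a second diffeomorphism to pull everything back to a fixed set. The paper short-circuits this by using from the outset a \emph{local} diffeomorphism $\Phi_a(x)=x+a\,\xi(x)$ with $\xi$ a cutoff supported in a small ball around $0$: this sends $0$ to $a$, equals the identity near $\partial\Omega$, and hence maps $\Omega$ to itself. Pulling back by $\Phi_a$ turns $(i\nabla+A_a^\alpha)^2$ into $(i\nabla+A_0^\alpha)^2+\mathcal L_{(a,\alpha)}$ acting on the fixed space $H^{1,0}_0(\Omega,\C)$, with $\mathcal L_{(a,\alpha)}$ a lower-order perturbation whose coefficients are explicitly $C^\infty$ in $(a,\alpha)$ and vanish near the pole. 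No gauge cut or double cover is needed; the singular part $A_0^\alpha$ is simply left alone, and the $\alpha$-dependence enters only through the scalar factor in front of it.

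Second, the perturbation argument itself. For $C^\infty$ regularity at a simple eigenvalue the paper does not invoke Kato or Riesz projections; it applies the Implicit Function Theorem directly to the map $F(a,\alpha,\varphi,\lambda)=\big((i\nabla+A_0^\alpha)^2\varphi+\mathcal L_{(a,\alpha)}\varphi-\lambda\varphi,\ \text{normalization}\big)$, checking invertibility of $d_{(\varphi,\lambda)}F$ via the Fredholm alternative. For continuity the paper does not argue via norm-resolvent convergence either: it uses the min-max characterization directly, building competitors for the $\limsup$ bound with the explicit test functions $e^{i\alpha(\theta_a-\theta_{a_0})}\eta_a\varphi_j$ (a phase twist times a logarithmic cutoff around the pole) and extracting weak limits of eigenfunctions for the $\liminf$ bound. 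Your Kato/resolvent route would also work, but the paper's hands-on computation has the advantage that the explicit form of $\mathcal L_{(a,\alpha)}$ and its derivative $\mathcal L'(0,\tfrac12)$ are exactly what is needed later to verify the transversality hypothesis in the proof of Theorem~\ref{t:main}.
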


Concerning multiple eigenvalues, our main result is the following.

\begin{theorem} \label{t:main}
Let $\Omega \subset \R^2$ be a bounded, open, simply connected Lipschitz 
domain such that $0 \in \Omega$. Let $\alpha_0 \in \{\tfrac{1}{2}\} 
+ \mathbb{Z}$. Let $n_0 \geq 1$ be such that the $n_0$-th eigenvalue 
$\lambda := \lambda_{n_0}^{(0,\alpha_0)}$ of $(i\nabla + A_0^{\alpha_0})^2$ 
with Dirichlet boundary conditions on $\partial \Omega$ has multiplicity 
two. Let $\varphi_1$ and $\varphi_2$ be two orthonormal in $L^2(\Omega,\C)$ 
and linearly independent eigenfunctions corresponding to $\lambda$. Let 
$c_k, \, d_k \in \R$ be the coefficients in the expansions of $\varphi_k$, 
$k = 1,2$, in \eqref{eq:expansion-1/2}. If $\varphi_1$ and $\varphi_2$ 
satisfy both the following
\begin{itemize}
\item[$(i)$] $c_k^2 + d_k^2 \neq 0$ for $k=1,2$;
\item[$(ii)$] there does not exist $\gamma \in \R$ such that $(c_1, d_1) 
= \gamma (c_2,d_2)$;
\item[$(iii)$] $\int_{\Omega} (i \nabla + A_0^{\alpha_0}) \varphi_1 
\cdot A_0^{\alpha_0} \overline{\varphi_2} \neq 0$; 
\end{itemize}
then there exists a neighborhood $U \subset \Omega \times \R$ of 
$\left(0,\alpha_0 \right)$ such that the set 
\begin{align*}
\{(a,\alpha) \in U \, : \, (i\nabla + A_a^\alpha)^2 \text{ admits an eigenvalue of 
multiplicity two close to } \lambda \} 
= \left\{ (0,\alpha_0 ) \right\}. 
\end{align*}
\end{theorem}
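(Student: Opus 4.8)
The plan is to reduce, by a Lyapunov--Schmidt / Kato-type procedure, the persistence of a double eigenvalue near $\lambda$ to a system of three real equations in the three real parameters $(a_1,a_2,\alpha)$, and then to invoke the inverse function theorem, checking that hypotheses $(i)$--$(iii)$ are exactly what makes the relevant $3\times 3$ Jacobian nonsingular.

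First I would set up the reduced matrix. Since $\lambda=\lambda_{n_0}^{(0,\alpha_0)}$ has multiplicity two, and therefore $\lambda_{n_0-1}^{(0,\alpha_0)}<\lambda<\lambda_{n_0+2}^{(0,\alpha_0)}$, the continuity part of Theorem~\ref{thm:reg-a-alpha} shows that for $(a,\alpha)$ in a small neighbourhood $U$ of $(0,\alpha_0)$ the only eigenvalues of $(i\nabla+A_a^\alpha)^2$ near $\lambda$ are $\lambda_{n_0}^{(a,\alpha)}$ and $\lambda_{n_0+1}^{(a,\alpha)}$; in particular any double eigenvalue near $\lambda$ must coincide with $\lambda_{n_0}^{(a,\alpha)}=\lambda_{n_0+1}^{(a,\alpha)}$. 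Let $P_{(a,\alpha)}$ be the Riesz spectral projection onto the corresponding two-dimensional invariant subspace and transport it back to $\mathrm{span}\{\varphi_1,\varphi_2\}$ by a continuously varying unitary, obtaining a Hermitian matrix $M(a,\alpha)\in\C^{2\times 2}$ with eigenvalues $\lambda_{n_0}^{(a,\alpha)},\lambda_{n_0+1}^{(a,\alpha)}$ and $M(0,\alpha_0)=\lambda\,\mathrm{Id}$. A double eigenvalue near $\lambda$ occurs at $(a,\alpha)$ if and only if $M(a,\alpha)$ is scalar, i.e.\ its traceless part $N(a,\alpha):=M(a,\alpha)-\tfrac12(\mathrm{tr}\,M(a,\alpha))\,\mathrm{Id}$ vanishes. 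Identifying the real vector space of traceless Hermitian $2\times 2$ matrices with $\R^3$, the statement reduces to: $(0,\alpha_0)$ is the unique zero of $N$ in $U$. Granting that $N$ is of class $C^1$ near $(0,\alpha_0)$ — which is where the abstract framework of Section~\ref{sec:abstract} together with Theorem~\ref{thm:reg-a-alpha} enters — it suffices, by the inverse function theorem, to show that $dN(0,\alpha_0):\R^3\to\R^3$ is invertible.

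The core is then the computation of the three columns of $dN(0,\alpha_0)$, and here I would choose, as we may, $\varphi_1,\varphi_2$ adapted to the real structure at $(0,\alpha_0)$. For the two $a$-directions: since $\alpha_0$ is half-integer, $(i\nabla+A_a^{\alpha_0})^2$ behaves as a real operator for every pole position $a$, so along the slice $\{\alpha=\alpha_0\}$ the matrix $M(\cdot,\alpha_0)$ is real symmetric in the adapted frame (which the reality of the coefficients $c_k,d_k$ in \eqref{eq:expansion-1/2} encodes), hence $\partial_{a_1}N(0,\alpha_0)$ and $\partial_{a_2}N(0,\alpha_0)$ lie in the two-dimensional subspace of \emph{traceless real symmetric} matrices. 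Using the blow-up / Almgren-frequency analysis of \cite{AbatangeloFelli2015,AbatangeloFelli2016} — which for a simple eigenvalue yields the Hadamard-type leading term of $\lambda_k^{(a,\alpha_0)}-\lambda$ under a displacement of the pole, governed precisely by the expansion \eqref{eq:expansion-1/2} — one checks that the entries $\partial_{a_i}M(0,\alpha_0)_{jk}$ are explicit quadratic expressions in $(c_j,d_j)$ and $(c_k,d_k)$; setting $z_k:=c_k+id_k$, a short computation gives a $2\times 2$ determinant proportional to $(|z_1|^2+|z_2|^2)\,\Im(\overline{z_1}z_2)$, which is nonzero precisely when each $(c_k,d_k)\neq(0,0)$ and $(c_1,d_1),(c_2,d_2)$ are not proportional, i.e.\ precisely under $(i)$ and $(ii)$. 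For the $\alpha$-direction: writing $A_0^\alpha=\alpha A_0^1$ and using $\mathrm{div}\,A_0^1=0$ off the pole, one has $\partial_\alpha(i\nabla+A_0^\alpha)^2|_{\alpha_0}=2i\,A_0^1\!\cdot\!\nabla+2\alpha_0|A_0^1|^2$, whence, with a unitary frame, $\partial_\alpha M(0,\alpha_0)_{jk}=\tfrac{2}{\alpha_0}\int_\Omega(i\nabla+A_0^{\alpha_0})\varphi_j\cdot A_0^{\alpha_0}\,\overline{\varphi_k}$. The symmetry $\lambda_k^{(0,\alpha)}=\lambda_k^{(0,1-\alpha)}$ (gauge invariance plus complex conjugation) forces $\partial_\alpha M(0,\alpha_0)$ to be off-diagonal, and the reality of the operator (equivalently, the fact that $\partial_\alpha(i\nabla+A_0^\alpha)^2|_{\alpha_0}$ anticommutes with the antiunitary symmetry of $(i\nabla+A_0^{\alpha_0})^2$) makes its off-diagonal entry purely imaginary; therefore $\partial_\alpha N(0,\alpha_0)$ is a multiple of the remaining Pauli direction, transverse to the plane just described, and it is nonzero exactly when $\int_\Omega(i\nabla+A_0^{\alpha_0})\varphi_1\cdot A_0^{\alpha_0}\,\overline{\varphi_2}\neq0$, i.e.\ precisely under $(iii)$. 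The three columns then span $\R^3$, the inverse function theorem applies, and shrinking $U$ gives the conclusion.

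The main obstacle is the $a$-dependence. The potential $A_a^\alpha$ is singular exactly at the moving pole, so the family $(i\nabla+A_a^\alpha)^2$ is not analytic — not even norm-Lipschitz — in $a$ at $a=0$, and standard analytic perturbation theory does not apply to that variable: one cannot differentiate $M$ in $a$ naively. Establishing that $N$ is $C^1$ in $a$ near $(0,\alpha_0)$ and extracting $\partial_{a_i}M(0,\alpha_0)$ in terms of the coefficients $c_k,d_k$ of \eqref{eq:expansion-1/2} requires pushing the gauge reduction and the Almgren-monotonicity / blow-up machinery of \cite{AbatangeloFelli2015,AbatangeloFelli2016} from the case of a \emph{simple} eigenvalue to the whole two-dimensional eigenspace, controlling in particular the off-diagonal interaction integrals $\int(\,\cdot\,)\varphi_j\,\overline{\varphi_k}$ with $j\neq k$ and verifying that the contributions of the moving frame are scalar, hence invisible to $N$. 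By comparison, the $\alpha$-direction is a genuine analytic perturbation and is comparatively routine.
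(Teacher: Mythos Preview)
Your reduction is correct and, modulo packaging, coincides with the paper's: the Lupo--Micheletti transversality condition $\mathrm{Im}\,F + [I] = L_h(\R^2,\R^2)$ of Theorem~\ref{theorem:abstract-complex} is exactly your requirement that $dN(0,\alpha_0):\R^3\to\R^3$ be invertible, and your splitting into the real-symmetric plane (the $a$-derivatives) plus the imaginary-antisymmetric direction (the $\alpha$-derivative) matches Lemmas~\ref{lemma:symmetry} and~\ref{lemma:antisymmetry}. Your determinant $(|z_1|^2+|z_2|^2)\,\Im(\overline{z_1}z_2)$ with $z_k=c_k+id_k$ is the paper's $m_1m_2(m_1^2+m_2^2)\sin\frac{\alpha_1-\alpha_2}{2}$ in disguise, and your anticommutation argument for the $\alpha$-direction is precisely Lemma~\ref{lemma:antisymmetry} (that argument in fact yields \emph{both} the vanishing of the diagonal and the pure imaginarity of the off-diagonal; the separate appeal to $\lambda_k^{(0,\alpha)}=\lambda_k^{(0,1-\alpha)}$ is not needed and is a bit delicate at a crossing).

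Where you genuinely diverge from the paper is in handling the $a$-regularity, which you correctly flag as the main obstacle. You propose to push the Almgren/blow-up machinery of \cite{AbatangeloFelli2015,AbatangeloFelli2016} from simple eigenvalues to the full two-dimensional eigenspace, including the off-diagonal interaction terms. The paper takes a completely different and far more economical route: it introduces the local diffeomorphism $\Phi_a(x)=x+a\,\xi(x)$ (Section~\ref{sec:modified-operator}) that drags the pole from $a$ back to $0$, and conjugates $(i\nabla+A_a^\alpha)^2$ to an operator $G_{(a,\alpha)}$ acting on the \emph{fixed} space $H^{1,0}_0(\Omega,\C)$. This family is then manifestly $C^\infty$ in $(a,\alpha)$ (Lemma~\ref{lemma:operator-G-a}), because the only potentially singular coefficient, $A_a^\alpha\circ\Phi_a - A_0^\alpha$, extends smoothly across $0$. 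The $a$-derivatives of the matrix entries become the boundary integrals of Lemma~\ref{lemma:first-order-G-a}, and Lemma~\ref{lemma:symmetry} converts these into the explicit bilinear expressions in $(c_j,d_j)$ by integrating by parts on $\Omega\setminus D_\delta(0)$ and reading off the contribution of the inner boundary from the expansion~\eqref{eq:expansion-1/2}. No monotonicity or frequency function enters. Your blow-up route may well be made to work, but it is substantially heavier, and you would still need honest $C^1$ dependence of the Riesz projection --- not just leading-order asymptotics of eigenvalues --- before the inverse function theorem applies; the $\Phi_a$-trick delivers this for free.
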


First of all, we make some comments on the conditions appearing in 
Theorem~\ref{t:main}. Condition $(i)$ means that 
both $\varphi_1$ and $\varphi_2$ have a unique nodal line ending at $0$. 
Condition $(ii)$ is related to the relative angle between the two 
nodal lines of $\varphi_1$ and $\varphi_2$: it means that they cannot 
have their nodal line leaving $0$ in a tangential way (see Section~\ref{sec:proof}
for more details). Conditions $(i)$--$(ii)$ can be rephrased 
\begin{itemize}
\item[$(i')$] 
for any $L^2(\Omega,\C)$-orthonormal system of eigenfunctions, the eigenspace 
related to $\lambda$ does not contain any eigenfunction with more than one nodal 
line ending at $0$.
\end{itemize}
Indeed, if condition $(ii)$ is not satisfied, i.e.\ if there exists some 
$\gamma \in \R$ with $(c_1, d_1) = \gamma (c_2,d_2)$, we can  consider the 
linear combinations $\psi_1 = -\gamma \varphi_1 + \varphi_2$ and $\psi_2 = 
\varphi_1 + \gamma \varphi_2$ (up to normalization): they are eigenfunctions 
associated to $\lambda$ and $\psi_1$ has a vanishing first order term in 
\eqref{eq:expansion-1/2}, i.e. strictly more than one nodal line at $0$. 

Condition $(iii)$ is more implicit, since it seems not to be immediately 
related to local properties of the eigenfunctions. Nevertheless, the present 
authors are currently investigating the special case when $\Omega$ is the unit 
disk, in the flavor of \cite{BonnaillieHelffer2013}. This will be an example 
where the assumptions of Theorem \ref{t:main} are sometimes satisfied.

\medbreak

The proof of Theorem~\ref{t:main} relies on two main ingredients. 
First it uses an abstract result drown by \cite{LupoMicheletti1993}. 
By transversality methods, in \cite{LupoMicheletti1993}, the authors 
consider a family of self-adjoint compact operators $\mathcal T_b$ 
parametrized on a Banach space $B$. They provide a sufficient 
condition such that when $\lambda$ is an eigenvalue of $\mathcal T_0$ 
of a given multiplicity $\mu>1$, the set of $b$'s in a small neighborhood 
of $0$ in $B$ for which $\mathcal T_b$ admits an eigenvalue $\lambda_b$ 
(near $\lambda_0$) of the same multiplicity is a manifold in $B$. They 
are also able to compute exactly the codimension in $B$ of this manifold. 
To our aim, a complex version of the result in \cite{LupoMicheletti1993}
will be needed. It is provided in Section~\ref{sec:abstract}.

In order to apply this abstract result, we need to work with \emph{fixed} 
functional spaces, i.e.\; depending neither on the position of the pole 
$a$ nor on the circulation $\alpha$. In this way, the family $\mathcal T_b$ 
can be defined on the very same functions space. Since in general a suitable 
variational setting for this kind of operators depends strongly on the position 
of the pole $a$ (see Section~\ref{sec:preli}), we introduce a suitable family 
of domain perturbations parametrized by $a$ when it is sufficiently close to 
$0$ in $\Omega$. Such perturbations move the pole $a$ into the fixed pole $0$ 
and they produce isomorphisms between the functions spaces dependent on $a$ and a 
fixed one. These domain perturbations will transform the operator 
$(i \nabla + A_a^{\alpha})^2$ (as well as its inverse operator) into a different 
but spectrally equivalent operator, which turns to be the sum 
$(i \nabla + A_0^{\sfrac{1}{2}})^2$ plus a small perturbation with respect to 
$(a, \alpha)$, for $(a,\alpha)$ close to $(0,1/2)$ (see Sections~\ref{sec:modified-operator} 
and \ref{sec:equivalent-operator}).

The second part of the proof relies on the explicit evaluation 
of the perturbation of those operators, applied to eigenfunctions 
of the unperturbed operator $(i \nabla + A_0^{\sfrac{1}{2}})^2$
in order to apply the aforementioned abstract complex result.

\subsection{Motivations}

In order to better understand the general problem and particularly the conditions $(i)$ and 
$(ii)$ in Theorem \ref{t:main}, as well as to support our result, we introduce here some 
numerical simultions by Virginie Bonnaillie--Noël, whom the present 
authors are in debt to. The subsequent simulations are partially 
shown in \cite{BonnaillieHelffer2011,BonnaillieNorisNysTerracini2014} 
and they concern the case when the domain $\Omega$ is the angular sector
\[
\Sigma_{\pi/4} = \left\{ (x_1, x_2) \in \mathbb{R}^2 \, :  \, x_1 > 0, \, 
|x_2| < x_1 \tan \frac{\pi}{8} \, x_1^2 + x_2^2 < 1 \right\}, 
\]
and the square. We also mention the work \cite{BonnaillieHelffer2013} 
treating the case of the unit disk. We remark that all those simulations 
are made in the case of half-integer circulation $\alpha \in \{\tfrac{1}{2}\} 
+ \mathbb{Z}$, since in this case numerical computations can be done for 
eigenfunctions which are in fact real valued functions.

Figure~\ref{fig:sector-square} represents the nine firth magnetic 
eigenvalues for the angular sector and the square, respectively 
when the magnetic pole is moving on the symmetry axis of the sector, 
on the diagonal and on the mediane of the square. We remark that 
therein the points of higher multiplicity correspond to the meeting 
points between the coloured lines, and each coloured line represents 
a different eigenvalue. Next, Figures~\ref{fig:3D-sec} and \ref{fig:3D-square} 
give a three-dimensional vision of the first three 
magnetic eigenvalues in the case of the angular sector, and of the 
four first eigenvalues in the square, respectively.

\begin{figure} 
 \begin{center}
  \subfigure[$a$ moving on the symmetry axis of the angular sector]
  {\includegraphics[scale=0.42]{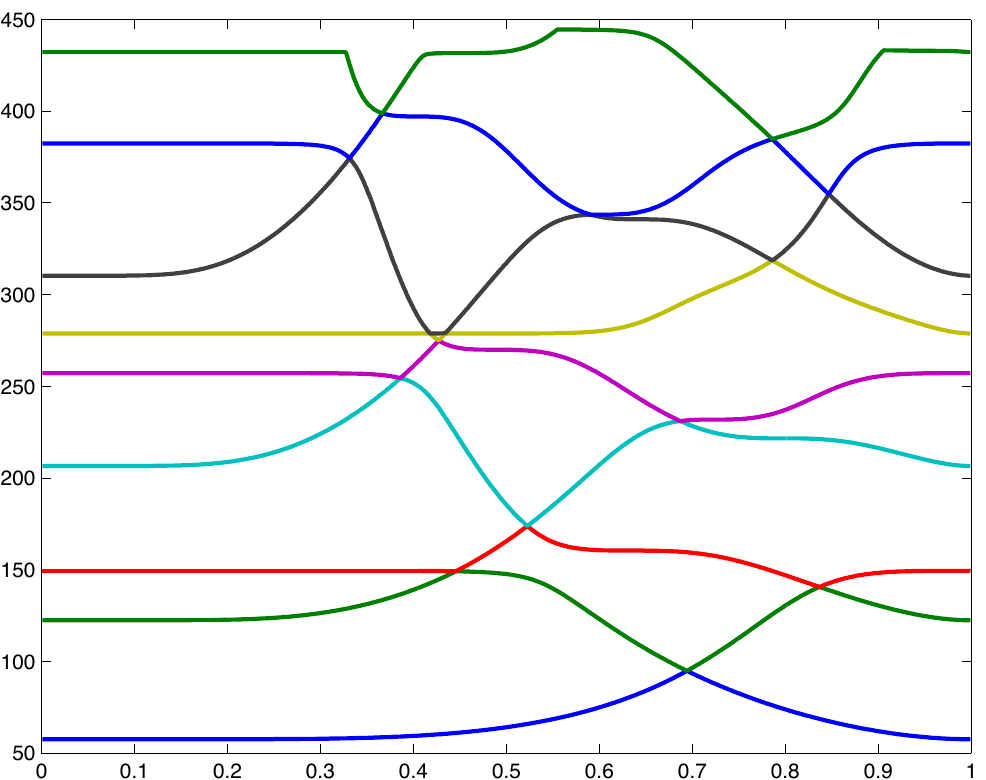}} \\
  \subfigure[$a$ moving on the diagonal of the square]
  {\includegraphics[scale=0.42]{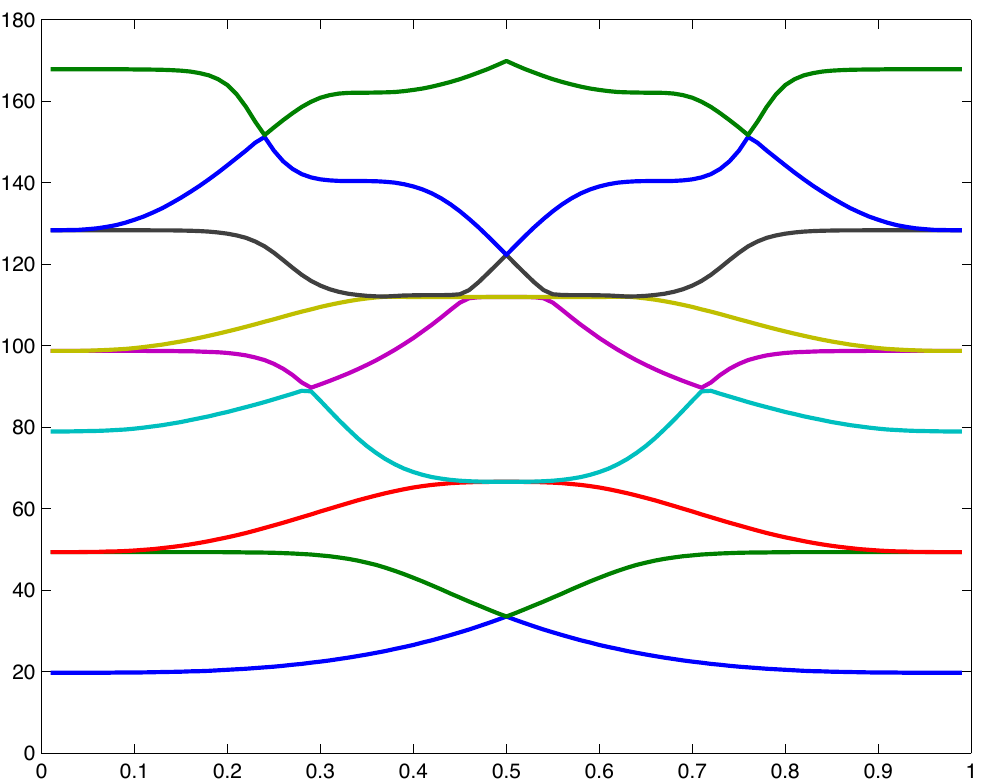}}
  \subfigure[$a$ moving on the mediane of the square]
  {\includegraphics[scale=0.42]{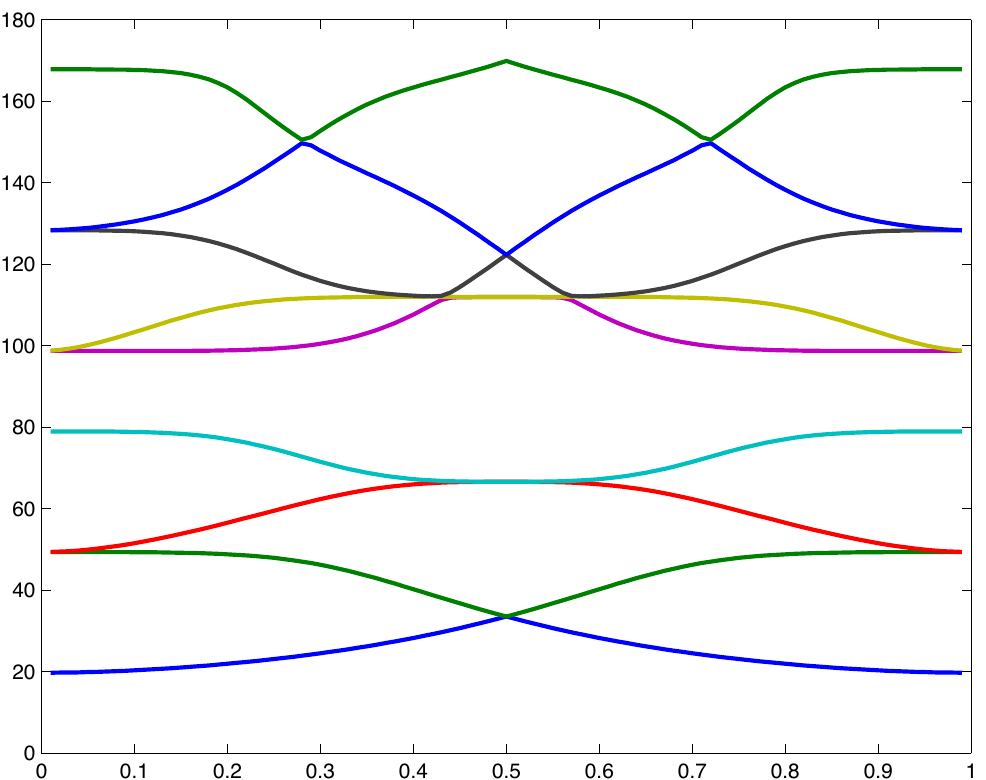}}
  \caption{$a \mapsto \lambda_k^{(a,\frac{1}{2})}$, $k = 1,\ldots,9$, 
  in the angular sector and the square}
  \label{fig:sector-square}
 \end{center}
\end{figure}
\begin{figure}[!ht] 
 \begin{center}
  \subfigure[$a \mapsto \lambda_1^{(a,\frac{1}{2})}$]
  {\includegraphics[scale=0.27]{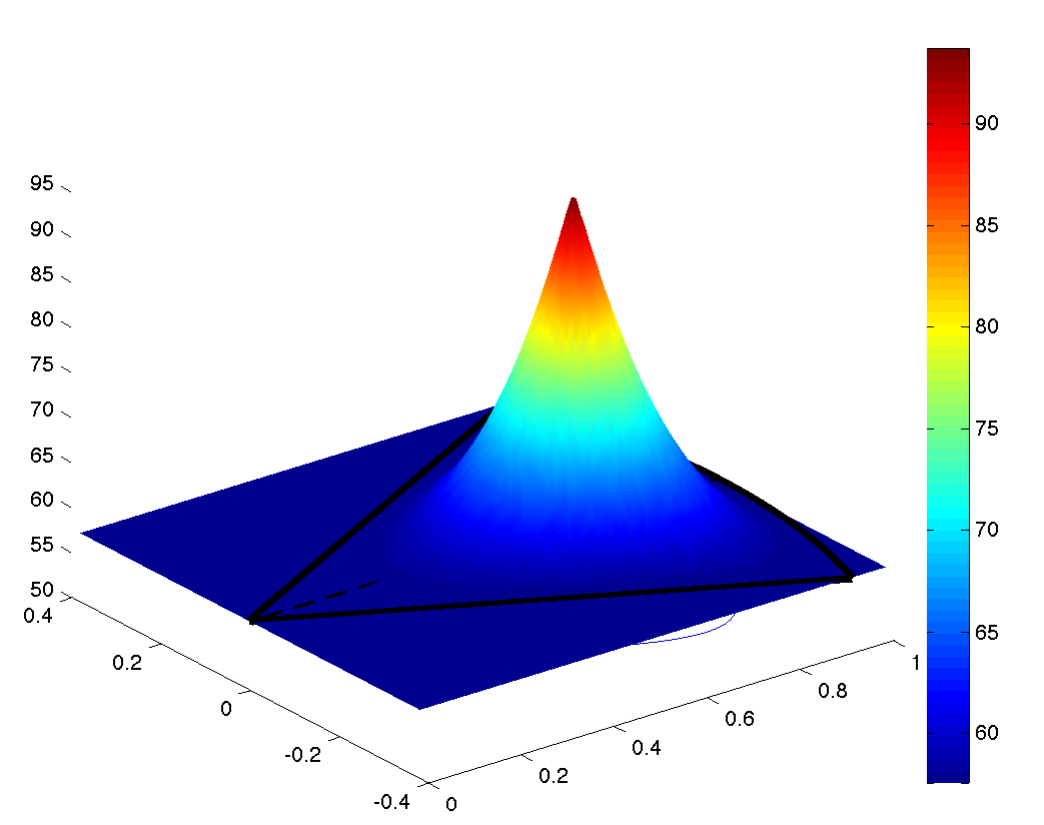}}
  \subfigure[$a \mapsto \lambda_2^{(a,\frac{1}{2})}$]
  {\includegraphics[scale=0.27]{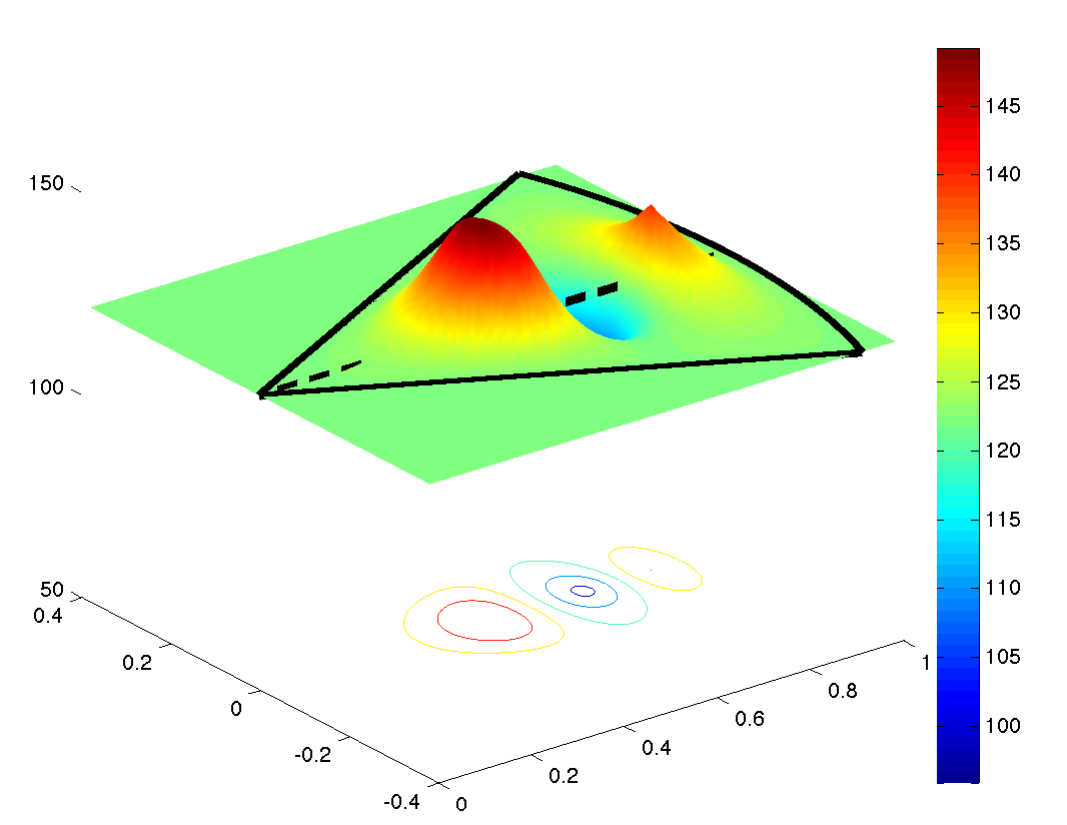}}
  \subfigure[$a \mapsto \lambda_3^{(a,\frac{1}{2})}$]
  {\includegraphics[scale=0.27]{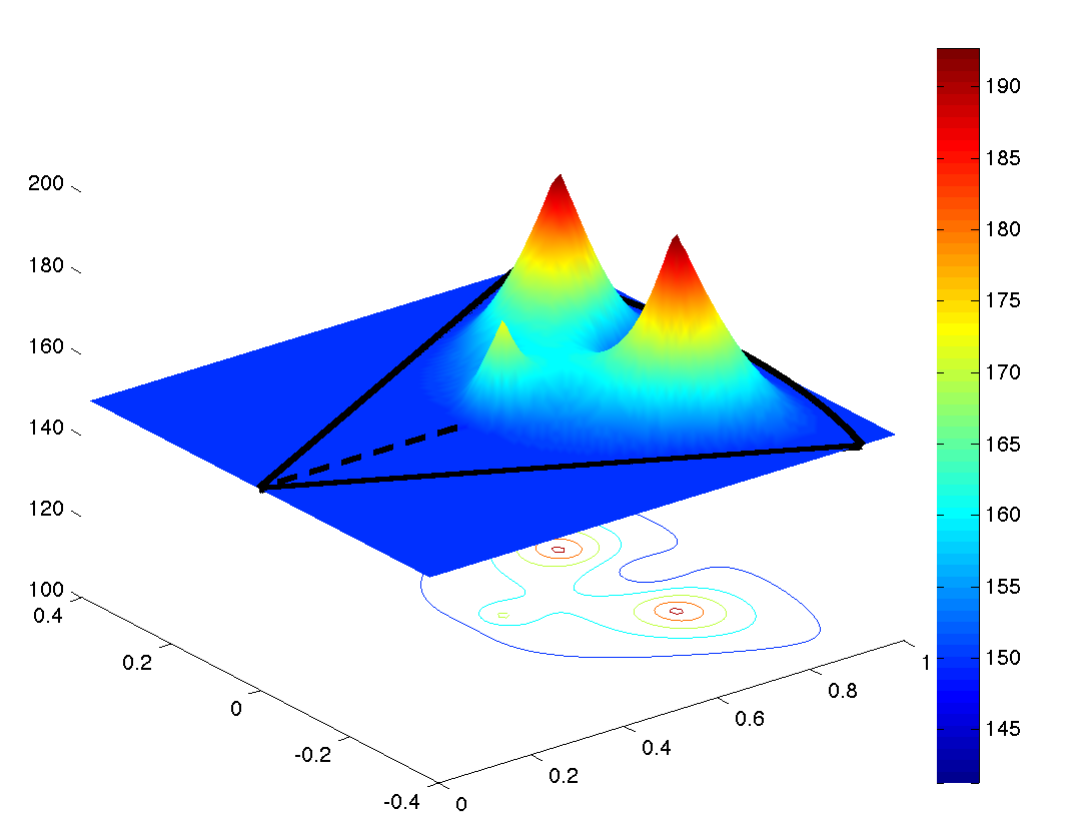}}
  \caption{Three-dimensional vision of $\lambda_k^{(a,\frac{1}{2})}$, 
  $k = 1, \ldots, 3$, for the angular sector}
  \label{fig:3D-sec}
 \end{center}
\end{figure}
\begin{figure} 
 \begin{center}
  \subfigure[$a \mapsto \lambda_1^{(a,\frac{1}{2})}$]
  {\includegraphics[scale=0.25]{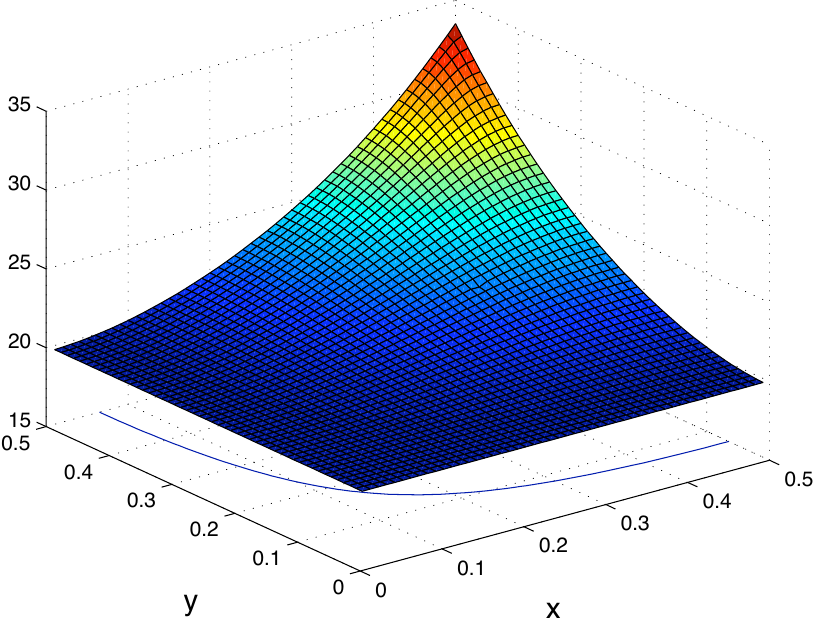}}
  \subfigure[$a \mapsto \lambda_2^{(a,\frac{1}{2})}$]
  {\includegraphics[scale=0.25]{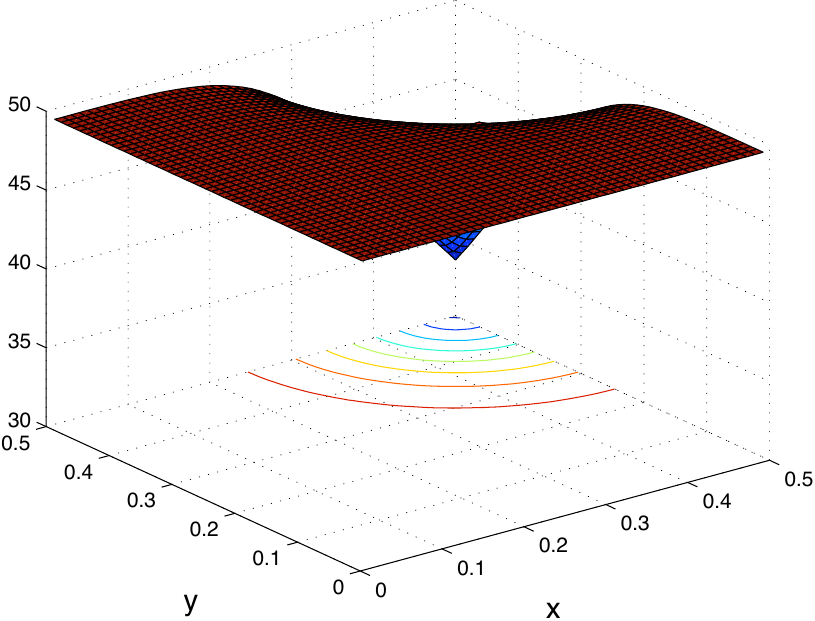}}
  \subfigure[$a \mapsto \lambda_3^{(a,\frac{1}{2})}$]
  {\includegraphics[scale=0.25]{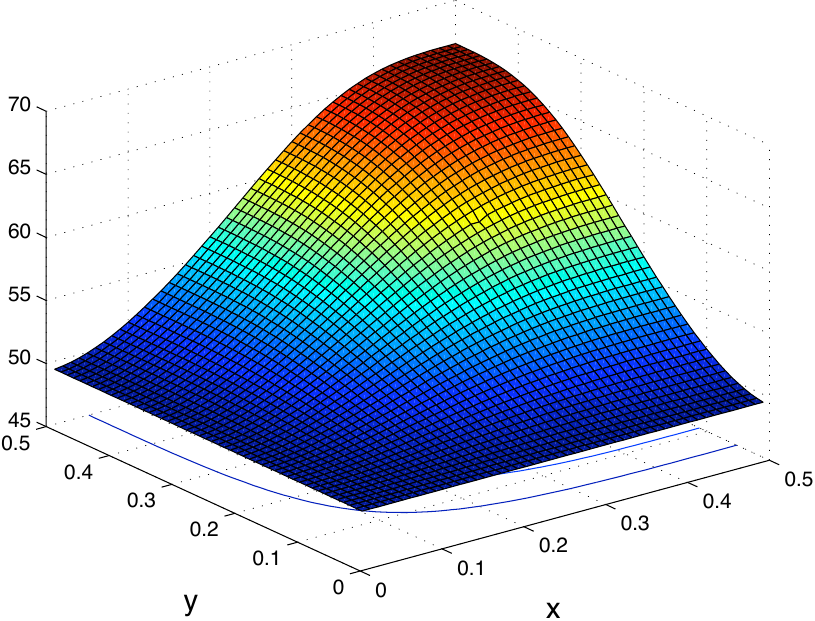}}
  \subfigure[$a \mapsto \lambda_4^{(a,\frac{1}{2})}$]
  {\includegraphics[scale=0.25]{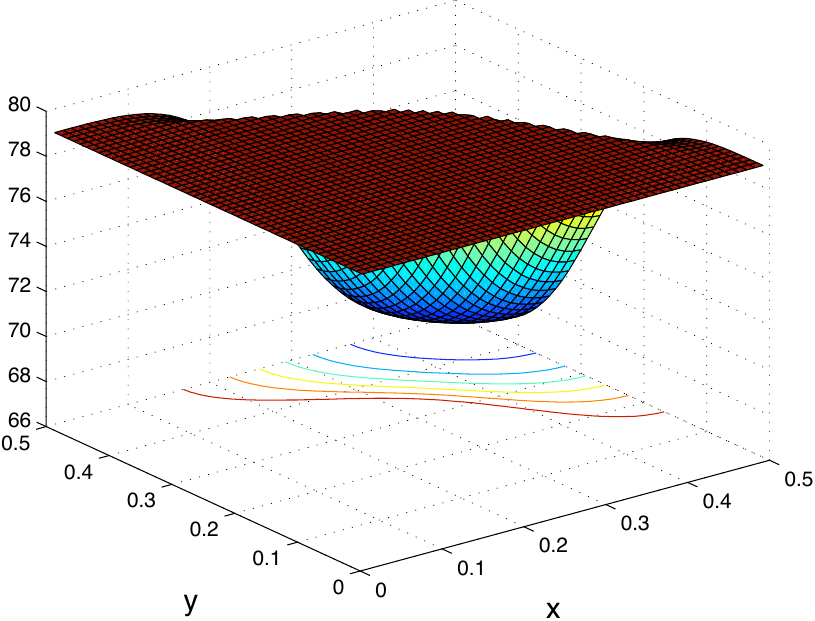}}
  \caption{Three-dimensional vision of $\lambda_k^{(a,\frac{1}{2})}$, 
  $k = 1, \ldots, 4$, for the square}
  \label{fig:3D-square}
 \end{center}
\end{figure}

Since Theorem~\ref{t:main} is related to local properties of the associated 
eigenfunctions by means of conditions $(i)$--$(ii)$, we also present in 
Figure~\ref{fig:graph} the graphs of the nodal set of eigenfunctions in 
the square when the singular pole is at its center. In the case of the disk, 
we refer to \cite[Figures 7 and 8]{BonnaillieHelffer2013}. 

\begin{figure} 
 \begin{center}
  \subfigure[$\varphi_1^{(0,\frac{1}{2})}$]
  {\includegraphics[scale=0.25]{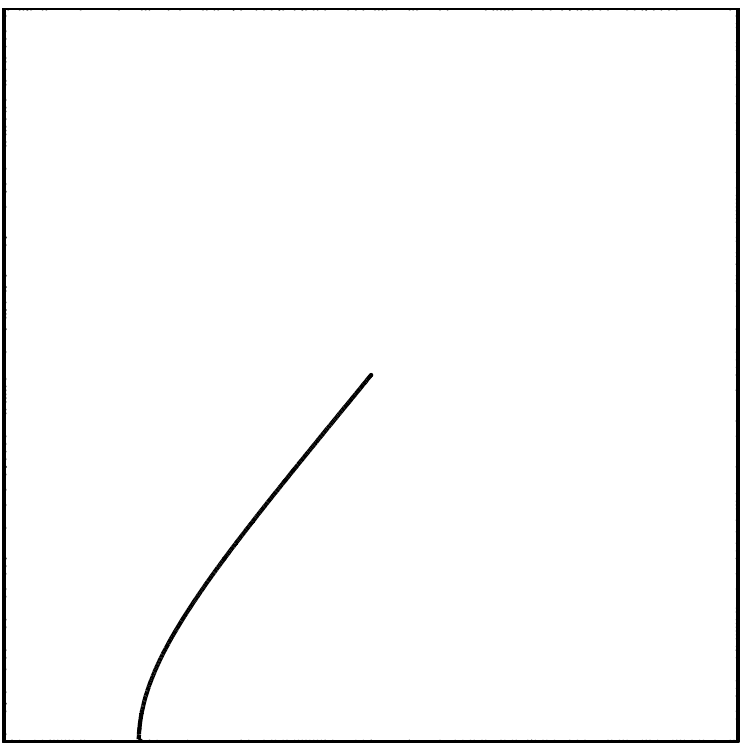}}
  \subfigure[$\varphi_2^{(0,\frac{1}{2})}$]
  {\includegraphics[scale=0.25]{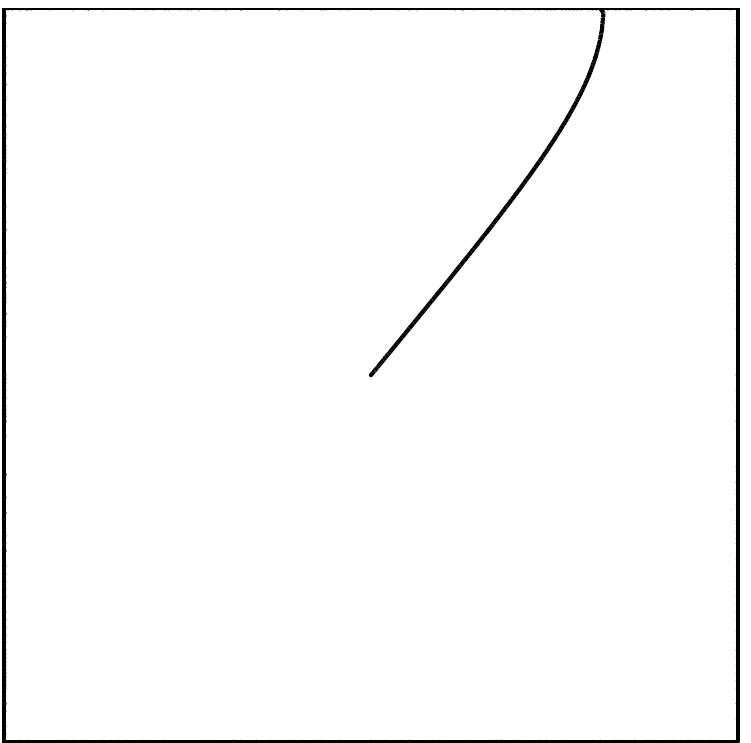}}
  \subfigure[$\varphi_3^{(0,\frac{1}{2})}$]
  {\includegraphics[scale=0.25]{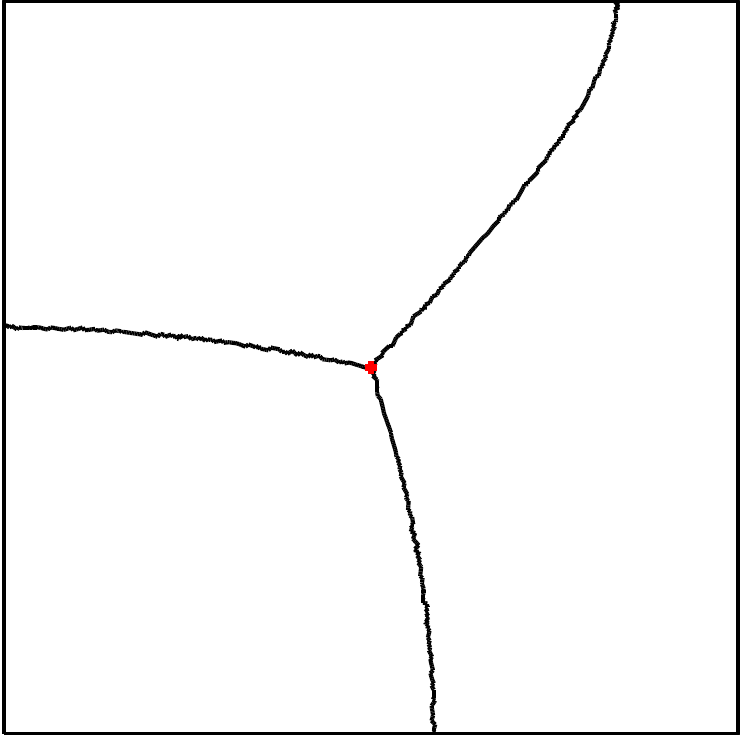}}
  \subfigure[$\varphi_4^{(0,\frac{1}{2})}$]
  {\includegraphics[scale=0.25]{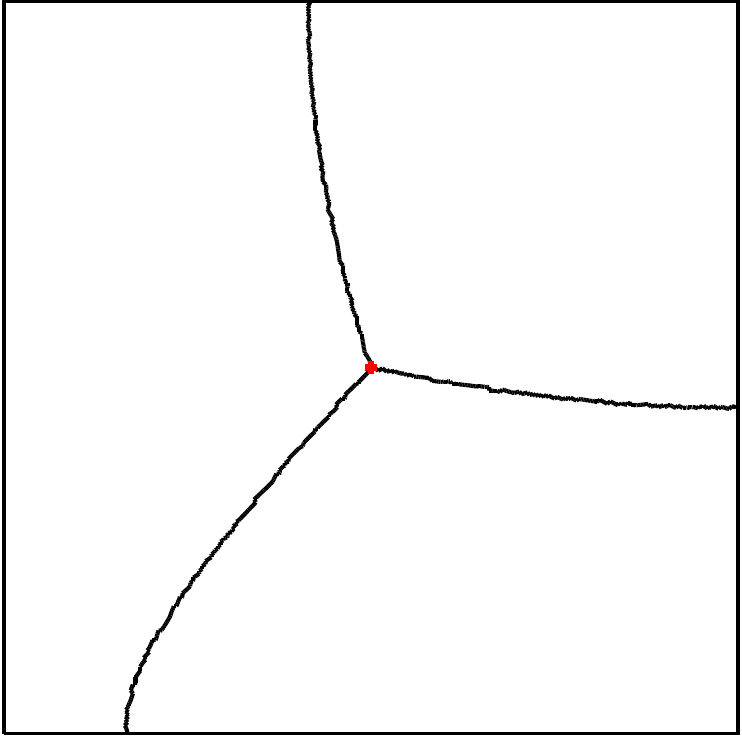}}
  \caption{Nodal set of the first four eigenfunctions in the square with 
  singular pole at the center}
  \label{fig:graph}
 \end{center}
\end{figure}

We present here a collection of observations on these simulations.

\begin{itemize}
\item[(a)] When the singular pole is at the center of the square, or the disk, 
see \cite[Figures 7 and 8]{BonnaillieHelffer2013}, the eigenvalues are always 
of multiplicity two. 

\vspace{0.1cm}

\item[(b)] For the angular sector, the points where eigenvalues are not simple 
correspond to points where they are not differentiable. Indeed, we see in 
Figure~\ref{fig:sector-square}(a) and Figure~\ref{fig:3D-sec} that the meeting 
points present a structure of a singular cone. Moreover, when the same thing 
happens in the square, at those points there is only one nodal line for the 
corresponding eigenfunctions (see Figure~\ref{fig:sector-square}(b) and (c) and 
Figure~\ref{fig:graph}).

\vspace{0.1cm}

\item[(c)] For the square and the disk, if the singular pole is at 
the center, two cases occur. In the first case eigenvalues are not 
differentiable at that point and the corresponding eigenfunctions 
have exactly one nodal line. This is for example the case for the 
first and second eigenvalues, where we note the structure of non 
differentiable cone in Figures~\ref{fig:sector-square}, \ref{fig:3D-square} 
(a) and (b), and the unique nodal line in Figure~\ref{fig:graph} 
(a) and (b). In the second case eigenvalues are differentiable, and 
the corresponding eigenfunctions have more nodal lines. This happens 
for instance for the third and fourth eigenvalues, see Figures~\ref{fig:sector-square}, 
\ref{fig:3D-square} (c) and (d) and Figure~\ref{fig:graph} (c) and (d).

\vspace{0.1cm}

\item[(d)] The two linearly independent eigenfunctions corresponding to 
the same eigenvalue always have the same number of nodal lines ending at 
the singular point; moreover, those lines leave the point in opposite directions, 
therefore never in a tangential way. This can be seen in Figure~\ref{fig:graph} 
and in \cite[Figure 8]{BonnaillieHelffer2013}.

\vspace{0.1cm}

\item[(e)] When considering only variations of the position of the pole, 
it seems that the set 
\[
\{a \in \Omega \, : \, \lambda_k^{(a,\sfrac{1}{2})} \text{ is an eigenvalue 
of } (i \nabla + A_a^{\sfrac{1}{2}})^2 \text{ of multiplicity two} \}
\]
is a finite collection of points in $\Omega$.

\vspace{0.1cm}

\item[(f)] Points of multiplicity higher than two seem not to happen.
\end{itemize}

Observations (b), (c) and (d) suggest that there may be a relation 
between the number of nodal lines of the two eigenfunctions and the 
way the graphs of two subsequent eigenvalues meet at the multiple 
point. Indeed, when they both have one nodal line leaving the point 
in opposite directions, the lines in Figure~\ref{fig:sector-square} 
meet \emph{transversally} and with a non vanishing derivative on the 
two branches, while if there is more than one nodal line, the lines 
in Figure~\ref{fig:sector-square} meet not \emph{transversally} and 
with a vanishing derivative. This reminds us Theorem~\ref{thm:critical}, 
where the criticality of the simple eigenvalues is related to the 
number of nodal lines of the corresponding eigenfunctions. For the 
derivatives at multiple eigenvalues when the domain is perturbed by 
means of a regular vector field, we refer the reader to the book of Henrot 
\cite{Henrot}.

By observation (a), we foresee that all the eigenvalues are double 
because of the strong symmetries of the domain. As well, we think that 
the existence of multiple points such that the eigenvalue is differentiable 
(and then where the eigenfunctions have more than one nodal lines) can 
also be explained by those symmetries.

\medbreak

In view of observation (e), our main Theorem \ref{t:main} provide a 
stronger analysis involving the combined parameters $(a,\alpha)$. 
However, it provides a local result around the point $(0,\tfrac{1}{2})$, 
and there chances to extend it in order to obtain a global result for 
every circulation $\alpha$. The analysis performed in our paper and the 
results achieved leave several other open questions. At the present time, 
we are not able to consider vanishing orders of the eigenfunctions greater 
than $1/2$ (i.e.\ the cases where the eigenfunctions have more than one 
nodal lines), but simulations suggest that multiple points are isolated 
even in these situations. By the way, simulations performed in 
\cite[Section 7]{BonnaillieNorisNysTerracini2014} and \cite{BonnaillieHelffer2011} 
suggest that high orders of vanishing for eigenfunctions may only occur when 
the domain $\Omega$ has strong symmetries (e.g. the disk or the square). 
On the other hand, as far as we know, two linearly independent orthogonal 
eigenfunctions corresponding to the same eigenvalue may have different 
orders of vanishing, but the available simulations do not show such a 
situation. The other fundamental assumption which plays a role in Theorem~\ref{t:main} 
is condition $(ii)$, which prevents the eigenfunctions nodal lines to be tangent. 
How general the assumptions of Theorem~\ref{t:main} can be is currently under 
investigation by the authors.

\medbreak

The paper is organized as follows. We devote Section~\ref{sec:gauge} to illustrate 
the main differences between half-integer and non half-integer circulations. In 
Section~\ref{sec:abstract} we prove the complex version of the abstract theorem 
from \cite{LupoMicheletti1993}. Section~\ref{sec:modified-operator} gives us the 
perturbation of the domain used to obtain new operators with fixed definition domains. 
In Section~\ref{sec:continuity}, we prove Theorem~\ref{thm:reg-a-alpha} which gives 
first a continuity result for the eigenvalues with respect to the combined parameters 
$(a, \alpha)$, and next a regularity results for simple 
eigenvalues, with respect to the same parameters. Section~\ref{sec:equivalent-operator} 
is (with Section~\ref{sec:modified-operator}) the most technical of the paper. We give 
therein the explicit form of the spectrally equivalent operators to $(i \nabla + A_a^\alpha)^2$ 
(and its inverse). Finally, in Section~\ref{sec:proof}, we make all the explicit 
computations using the local asymptotic behavior of the eigenfunctions \eqref{eq:expansion-1/2} 
and use the abstract Theorem to prove our main result Theorem~\ref{t:main}.

\section{Preliminaries}  \label{sec:preli}

\subsection{Functional spaces} \label{subsec:functional}

We notice that throughout the paper (except for Subsection~\ref{subsec:reg}), 
all the Hilbert spaces are \emph{complex} Hilbert spaces, i.e.\ they have complex 
scalar products.

If $\Omega \subset \R^2$ is open, bounded and simply connected, for $a \in \Omega$, 
we define the functional space $H^{1,a}(\Omega,\C)$ as the completion of $\{ u \in 
H^1(\Omega, \C) \cap C^\infty(\Omega, \C) \, : \, u \text{ vanishes in a neighborhood 
of } a \}$ with respect to the norm
\[
\| u \|_{H^{1,a}(\Omega,\C)} := \left( \| \nabla u \|_{L^2(\Omega,\C^2)}^2 
+ \| u \|_{L^2(\Omega,\C)}^2 + \left\| \frac{u}{|x-a|} 
\right\|_{L^2(\Omega,\C)}^2 \right)^{1/2}.
\]
When the circulation $\alpha$ is not an integer, i.e.\ $\alpha \in \R \setminus \Z$, 
the latter norm is equivalent to the norm
\begin{equation*}
\| u \|_{H^{1,a}(\Omega,\C)} = \left( \left\| ( i \nabla + A_a^\alpha ) u 
\right\|^2_{ L^2(\Omega,\C^2) } + \| u \|^2_{ L^2(\Omega,\C) } \right)^{\!\!1/2},
\end{equation*}
in view of the Hardy type inequality proved in \cite{LaptevWeidl1999} (see also 
\cite{Balinsky} and
\cite[Lemma 3.1 and Remark 3.2]{FelliFerreroTerracini2011})
\[
\int_{ D_r(a) } | (i\nabla + A_a^\alpha) u |^2 \,dx \geq \Big( \min_{j \in \Z} 
|j - \alpha| \Big)^2 \int_{ D_r(a) } \frac{ |u(x)|^2 }{|x-a|^2} \, dx,
\]
which holds for all $r > 0$, $a \in \R^2$  and $u \in H^{1,a}(D_r(a),\C)$. Here we 
denote as $D_r(a)$ the disk of center $a$ and radius $r$.

As well, the space $H^{1,a}_0(\Omega, \C)$ is defined as the completion of $C^\infty_c
(\Omega \setminus \{a\}, \C)$ with respect to the norm $\| u \|_{H^{1,a}(\Omega,\C)}$. 
By a Poincaré type inequality, see e.g.\ \cite[A.3]{AbatangeloFelliNorisNys2016}, we 
can consider the following equivalent norm on $H^{1,a}_0(\Omega, \C)$
\[
\| u \|_{H^{1,a}_0(\Omega,\C)} := \left( \| (i \nabla + A_a^\alpha) u 
\|_{L^2(\Omega,\C^2)}^2 \right)^{1/2}.
\] 

Finally, $(H^{1,a}_0(\Omega,\C))^\star$ is the space dual to $H^{1,a}_0(\Omega,\C)$. 
We emphasize that as long as $\alpha$ is not an integer, those spaces are independent 
of $\alpha$.

\subsection{Eigenvalues and eigenfunctions}

We look at the operator defined in \eqref{eq:operator}
\[
(i \nabla + A_a^\alpha)^2 \, : \, H^{1,a}_0(\Omega,\C) \to ( H^{1,a}_0(\Omega,\C))^\star.
\]
In a standard way, for any $u \in H^{1,a}_0(\Omega,\C)$, 
$(i \nabla + A_a^\alpha)^2 u$ acts in the following way
\begin{equation} \label{eq:magn-lap}
\phantom{a}_{( H^{1,a}_0(\Omega,\C))^\star} \left\langle 
(i \nabla + A_a^\alpha)^2 u, v 
\right\rangle_{H^{1,a}_0(\Omega,\C)} 
:= \int_{\Omega} (i \nabla + A_a^\alpha) u \cdot 
\overline{(i \nabla + A_a^\alpha) v}.
\end{equation}
By standard spectral theory the inverse operator 
\[
[(i \nabla + A_a^\alpha)^2]^{-1} \circ \text{Im}_{H^{1,a}_0(\Omega,\C) 
\to ( H^{1,a}_0(\Omega,\C))^\star} \, : \, H^{1,a}_0(\Omega,\C) \to H^{1,a}_0(\Omega,\C)
\]
is compact because of the compactness of the embedding 
$\text{ Im}_{H^{1,a}_0(\Omega,\C) \to ( H^{1,a}_0(\Omega,\C))^\star}$ 
coming from the compact embedding
\[
H^{1,a}_0(\Omega, \C) \hookrightarrow L^2(\Omega, \C),
\]
and the continuity of the immersion $L^2(\Omega,\C) \hookrightarrow 
( H^{1,a}_0(\Omega,\C))^\star$, see e.g. \cite{Salsa}.

We are  considering the eigenvalue problem
\begin{equation}\label{eq:eige_equation_a} \tag{$E_{a,\alpha}$}
\begin{cases}
(i \nabla + A_a^\alpha)^2 u = \lambda u,  &\text{in }\Omega,\\
u = 0, &\text{on }\partial \Omega,
\end{cases}
\end{equation}
in a weak sense, and  we say that $\lambda\in\C$ is an eigenvalue of
problem \eqref{eq:eige_equation_a} if there exists $u\in
H^{1,a}_{0}(\Omega,\C)\setminus\{0\}$ (called eigenfunction) such that
\[
\int_\Omega (i\nabla + A_a^\alpha) u 
\cdot \overline{(i \nabla + A_a^\alpha) v} \, dx 
= \lambda \int_\Omega u \overline{v} \,dx 
\quad \text{ for all } v \in H^{1,a}_{0}(\Omega,\C).
\]
From classical spectral theory (using the self-adjointness of the operator 
and the compactness of the inverse operator), for every $(a,\alpha) \in \Omega 
\times \R$, the eigenvalue problem \eqref{eq:eige_equation_a} admits a diverging 
sequence of real and positive eigenvalues $\{\lambda_k^{(a,\alpha)}\}_{k\geq 1}$ 
with finite multiplicity. In the enumeration
\[
0 < \lambda_1^{(a,\alpha)} \leq \lambda_2^{(a,\alpha)} \leq \dots 
\leq \lambda_k^{(a,\alpha)} \leq \dots
\]
we repeat each eigenvalue as many times as its multiplicity. Those eigenvalues 
also have a variational characterization given by 
\begin{equation} \label{eq:var-char}
\begin{split}
\lambda_k^{(a, \alpha)} = \min \Big\{ \sup_{u \in W_k \setminus\{0\}} 
\frac{ \int_{\Omega} |(i \nabla + A_a^\alpha) u|^2 }{\int_{\Omega} |u|^2 }
\, : \, & W_k \text{ is a linear subspace of } H^{1,a}_0(\Omega, \C), \\
&  \, \text{dim} W_k = k \Big\}.  
\end{split}
\end{equation}
We denote by $\varphi_k^{(a,\alpha)} \in H^{1,a}_0(\Omega,\C)$ the 
corresponding eigenfunctions orthonormalized in $L^2(\Omega,\C)$. 
We note that if $\varphi_k^{(a,\alpha)}$ is an eigenfunction of 
$(i \nabla + A_a^\alpha)^2$ of eigenvalue $\lambda_k^{(a,\alpha)}$, 
it is also an eigenfunction of $[(i \nabla + A_a^\alpha)^2]^{-1} 
\circ \text{ Im}_{H^{1,a}_0(\Omega,\C) \to ( H^{1,a}_0(\Omega,\C))^\star}$ with 
eigenvalue $(\lambda_k^{(a,\alpha)})^{-1}$.

\section{The gauge invariance} \label{sec:gauge}

Among all the circulations $\alpha \in \R$, the case $\alpha \in \{1/2\} + \Z $ 
presents very special features. For the reader's convenience, in this Section we 
are recalling some basic facts about eigenfunctions of Aharonov--Bohm operators. We 
gain them partially as they are stated in \cite[Section 3]{AbatangeloFelliLena2016}.

\subsection{General facts on the gauge invariance} \label{subsec:general}

\begin{definition}\label{def:gauge-invariance}
We call \emph{gauge function} a smooth complex valued function $\psi \, : \, \Omega 
\to \C$ such that $|\psi| \equiv 1$. To any gauge function $\psi$, we associate a 
\emph{gauge transformation} acting on the pairs magnetic potential -- function as 
$(A,u) \mapsto (A^*,u^*)$, with
\begin{align*}
& A^* = A + i \frac{\nabla\psi}{\psi}, \\
& u^* = \overline{\psi} u,
\end{align*}
where $\nabla \psi = \nabla(\Re\psi)+i \nabla(\Im\psi)$. We notice that since $|\psi| 
= 1$, $i \frac{\nabla\psi}{\psi}$ is a real vector field. Two magnetic potentials are 
said to be gauge equivalent if one can be obtained from the other by a gauge transformation 
(this is an equivalence relation).
\end{definition}

The following result is a consequence, see \cite[Theorem 1.2]{Leinfelder1983}.
\begin{proposition}\label{prop:unieq}
If $A$ and $A^*$ are two gauge equivalent vector potentials, the operators 
$(i\nabla + A)^2$ and $(i\nabla + A^*)^2$ are unitarily equivalent, that 
is
\[
\overline{\psi} \, (i \nabla + A)^2 \, {\psi} = (i \nabla + A^*)^2.
\]
\end{proposition}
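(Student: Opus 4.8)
\emph{Proof proposal.} The plan is to reduce the statement to a one-line computation at the level of first-order operators, the second-order identity then following by iteration, and finally to invoke \cite[Theorem 1.2]{Leinfelder1983} for the functional-analytic wrapping. Write $M_\psi$ for the operator of pointwise multiplication by $\psi$; since $|\psi|\equiv 1$, $M_\psi$ is unitary on $L^2(\Omega,\C)$ with inverse $M_{\overline\psi}$, and, being smooth together with its inverse, it maps each of the weighted Sobolev spaces of Subsection~\ref{subsec:functional} isomorphically onto itself. The core observation I would establish first is the intertwining relation
\[
(i\nabla + A^*)\bigl(\overline\psi\, u\bigr) = \overline\psi\,(i\nabla + A)\,u
\]
for every sufficiently regular $u$. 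To verify it, expand the left-hand side by the product rule, $i\nabla(\overline\psi u) = i(\nabla\overline\psi)u + i\overline\psi\nabla u$, and substitute $A^* = A + i\nabla\psi/\psi$ from Definition~\ref{def:gauge-invariance}. Differentiating the identity $\psi\overline\psi\equiv 1$ gives $\nabla\overline\psi = -\overline\psi\,\nabla\psi/\psi$, so the term $i(\nabla\overline\psi)u$ produced by the product rule exactly cancels the term $i(\nabla\psi/\psi)\overline\psi u$ produced by $A^*$. What survives is precisely $\overline\psi\bigl(i\nabla u + A u\bigr)$, which is the claimed identity.

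Next I would iterate. Applying $(i\nabla + A^*)$ once more and using the intertwining relation again, now with $(i\nabla+A)u$ in the role of $u$, yields
\[
(i\nabla + A^*)^2\bigl(\overline\psi\, u\bigr)
= (i\nabla + A^*)\bigl(\overline\psi\,(i\nabla + A)u\bigr)
= \overline\psi\,(i\nabla + A)^2 u,
\]
that is, $(i\nabla+A^*)^2\circ M_{\overline\psi} = M_{\overline\psi}\circ(i\nabla+A)^2$ as differential expressions. Conjugating this by the unitary $M_\psi$ — multiplying on the left by $M_{\overline\psi}$ and on the right by $M_\psi$ and using $\psi\overline\psi\equiv 1$ — rearranges it into
\[
\overline\psi\,(i\nabla + A)^2\,\psi = (i\nabla + A^*)^2 ,
\]
which is the asserted identity; since $M_\psi$ is unitary on $L^2(\Omega,\C)$, this exhibits $(i\nabla+A)^2$ and $(i\nabla+A^*)^2$ as unitarily equivalent.

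The only point that is not purely formal is promoting the computation above to a statement about self-adjoint operators: one has to know that $M_\psi$ carries the operator domain (equivalently, the form domain) of $(i\nabla+A^*)^2$ onto that of $(i\nabla+A)^2$, so that the equivalence is genuine and not merely symbolic. This is exactly what \cite[Theorem 1.2]{Leinfelder1983} supplies, and it is why the Proposition is stated as a consequence of that result; the hypotheses $\psi\in C^\infty(\Omega,\C)$ and $|\psi|\equiv 1$ guarantee that $M_\psi$ and $M_{\overline\psi}$ preserve the relevant weighted spaces of Subsection~\ref{subsec:functional}, so no further work is required. Accordingly, I do not anticipate any real obstacle: the proof is the displayed identity together with the cited theorem.
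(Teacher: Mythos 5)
Your proposal is correct: the intertwining identity $(i\nabla+A^*)(\overline\psi u)=\overline\psi\,(i\nabla+A)u$ follows exactly as you say from $A^*=A+i\nabla\psi/\psi$ and $\nabla\overline\psi=-\overline\psi\,\nabla\psi/\psi$, and iterating it gives $\overline\psi\,(i\nabla+A)^2\,\psi=(i\nabla+A^*)^2$. The paper, however, does not carry out this computation at all: Proposition~\ref{prop:unieq} is stated purely as a consequence of \cite[Theorem 1.2]{Leinfelder1983}, with no argument given. So your route is a more explicit (and more elementary) version of the same statement — you verify the algebraic identity at the level of differential expressions by hand and only invoke Leinfelder for the domain/self-adjointness bookkeeping, whereas the paper delegates everything, including the identity itself, to the cited theorem. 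What your version buys is transparency: the cancellation mechanism behind gauge invariance is visible, and it is exactly the computation the paper later redoes concretely in \eqref{eq:relation-1} for $\psi=e^{i\theta_a}$. One small slip in your write-up: to pass from $(i\nabla+A^*)^2\circ M_{\overline\psi}=M_{\overline\psi}\circ(i\nabla+A)^2$ to the asserted identity you only need to multiply on the right by $M_\psi$ (using $M_{\overline\psi}M_\psi=I$); the additional left multiplication by $M_{\overline\psi}$ you mention would unbalance the identity, though it does not affect the conclusion since the final formula you state is the correct one.
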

We immediately see that if $A$ and $A^*$ are gauge equivalent, then the corresponding 
operators are spectrally equivalent, i.e.\ they have the same spectrum, and in particular 
they have the same eigenvalues with the same multiplicity. The equivalence between two 
vector potentials (which is equivalent to the fact that their difference is gauge equivalent 
to $0$) can be determined using the following criterion.

\begin{lemma}\label{lemma:EquivPot}
Let $A$ be a vector potential in $\Omega$. It is gauge equivalent to $0$ 
if and only if
\[
\frac{1}{2\pi}\oint_{\gamma} A(s) \cdot d\mathbf{s} \in \Z
\]  
for every closed path $\gamma$ contained in $\Omega$.
\end{lemma}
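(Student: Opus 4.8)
The statement to be proved is Lemma~\ref{lemma:EquivPot}: a vector potential $A$ on the simply connected domain $\Omega$ is gauge equivalent to $0$ if and only if $\frac{1}{2\pi}\oint_\gamma A(s)\cdot d\mathbf{s}\in\Z$ for every closed path $\gamma$ in $\Omega$. I would argue by constructing explicitly, from the loop integrals, a candidate gauge function $\psi$. The key observation is that being gauge equivalent to $0$ means, by Definition~\ref{def:gauge-invariance}, that there is a smooth $\psi:\Omega\to\C$ with $|\psi|\equiv1$ and $A = i\,\nabla\psi/\psi$; writing $\psi = e^{i\phi}$ locally for a real phase $\phi$, this is exactly $A = -\nabla\phi$, i.e. $A$ is (minus) a gradient, up to the multivaluedness of $\phi$. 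So the lemma is essentially the statement that $A$ is a gradient of a single-valued $\C$-valued unimodular function precisely when its periods lie in $2\pi\Z$.

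\textbf{Step 1: the easy direction.} Suppose $A$ is gauge equivalent to $0$, so $A = i\,\nabla\psi/\psi$ with $|\psi|\equiv1$. Along any closed path $\gamma:[0,1]\to\Omega$ with $\gamma(0)=\gamma(1)$, one computes $\oint_\gamma A\cdot d\mathbf{s} = \int_0^1 i\,\frac{\nabla\psi(\gamma(t))}{\psi(\gamma(t))}\cdot\dot\gamma(t)\,dt = i\int_0^1 \frac{d}{dt}\log\psi(\gamma(t))\,dt$. Since $|\psi|=1$, the function $t\mapsto\psi(\gamma(t))$ is a loop in the unit circle, and $-i\int_0^1\frac{d}{dt}\log\psi(\gamma(t))\,dt$ is $2\pi$ times its winding number, hence an element of $2\pi\Z$. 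Therefore $\frac{1}{2\pi}\oint_\gamma A\cdot d\mathbf{s}\in\Z$. (This direction does not use simple connectedness.)

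\textbf{Step 2: the converse.} Assume all periods lie in $2\pi\Z$. Fix a base point $x_0\in\Omega$ and define $\phi(x) = -\int_{x_0}^x A(s)\cdot d\mathbf{s}$, the integral taken along any path in $\Omega$ from $x_0$ to $x$. A priori this depends on the path, but two paths from $x_0$ to $x$ differ by a closed loop $\gamma$, and the discrepancy is $-\oint_\gamma A\cdot d\mathbf{s}\in 2\pi\Z$; hence $\phi$ is well-defined modulo $2\pi\Z$, so $\psi(x) := e^{i\phi(x)}$ is a genuinely single-valued smooth function $\Omega\to\C$ with $|\psi|\equiv1$. (Here I would note that even the mod-$2\pi$ ambiguity in the first place uses that $\Omega$ is open and connected so that path integrals make sense; for full single-valuedness one does not even need simple connectedness once the integrality of all periods is assumed — though simple connectedness is the natural hypothesis ensuring such a $\phi$ can be chosen, and it is in force throughout the paper.) Differentiating, $\nabla\phi = -A$ on $\Omega$, so $i\,\nabla\psi/\psi = i\cdot i\,\nabla\phi = -\nabla\phi = A$. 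Thus $(A,u)\mapsto(A - A, \overline\psi u) = (0,\overline\psi u)$ exhibits $A$ as gauge equivalent to $0$ in the sense of Definition~\ref{def:gauge-invariance}.

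\textbf{Main obstacle.} There is no deep obstacle; the content is elementary differential topology. The one point requiring a little care is the well-definedness of $\psi$ in Step~2: one must check that the path-dependence of $\int A$ is governed \emph{only} by the homotopy/homology class of the difference loop and that integrality of \emph{all} periods — not merely of a basis — is what makes $e^{i\phi}$ single-valued; on a simply connected $\Omega$ every loop is contractible so in fact $\int A$ is already path-independent modulo the (automatically integral, by hypothesis) periods, and the argument collapses to: $A$ closed $\Rightarrow$ $A = -\nabla\phi$ for a possibly multivalued $\phi$, and the period condition makes $e^{i\phi}$ single-valued. I would present it in this streamlined form, citing Poincaré's lemma for the existence of a local primitive and gluing via the integrality condition.
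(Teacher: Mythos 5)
Your proof is correct, and in fact the paper offers no argument of its own to compare against: Lemma~\ref{lemma:EquivPot} is stated there as a known criterion (it is the standard gauge-equivalence test from the Aharonov--Bohm literature, cf.\ \cite{HHOO1999,Leinfelder1983}), so your two steps — the winding-number computation for necessity and the construction $\psi=e^{i\phi}$, $\phi(x)=-\int_{x_0}^{x}A\cdot d\mathbf{s}$, single-valued modulo $2\pi\Z$ thanks to the period hypothesis, for sufficiency — supply exactly the expected standard argument. Two minor bookkeeping remarks. First, your appeal to the Poincar\'e lemma for a local primitive implicitly uses that $A$ is curl-free; this is harmless, since in every application in the paper $A$ is a magnetic potential with vanishing field away from the pole, and in any case the integrality of the circulation along \emph{all} closed paths, applied to a continuously shrinking family of small loops, forces every small circulation to lie in the discrete set $2\pi\Z$ and hence to vanish, so closedness is automatic. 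Second, with your convention $A=i\nabla\psi/\psi$ the gauge transformation of Definition~\ref{def:gauge-invariance} carried by $\psi$ sends $(0,u)$ to $(A,\overline{\psi}u)$, while the one carried by $\overline{\psi}$ sends $(A,u)$ to $(0,\psi u)$; your final displayed pair mixes these two, which is a sign slip and not a gap, since either formulation exhibits the gauge equivalence of $A$ and $0$.
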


Whenever the vector potential $A$ is gauge equivalent to 0, i.e.\ there is 
a gauge function $\psi$ such that $A = - i \frac{\nabla \psi}{\psi}$, we 
can define the antilinear antiunitary operator $K$ by
\begin{equation} \label{eq:K-A}
K u= \psi \bar{u}.
\end{equation}

\begin{definition}
We say that a function $u \in L^2(\Omega,\C)$ is \emph{$K$-real} when $K u = u$.
\end{definition}

\subsection{Aharonov--Bohm potentials}

When the circulation $\alpha = n$ is an integer, i.e.\
\[
\frac{1}{2\pi} \oint_{\gamma} A_a^n \cdot d\mathbf{s} \in \mathbb{Z},
\]
for any closed path $\gamma$ contained in $\Omega$, it directly holds 
by Lemma~\ref{lemma:EquivPot} that $A_a^n$ is gauge equivalent to $0$. 
Moreover, in that particular case, we can give an explicit expression 
to the gauge function of Definition~\ref{def:gauge-invariance}. For any 
$a \in \mathbb{R}^2$, we define $\theta_a \, : \, \mathbb{R}^2 \setminus 
\{ a\} \to [0,2\pi)$ the polar angle centred at $a$ such that
\begin{equation} \label{eq:polar-angle-a}
\theta_a(a + r( \cos t, \sin t)) = t, \quad \text{ for } t \in [0, 2 \pi).
\end{equation}
We remark that such an angle is regular except on the half-line
\[
\{ (x_1, x_2) \in \mathbb{R}^2 \, : \, x_2 = a_2, \, x_1 > a_1 \}.
\]
From relation \eqref{eq:polar-angle-a} we immediately observe that for 
any $n \in \mathbb{Z}$
\begin{equation} \label{eq:gauge-n}
A_a^n = -i e^{- i n \theta_a} \nabla e^{i n \theta_a} = n \nabla \theta_a,
\end{equation}
almost everywhere. Therefore the gauge function is given by the phase 
$e^{i n \theta_a}$ and such a phase is well defined and smooth thanks 
to the fact that the circulation $\alpha = n$ is an integer. 
Proposition~\ref{prop:unieq} tells us then that, for any $n \in \mathbb{Z}$, 
$(i\nabla + A_a^n)^2$ and $-\Delta$ are unitarily equivalent, i.e.\ 
the spectrum of $(i \nabla + A_a^n)^2$ coincides with the spectrum of $-\Delta$.

Moreover the same gauge transformation tells us that, for any $\alpha \in (0,1)$ 
and $n \in \mathbb{Z}$, $(i \nabla + A_a^\alpha)^2$ and $(i \nabla + A_a^{\alpha + n})^2$ 
are unitarily equivalent, i.e.\ the spectrum of $(i \nabla + A_a^\alpha)^2$ coincides 
with the spectrum of $(i \nabla + A_a^{\alpha + n})^2$. Those observations tell us 
that it is sufficient to consider magnetic potentials with circulations $\alpha \in 
(0,1)$ since the other ones can be recovered from them, and $\alpha$ integer does 
not present any interest. This will be the case in the rest of the paper.

\medbreak

However, amoung the circulations $\alpha \in (0,1)$, the case $\alpha = 1/2$ 
presents special features. We refer to 
\cite{FelliFerreroTerracini2011,HHOO1999} for details. For any magnetic potential 
$A_a^{\sfrac{1}{2}}$ defined in \eqref{eq:magnetic-pot} it holds that
\[
\frac{1}{2\pi} \oint_{\gamma} 2 A_a^{\sfrac{1}{2}} \cdot d \mathbf{s} 
= \frac{1}{2\pi} \oint_{\gamma} A_a^{1} \cdot d \mathbf{s} 
= 1 \in \mathbb{Z},
\] 
for any closed path $\gamma$ containing $a$, so that, by Lemma~\ref{lemma:EquivPot}, 
$2 A_a^{\sfrac{1}{2}}$ is gauge equivalent to $0$. Therefore, 
by Definition~\ref{def:gauge-invariance} and \eqref{eq:polar-angle-a}--\eqref{eq:gauge-n}
\[
2 A_a^{\sfrac{1}{2}} = - i e^{- i \theta_a} \nabla e^{i \theta_a} = \nabla \theta_a.
\]

We write the antilinear and antiunitary operator of \eqref{eq:K-A}, which 
depends on the position of the pole $a\in \Omega$ through the angle $\theta_a$, 
as
\begin{equation} \label{eq:K-a}
K_a u = e^{i \theta_a} \overline{u}.
\end{equation}
For all $u \in C^{\infty}_0({\Omega\setminus\{a\}},\C)$ we have 
\begin{equation} \label{eq:relation-1}
\begin{split}
( i \nabla + A_a^{\sfrac{1}{2}} ) (K_a u) 
& = (i \nabla + A_a^{\sfrac{1}{2}}) (e^{i \theta_a} \bar{u}) 
= e^{i \theta_a} \left( i \nabla + i \frac{\nabla e^{i \theta_a} }{e^{i \theta_a}} 
+ A_a^{\sfrac{1}{2}} \right) \bar{u} \\
& = e^{i \theta_a} \left( i \nabla - A_a^{\sfrac{1}{2}} \right) \bar{u} 
= - e^{i \theta_a} \overline{ \left( i \nabla + A_a^{\sfrac{1}{2}} \right) u } 
= - K_a ((i \nabla + A_a^{\sfrac{1}{2}}) u),
\end{split}
\end{equation}
and therefore $(i \nabla + A_a^{\sfrac{1}{2}})^2$ and $K_a$ commute
\begin{equation} \label{eq:commutation}
(i \nabla + A_a^{\sfrac{1}{2}})^2 \circ K_a = K_a \circ (i \nabla + A_a^{\sfrac{1}{2}})^2.
\end{equation}
Let us denote 
\[
L^{2}_{K_a}(\Omega,\C) := \{ u \in L^2(\Omega,\C) \, : \, K_a u = u \}.
\] 
The restriction of the scalar product to $L^2_{K_a}(\Omega,\C)$ gives 
it the structure of a \emph{real} Hilbert space, instead of a complex 
space. Relation \eqref{eq:commutation} implies that $L^2_{K_a}(\Omega,\C)$ 
is stable under the action of $(i\nabla + A_a^{\sfrac{1}{2}})^2$; we denote 
by $(i\nabla+A_a^{\sfrac{1}{2}})^2_{L^2_{K_a}(\Omega,\C)}$ the restriction 
of $(i\nabla+A_a^{\sfrac{1}{2}})^2$ to $L^2_{K_a}(\Omega,\C)$, which is a 
\emph{real} operator. There exists an orthonormal basis of $L^2_{K_a}(\Omega,\C)$ 
formed by eigenfunctions of $(i\nabla+A_a^{\sfrac{1}{2}})^2_{L^2_{K_a}(\Omega,\C)}$.

We notice that if $u \in C^\infty_0(\Omega \setminus \{a\}, \C)$ is $K_a$-real, 
i.e.\ $K_a u = u$, relation \eqref{eq:relation-1} becomes
\begin{equation} \label{eq:relation-2}
( i \nabla + A_a^{\sfrac{1}{2}} ) u = - e^{i \theta_a} 
\overline{(i \nabla + A_a^{\sfrac{1}{2}}) u}.
\end{equation}

Being allowed to consider $K_a$-real eigenfunctions of $(i \nabla + A_a^{\sfrac{1}{2}})^2$ 
means to work with the \emph{real} operator $(i \nabla + A_a^{\sfrac{1}{2}})^2_{L^2_{K_a}(\Omega, \C)}$ 
in the \emph{real} space $L^2_{K_a}(\Omega, \C)$. This leads to the special characterisation 
of the eigenfunctions for $\alpha = 1/2$ mentioned in \eqref{eq:expansion-1/2}. Indeed, 
let $u$ be a $K_a$-real eigenfunction of $(i \nabla + A_a^{\sfrac{1}{2}})^2$ of eigenvalue 
$\lambda$. If we consider the double covering manifold, already introduced in \cite{HHOO1999}, 
(where we use the equivalence $\R^2 \cong \C$) given by
\[
\Omega_a := \{ y \in \C  \, : \,  y^2 + a = x \in \Omega \},
\]
and if we define for $y \in \Omega_a$ the function
\begin{equation} \label{eq:weighted-eigenf}
v(y) := e^{-i \frac{\theta_a}{2}(y^2 + a)} u(y^2 + a) = e^{- i \theta_0(y)} u(y^2 + a),
\end{equation}
we have that $v$ is well defined in $\Omega_a$ since
\[
e^{-i \frac{\theta_a}{2}(y^2 + a)} = e^{- i \theta_0(y)} 
\text{ is well defined on } \Omega_a.
\]
Morever, $v$ is \emph{real} (this comes directly from the $K_a$-reality of $u$) 
and it is a weighted eigenfunction of $- \Delta$ in $\Omega_a$, i.e.\
\[
- \Delta v = 4 |y|^2 \lambda v \quad \text{ in } \Omega_a.
\] 
To this aim see also \cite[Lemma 2.3]{BonnaillieNorisNysTerracini2014} and references therein.

Finally, from \eqref{eq:weighted-eigenf} it follows that $v$ is antisymmetric 
with respect to the transformation $y \mapsto - y$. From the above facts, we 
conclude that the nodal set of $u$, $u^{-1}(\{0\})$ (which coincides with the 
nodal set of $v$), is made of curves. Moreover, from the antisymmetry of $v$, 
we deduce that $u$ always has an odd number of nodal lines at $a$, and then at 
least one. As showed in \cite[Theorem 6.3]{FelliFerreroTerracini2011} if we 
denote by $h \in \mathbb{N}$, $h$ odd, the number of nodal lines of $u$ ending at 
$a \in \Omega$, there exist $c$, $d \in \mathbb{R}$ with 
$c^2 + d^2 \neq 0$, and
\[
r^{- h/2} u(a + r(\cos t, \sin t)) \to e^{i \frac{t}{2}} \left( c \cos \frac{h t}{2} 
+ d \sin \frac{h t}{2} \right),
\]
as $r \to 0^+$ in $C^{1,\tau}([0,2\pi], \C)$ for any $\tau \in (0,1)$. 
Similarly, we can write
\[
u( a + r(\cos t, \sin t)) = r^{h/2} e^{i \frac{t}{2}} \left( c \cos \frac{h t}{2} 
+ d \sin \frac{h t}{2} \right) + f(r, t), 
\]
as $r \to 0^+$, where $f(r,t) = O(r^{h/2 + 1})$ uniformly in $t \in [0, 2 \pi]$. 
The fact that $c$ and $d \in \mathbb{R}$ comes from the $K_a$-reality of $u$.

\medbreak

When $A_a^\alpha$ defined in \eqref{eq:magnetic-pot} has circulation 
$\alpha \in (0,1) \setminus\{1/2\}$, we still have that
\[
\frac{1}{\alpha} A_a^\alpha = - i e^{- i \theta_a} \nabla e^{i \theta_a},
\]
for $\theta_a$ defined in \eqref{eq:polar-angle-a}. However, the main difference 
is that we loose the commutation property between $(i \nabla + A_a^\alpha)^2$ and 
$K_a$ (given in \eqref{eq:K-a}). This means that we cannot consider a basis of 
$K_a$-real eigenfunctions of $L_{K_a}^2(\Omega, \C)$ and we must consider 
$(i \nabla + A_a^\alpha)^2$ as a complex operator, and the special expression 
\eqref{eq:expansion-1/2} with \emph{real} coefficients does not hold.

\section{Abstract result} \label{sec:abstract}

In order to prove our results, we follow the strategy of \cite{LupoMicheletti1993}.
The proof therein relies on a strong abstract result obtained by means of 
transversality theorems, see e.g.\ \cite[p28]{GuilleminPollackBook}. We need a 
slightly different version of their abstract theorem, that we enounce and prove here.

\begin{theorem}\label{theorem:abstract-complex}
Consider a Banach space $B$ and a complex Hilbert space $X$ with scalar product 
$(\cdot,\cdot)_X$. Let $V$ be a neighborhood of 0 in $B$ and consider a family 
of self-adjoint compact linear operators $\mathcal{T}_b \, : \, X \to X$ 
parametrized in $V$. Let $\lambda_0$ be an eigenvalue of $\mathcal{T}_0$ of 
multiplicity $\mu > 1$ and denote by $\{x_1^0, \ldots, x_\mu^0\}$ an orthonormal 
system of eigenvectors associated to $\lambda_0$. Suppose that the following two 
conditions hold
\begin{itemize}
\item[$(i)$] the map $b \mapsto \mathcal{T}_b$ is $C^1$ in $V$;
\item[$(ii)$] the application $F \, : \, B \to L_h(\R^n,\R^n)$ defined as 
$b \mapsto ( ( \mathcal{T}'(0)[b]) x_i^0, x_j^0)_{X \, \{i,j=1,\ldots,\mu\}}$ is such that 
\[
\mathrm{Im} F + [I] =  L_h(\R^\mu,\R^\mu).
\]
\end{itemize}
Then 
\[
\{ b \in B \, : \, \text{ there exists } \lambda_b \text{ eigenvalue of } 
\mathcal{T}_b \text{ near } \lambda_0 \text{ of multiplicity } \mu \}
\]
is a manifold in $B$ of codimension $\mu^2 - 1$.
\end{theorem}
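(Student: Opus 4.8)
The plan is to follow the Lupo--Micheletti strategy, but carried out in the complex-Hilbert-space setting, using a transversality (parametric Sard) argument. First I would set up the local reduction near the eigenvalue $\lambda_0$: since $\mathcal{T}_0$ is compact and self-adjoint, its spectrum is discrete near $\lambda_0$, so for $b$ in a small neighborhood $V'$ of $0$ the spectral projection $P_b$ onto the part of the spectrum of $\mathcal{T}_b$ close to $\lambda_0$ (defined by a Riesz integral $\frac{1}{2\pi i}\oint (\zeta - \mathcal{T}_b)^{-1}\,d\zeta$ around a small circle enclosing only $\lambda_0$) has constant rank $\mu$ and depends in a $C^1$ way on $b$, by hypothesis $(i)$. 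Composing with a $C^1$ family of isometries identifying $\mathrm{Range}(P_b)$ with $\mathrm{Range}(P_0) = \mathrm{span}\{x_1^0,\ldots,x_\mu^0\}$, the question ``does $\mathcal{T}_b$ have an eigenvalue of multiplicity $\mu$ near $\lambda_0$'' becomes the question ``is the reduced $\mu\times\mu$ matrix $M(b)$, which is Hermitian because $\mathcal{T}_b$ is self-adjoint, a scalar multiple of the identity''. So the set in the statement is $\Phi^{-1}(\mathbb{R}\cdot I)$ where $\Phi \colon V' \to \mathcal{H}_\mu$ is the $C^1$ map $b \mapsto M(b)$ into the space $\mathcal{H}_\mu$ of Hermitian $\mu\times\mu$ matrices, and $\mathbb{R}\cdot I$ is the line of scalar matrices.

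Next I would compute the differential of $\Phi$ at $0$. Since $M(b)_{ij} = (\mathcal{T}_b x_i^b, x_j^b)_X$ with $x_i^b = (\text{isometry})x_i^0$, differentiating and using that the isometry derivative terms contribute $\lambda_0$ times a skew-Hermitian matrix (hence land, after symmetrization, inside the tangent structure one must quotient by), the effective derivative is $d\Phi_0[b] = \big( (\mathcal{T}'(0)[b]\, x_i^0, x_j^0)_X \big)_{i,j} = F(b)$ up to a correction tangent to the orbit; more precisely, working modulo scalars, $d\Phi_0$ composed with the projection $\mathcal{H}_\mu \to \mathcal{H}_\mu / (\mathbb{R}\cdot I)$ has image equal to the image of $F$ modulo $[I]$. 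Hypothesis $(ii)$ says exactly $\mathrm{Im}\,F + [I] = L_h(\mathbb{R}^\mu,\mathbb{R}^\mu) = \mathcal{H}_\mu$ (note $L_h(\mathbb{R}^\mu,\mathbb{R}^\mu)$ here denotes the real vector space of Hermitian operators, of real dimension $\mu^2$), i.e. $\Phi$ is transverse at $0$ to the line $\mathbb{R}\cdot I \subset \mathcal{H}_\mu$. By the implicit function theorem / transversality theorem, $\Phi^{-1}(\mathbb{R}\cdot I)$ is, in a possibly smaller neighborhood of $0$, a $C^1$ submanifold of $B$ of codimension $\dim \mathcal{H}_\mu - \dim(\mathbb{R}\cdot I) = \mu^2 - 1$, which is the claimed conclusion.

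The main obstacle I expect is the careful bookkeeping in the reduction step: making precise the choice of the $C^1$ family of isometries (or, equivalently, the unitary straightening of $\mathrm{Range}(P_b)$) and verifying that the spurious derivative terms coming from this straightening are exactly absorbed into the skew-Hermitian directions, so that they do not affect the surjectivity-modulo-$[I]$ computation and in particular so that the orbit of the unitary group acting on $\mathcal{H}_\mu$ by conjugation is correctly identified as the ``redundant'' directions. One must check that fixing the multiplicity to be exactly $\mu$ (not larger) is an open condition that does not shrink the manifold, and that Hermitian matrices near $\lambda_0 I$ with a single eigenvalue of multiplicity $\mu$ are precisely the scalar matrices near $\lambda_0 I$ — which is immediate since any Hermitian matrix with one eigenvalue is scalar. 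The genuinely complex feature, as opposed to the real treatment in \cite{LupoMicheletti1993}, is that the relevant space of matrices is $\mathcal{H}_\mu$ of real dimension $\mu^2$ rather than the symmetric matrices of dimension $\mu(\mu+1)/2$; this is why the codimension is $\mu^2 - 1$ and not $\mu(\mu+1)/2 - 1$, and tracking this dimension count correctly through the isometry reduction is the subtle point.
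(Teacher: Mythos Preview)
Your proposal is correct and reaches the same conclusion, but the route differs from the paper's. The paper follows Lupo--Micheletti literally: it works directly in the infinite-dimensional space $L_h(X,X)$ of hermitian bounded operators, shows that the set $F_{\mu,\mu}=\{L\in\Phi_0\cap L_h(X,X):\dim\ker L=\mu\}$ is an analytic submanifold of real codimension $\mu^2$ (this is where the complex-versus-real dimension count enters), enlarges it to $\widetilde F_{\mu,\mu}=\{L+\lambda I_X:L\in F_{\mu,\mu},\ \lambda\in\mathbb R\}$ of codimension $\mu^2-1$, and then checks that hypothesis $(ii)$ is exactly transversality of $b\mapsto\mathcal T_b$ to $\widetilde F_{\mu,\mu}$ at $b=0$. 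You instead perform a Riesz--projection / Lyapunov--Schmidt reduction first, so the transversality happens entirely in the finite-dimensional space $\mathcal H_\mu$ of $\mu\times\mu$ hermitian matrices, with target submanifold the line $\mathbb R\cdot I$. Your approach is more elementary and self-contained, since it sidesteps the lemmas establishing the manifold structure of $F_{\mu,\mu}$ inside $L_h(X,X)$; the paper's approach has the advantage of being a direct transcription of \cite{LupoMicheletti1993}, so only the dimension count needs to be redone. One simplification you can make: with $x_i^b$ orthonormal, differentiating $(x_i^b,x_j^b)=\delta_{ij}$ gives $(\dot x_i,x_j^0)+(x_i^0,\dot x_j)=0$, so the ``spurious'' terms $\lambda_0[(\dot x_i,x_j^0)+(x_i^0,\dot x_j)]$ actually vanish identically and $d\Phi_0=F$ on the nose, rather than merely modulo skew-hermitian corrections.
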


\begin{proof}[Proof of Theorem~\ref{theorem:abstract-complex}]
For sake of clarity we sketch here the proof of the result, which follows the 
guidelines of its real counterpart contained in \cite[Theorem 1]{LupoMicheletti1993}. 

Let us denote by $L_h(X,X)$ the set of all linear continuous hermitian operators 
from $X$ to $X$ and by $\Phi_0(X,X)$ the set of all Fredholm operators of index 
$0$. We define the set 
\[
F_{\mu,\mu} := \{ L \in \Phi_0 \cap L_h(X,X) \, : \, \textrm{dim}\,\textrm{ker}L = 
\textrm{codim}\,\textrm{rk}L = \mu  \}.
\]
We remind that for any $L \in \Phi_0$ we have $X = \textrm{ker}L \oplus 
\textrm{rk}L $. We now fix $L_0 \in F_{\mu,\mu}$ and define the orthogonal projections 
\[
P \, : \, X \to \textrm{ker}L_0 \qquad Q \, : \, X \to \textrm{rk}L_0
\]
and the spaces 
\begin{align*}
V_0 := \{ H \in L_h(X,X) \,: \,  H(\textrm{ker}L_0) \subset \textrm{rk}L_0 \}
\qquad M := \{ PHP \in L_h(X,X) \, : \, H \in L_h(X,X) \}.
\end{align*}
Then \cite[Lemma 1]{LupoMicheletti1993} applies providing $L_h(X,X) = V_0 \oplus M$.

\begin{remark}
We remark that $\mathrm{dim} M = \mu^2$ since $L_h(X,X)$ is a \emph{real} vector 
space and not a complex vector space, and not $\mu(\mu+1)/2$ as in 
\cite[Remark 1]{LupoMicheletti1993}, where real Hilbert spaces $X$ were considered. 
\end{remark}

Then \cite[Lemma 2]{LupoMicheletti1993} applies providing that $F_{\mu,\mu}$ is 
an analytic manifold of real codimension $\mu^2$ in $L_h(X,X)$; moreover, the tangent 
plane at $L_0$ to $F_{\mu,\mu}$ is $V_0$. 

The next step consists in \cite[Lemma 3]{LupoMicheletti1993}. Let $L_0 \in F_{\mu,\mu}$ be
such that $\mathrm{ker}L_0$ is not contained in $\mathrm{rk}L_0$. Then there exists 
a neighborhood $U$ of $L_0$ in $L_h(X,X)$ such that
\[
\widetilde F_{\mu,\mu}:= \{ L + \lambda I_X \, : \, L \in U \cap F_{\mu,\mu},\ \lambda\in \R \}
\]
is a manifold, the tangent plane at $L_0$ to $\widetilde F_{\mu,\mu}$ is $V_0 \oplus [I_X]$
and the codimension of $\widetilde F_{\mu,\mu}$ in $L_h(X,X)$ is $\mu^2-1$. The proof follows 
as in \cite[Lemma 3]{LupoMicheletti1993}.

\begin{remark} 
We stress that in the definition of $\widetilde F_{\mu,\mu}$ we need $\lambda$ to be real in 
such a way that $\widetilde F_{\mu,\mu}$ is a subspace in $L_h(X,X)$ (indeed, if $\lambda\in 
\C \setminus \R$ then $\lambda I_X$ is not hermitian). This produces $-1$ in the codimension 
of $\widetilde F_{\mu,\mu}$ in $L_h(X,X)$ since $V_0$ does not contain $I_X$.
\end{remark}

The manifold $\widetilde F_{\mu,\mu}$ is the analytic manifold to which we can apply the 
transversality theorem, see e.g.\ \cite{GuilleminPollackBook}. The fundamental assumption 
$(ii)$ in Theorem~\ref{theorem:abstract-complex} implies that the map $b \mapsto \mathcal{T}_b$ 
is transversal at $0 \in B$ to $\widetilde F_{\mu,\mu}$, i.e. 
\[ 
\mathrm{rk}\mathcal{T}'(0) + \left([I_X] \oplus V_0 \right) = L_h(X,X).
\]
The end of the proof of Theorem~\ref{theorem:abstract-complex} follows as in 
\cite[Theorem 1]{LupoMicheletti1993}. 
\end{proof}

\section{The modified operator} \label{sec:modified-operator}

\subsection{The local perturbation}

Let us fix $\overline{R} > 0$ sufficiently small and such that 
$D_{2 \overline{R}}(0) \subset \Omega$. Let $\xi \in C^{\infty}
_c(\R^2)$ be a cut-off function such that
\begin{equation}\label{eq:xi}
0 \leq \xi \leq 1,\quad \xi \equiv 1 
\text{ on } D_{\overline{R}}(0), 
\quad \xi \equiv 0 
\text{ on } \R^2\setminus D_{2 \overline{R}}(0), 
\quad |\nabla \xi| \leq \frac4{\overline{R}} 
\text{ on }\R^2.
\end{equation}
We define for $a \in D_{\overline{R}}(0)$ the local transformation 
$\Phi_a \in C^\infty (\R^2, \R^2)$ by 
\begin{equation}\label{eq:Phi_a}
\Phi_a (x) = x + a \xi(x).
\end{equation}
Notice that $\Phi_a(0) = a$ and that $\Phi_a'$ is a perturbation 
of the identity 
\[
\Phi_a' = I + a \otimes \nabla\xi = 
\begin{pmatrix}
1 + a_1 \frac{\partial \xi}{\partial x_1} 
& a_1 \frac{\partial \xi}{\partial x_2} \\
a_2 \frac{\partial \xi}{\partial x_1} 
& 1 + a_2 \frac{\partial \xi}{\partial x_2}
\end{pmatrix},
\] 
so that 
\begin{equation}\label{eq:det}
J_a(x):=\det(\Phi_a')= 1 + a_1 \frac{\partial \xi}{\partial x_1} 
+ a_2 \frac{\partial \xi}{\partial x_2}
= 1 + a \cdot \nabla \xi.
\end{equation}
Let $R = \overline{R} /32$. Then, if $a \in D_R(0)$, $\Phi_a$ 
is invertible, its inverse $\Phi_a^{-1}$ is also $C^\infty(\R^2, 
\R^2)$, see e.g. \cite[Lemma 1]{Micheletti1972}, and it can be 
written as
\begin{equation}\label{eq:inversePhi_a}
\Phi_a^{-1}(y) = y - \eta_a(y).
\end{equation}
Moreover, from \eqref{eq:Phi_a} and \eqref{eq:inversePhi_a} we 
have
\[
\eta_a(y) = a \, \xi (y - \eta_a(y)) 
\quad \text{ or equivalently } \quad 
\eta_a( \Phi_a(x)) = a \, \xi(x).
\]
From this relation we deduce that
\begin{equation} \label{eq:derivative-eta-a}
\frac{\partial \eta_{a,i}}{\partial y_j}(\Phi_a(x)) 
= \frac{1}{J_a(x)} a_i \frac{\partial \xi}{\partial x_j}(x).
\end{equation}

\begin{lemma}\label{lemma:smooth-map}
Let $J_a$ be defined as in \eqref{eq:det}. The maps $(a, \alpha) 
\mapsto J_a$, $(a,\alpha) \mapsto \sqrt{J_a}$ and $(a, \alpha) 
\mapsto 1/\sqrt{J_a}$ are of class $C^\infty(D_R(0) \times (0,1), 
C^\infty(\R^2))$.
\end{lemma}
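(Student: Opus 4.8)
The plan is to prove that $J_a$, $\sqrt{J_a}$ and $1/\sqrt{J_a}$ depend smoothly on $(a,\alpha)$ by reducing everything to the explicit formula \eqref{eq:det}, namely $J_a(x) = 1 + a \cdot \nabla \xi(x)$. Since $\xi$ is a fixed $C^\infty_c(\R^2)$ function and $J_a$ does not in fact depend on $\alpha$ at all, the dependence on $\alpha$ is trivial, and it suffices to check the $C^\infty$ dependence on $a \in D_R(0)$. First I would observe that the map $a \mapsto J_a$ is actually \emph{affine} in $a$: writing $a = (a_1,a_2)$, we have $J_a = 1 + a_1 \partial_{x_1}\xi + a_2 \partial_{x_2}\xi$, so $a \mapsto J_a$ is the sum of a constant element of $C^\infty(\R^2)$ and a linear map $\R^2 \to C^\infty(\R^2)$; in particular it is a polynomial (degree one) map into the Fréchet space $C^\infty(\R^2)$, hence $C^\infty$. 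This handles the first of the three maps.

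For $\sqrt{J_a}$ and $1/\sqrt{J_a}$, the key preliminary point is that, for $a \in D_R(0)$ with $R = \overline R/32$ and $|\nabla\xi| \le 4/\overline R$, one has
\[
|a \cdot \nabla \xi(x)| \le |a|\,|\nabla\xi(x)| \le \frac{\overline R}{32}\cdot \frac{4}{\overline R} = \frac18 < 1
\quad\text{for all } x \in \R^2,
\]
so that $J_a(x) \in [\tfrac78, \tfrac98]$ uniformly in $x$ and in $a \in D_R(0)$. Consequently $J_a$ takes values in a fixed compact subinterval $K \subset (0,\infty)$ on which both $t \mapsto \sqrt t$ and $t \mapsto 1/\sqrt t$ are real-analytic (in particular $C^\infty$ with all derivatives bounded on $K$). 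Then $\sqrt{J_a} = g \circ J_a$ and $1/\sqrt{J_a} = h \circ J_a$ where $g(t) = \sqrt t$, $h(t) = t^{-1/2}$, and the conclusion follows from the fact that composition with a fixed smooth function (acting pointwise, i.e.\ a Nemytskii-type superposition operator) is a smooth operation on the relevant function spaces: more concretely, since $a \mapsto J_a$ is smooth (indeed affine) into $C^\infty(\R^2)$ with image in functions valued in $K$, and $g,h$ are smooth on a neighborhood of $K$, the composition $a \mapsto g(J_a)$, $a \mapsto h(J_a)$ is smooth by the chain rule in Fréchet spaces together with the stability of $C^\infty(\R^2)$ (and of each $C^k(\overline{D_{2\overline R}(0)})$) under multiplication and composition with smooth functions.

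A self-contained way to run this last step, avoiding abstract infinite-dimensional calculus, is to differentiate explicitly: formally, $\partial_{a_i}\sqrt{J_a} = \tfrac12 J_a^{-1/2}\,\partial_{x_i}\xi$ and, by induction, every partial derivative in $a$ of order $m$ of $\sqrt{J_a}$ (resp.\ $1/\sqrt{J_a}$) is a finite sum of terms of the form $c\, J_a^{1/2 - j}\,\prod \partial_{x_{i_\ell}}\xi$ (resp.\ $c\, J_a^{-1/2-j}\,\prod \partial_{x_{i_\ell}}\xi$) with $j \le m$ and at most $m$ factors $\partial_{x_{i_\ell}}\xi$; since $J_a \ge \tfrac78$ uniformly and $\xi \in C^\infty_c$, each such term lies in $C^\infty(\R^2)$ and depends continuously on $a \in D_R(0)$ in every $C^k$-seminorm, which is exactly the statement that the maps are $C^\infty(D_R(0)\times(0,1), C^\infty(\R^2))$. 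The only mild obstacle is bookkeeping: making precise the topology on $C^\infty(\R^2)$ (as the projective limit of $C^k$-norms on compacta — only $\overline{D_{2\overline R}(0)}$ matters since $\nabla\xi$ is supported there, so $J_a \equiv 1$ outside) and verifying that the difference quotients in $a$ converge in each such seminorm; but this is routine, and the uniform lower bound $J_a \ge \tfrac78$ is what makes it go through with no difficulty.
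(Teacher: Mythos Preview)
Your proof is correct and follows essentially the same approach as the paper: observe that $J_a$ is independent of $\alpha$ and affine (hence polynomial) in $a$ with $C^\infty(\R^2)$ coefficients, then use the uniform positive lower bound on $J_a$ for $a\in D_R(0)$ to conclude that $\sqrt{J_a}$ and $1/\sqrt{J_a}$ are likewise $C^\infty$ in $a$. You supply more explicit constants and an alternative self-contained differentiation argument, but the underlying strategy is identical.
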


\begin{proof}
We first notice that $J_a$ does not depend on the variable $\alpha$. 
Therefore we only need to study the regularity with respect to $a$. 
By \eqref{eq:det}, we read that $J_a$ is a polynomial in the variable 
$a$, whose coefficients are $C^\infty(\R^2)$. Thus $J_a$ is an analytic 
function with respect to $a$ into the space $C^\infty(\R^2)$. Moreover, 
as $a \in D_R(0)$, there exists a positive constant $C$ such that $J_a(x) 
\geq C > 0$ uniformly with respect to $x \in \R^2$. This implies that 
even $a \mapsto \sqrt{J_a}$ and $a \mapsto 1/\sqrt{J_a}$ are of class 
$C^\infty(D_R(0), C^\infty(\R^2))$. From this we conclude.
\end{proof}

\subsection{The perturbed operator}

\begin{lemma} \label{lemma:new-operator}
Let $(a,\alpha) \in D_R(0) \times (0,1)$. If $u \in H^{1,a}_0(\Omega,\C)$, then 
$v = u \circ \Phi_a \in H^{1,0}_0(\Omega,\C)$, and the following relation holds
\begin{equation} \label{eq:L}
[(i \nabla + A_a^\alpha)^2 u ] \circ \Phi_a 
= [ (i \nabla + A_0^\alpha)^2 + \mathcal{L}_{(a,\alpha)} ]
(u \circ \Phi_a),
\end{equation}
where the operator $[ (i \nabla + A_a^\alpha)^2 u ] \circ \Phi_a \, : \, 
H^{1,0}_0(\Omega,\C) \to ( H^{1,0}_0(\Omega,\C) )^\star$ is defined by acting as 
\begin{equation} \label{eq:compos}
\begin{split}
& \phantom{a}_{(H^{1,0}_0(\Omega,\C))^\star}\left\langle 
[ (i \nabla + A_a^\alpha)^2 u ] \circ \Phi_a , 
w \right\rangle_{H^{1,0}_0(\Omega,\C)} \\
& \qquad \qquad \qquad \qquad \qquad
= \phantom{a}_{(H^{1,a}_0(\Omega,\C))^\star}\left\langle
(i \nabla + A_a^\alpha)^2 u, (w J_a^{-1}) \circ \Phi_a^{-1} 
\right\rangle_{H^{1,a}_0(\Omega,\C)}
\end{split}
\end{equation}
and 
where the linear operator $\mathcal{L}_{(a,\alpha)} \, : \, H^{1,0}_0(\Omega, \C) 
\to ( H^{1,0}_0(\Omega, \C) )^\star$ acts as
\begin{align*}
& \phantom{a}_{(H^{1,0}_0(\Omega,\C))^\star}\left\langle 
[ (i \nabla + A_0^\alpha)^2 + \mathcal{L}_{(a,\alpha)} ] v, 
w \right\rangle_{H^{1,0}_0(\Omega,\C)} \\
& = \int_{\Omega} \left[ (i \nabla + A_0^\alpha) v + F(a,\alpha) v \right] \cdot 
\overline{\left[ (i \nabla + A_0^\alpha) w + F(a,\alpha) w 
- i J_a^{-1} w \nabla J_a + i J_a^{-2} w ( a \cdot \nabla J_a ) \nabla \xi \right] },
\end{align*}
where
\begin{equation}\label{eq:F}
F(a,\alpha) v := (A_a^\alpha \circ \Phi_a - A_0^\alpha) v
- i J_a^{-1} (a \cdot \nabla v) \nabla \xi.  
\end{equation}
Finally the map $(a,\alpha)\in D_R(0) \times (0,1) \mapsto (i \nabla + A_0^\alpha)^2 +\mathcal{L}_{(a,\alpha)} 
\in BL( H^{1,0}_0(\Omega, \C), ( H^{1,0}_0(\Omega, \C) )^\star $ is of class 
$C^\infty$.
\end{lemma}

\begin{proof}
The fact that $u \circ \Phi_a \in H^{1,0}_0(\Omega,\C)$ if $u \in 
H^{1,a}_0(\Omega,\C)$ follows easily using the definition of the 
functional space in Subsection~\ref{subsec:functional} and \eqref{eq:xi}, 
\eqref{eq:det}.

Using the definitions of $\mathcal{L}_{(a,\alpha)}$ in \eqref{eq:L}, 
\eqref{eq:compos} and the way $(i \nabla + A_a^\alpha)^2$ acts in 
\eqref{eq:magn-lap}, it holds that
\begin{equation} \label{eq:formal}
\phantom{a}_{(H^{1,0}_0(\Omega,\C))^\star}\left\langle 
[ (i \nabla + A_0^\alpha)^2 + \mathcal{L}_{(a,\alpha)} ] v, 
w \right\rangle_{H^{1,0}_0(\Omega,\C)} 
= \int_{\Omega} (i \nabla + A_a^\alpha) u \cdot \overline{(i \nabla + A_a^\alpha) f},
\end{equation}
where we set $u = v \circ \Phi_a^{-1}$ and $f = (w J_a^{-1}) \circ \Phi_a^{-1}$. 
Performing a change of variables in the right hand side of \eqref{eq:formal} 
and using the relation
\begin{equation*}
[ (i \nabla + A_a^\alpha) u ] \circ \Phi_a 
= i \nabla v - i J_a^{-1} (a \cdot \nabla v) \nabla \xi 
+ A_0^\alpha v + (A_a^\alpha \circ \Phi - A_0^\alpha) v 
= (i \nabla + A_0^\alpha) v + F(a,\alpha) v,
\end{equation*}
which holds true because of \eqref{eq:derivative-eta-a}, we obtain that
\[
\eqref{eq:formal} =
\int_{\Omega} \left[ (i \nabla + A_0^\alpha) v + F(a,\alpha) v \right] \cdot 
\overline{\left[ (i \nabla + A_0^\alpha) z + F(a,\alpha) z \right] } \, J_a,
\]
where $z = f \circ \Phi_a = w J_a^{-1}$. Finally, the claim follows using
\[
J_a (i \nabla + A_0^\alpha) z = (i \nabla + A_0^\alpha) w - i J_a^{-1} w \nabla J_a
\]
and
\[
J_a F(a,\alpha) z = F(a,\alpha) w + i J_a^{-2} w (a \cdot \nabla J_a) \nabla \xi.
\]

Since $\xi(x)\equiv 1$ for every $x\in D_R(0)$, then 
for every $a\in D_R(0)$ and every $\alpha\in (0,1)$
$A_a^\alpha \circ \Phi_a - A_0^\alpha = 0$ and $J_a\equiv 1$ for every $x\in D_R(0)$. 
Therefore we have that 
\begin{align*}
\eqref{eq:formal} &= 
\int_{D_R(0)} (i \nabla + A_0^\alpha) v \cdot \overline{(i \nabla + A_0^\alpha) z}  \\ 
&\quad + 
\int_{\Omega\setminus D_R(0)} \left[ (i \nabla + A_0^\alpha) v + F(a,\alpha) v \right] \cdot 
\overline{\left[ (i \nabla + A_0^\alpha) z + F(a,\alpha) z \right] } \, J_a;
\end{align*}
we then see that $A_a^\alpha \circ \Phi_a - A_0^\alpha $ is a $C^1(\Omega)$ function which depends 
$C^\infty$-regularly from the parameter $(a,\alpha)\in D_R(0)\times (0,1)$
(see also the argument in Lemma~\ref{lemma:smooth-map}).
In this way, the $\mathcal L_{(a,\alpha)}$ contribution is out of the disk $D_R(0)$ where singularities may occur and
the fact that the map $(a, \alpha) \mapsto \mathcal{L}_{(a,\alpha)}$ 
is $C^\infty$ follows from this, \eqref{eq:F}
and Lemma~\ref{lemma:smooth-map}. 
\end{proof}

First we notice that 
\[
\mathcal{L}_{(0,\sfrac{1}{2})} = 0,
\]
since $J_0 = 1$ and $F(0,\tfrac{1}{2}) = 0$. In fact, it also holds that 
$\mathcal{L}_{(0,\alpha)} = 0$ for the same reasons. Since the map $(a,\alpha) 
\mapsto \mathcal{L}_{(a,\alpha)}$ is smooth, in the following we write 
for every $(b,t) \in \R^2 \times \R$
\begin{equation} \label{eq:Lprim}
\mathcal{L}'(0,\tfrac{1}{2})[(b,t)] \, : \, H^{1,0}_0(\Omega,\C) \to 
( H^{1,0}_0(\Omega,\C) )^\star,
\end{equation}
the derivative of $\mathcal{L}_{(a,\alpha)}$ at the point $(0,\tfrac{1}{2})$, 
applied to $(b,t)$. Letting $\varepsilon:= \alpha - \tfrac{1}{2}$ it holds 
that 
\[
\mathcal{L}_{(a,\alpha)} = \mathcal{L}'(0,\tfrac{1}{2})[(a,\varepsilon)] 
+ o(|(a,\varepsilon)|),
\]
as $|(a,\varepsilon)| \to 0$, so that $\mathcal{L}_{(a,\alpha)} = O(|(a,
\varepsilon)|)$ in $BL( H^{1,0}_0(\Omega, \C), ( H^{1,0}_0(\Omega, \C) )^\star )$ 
as $|(a,\varepsilon)| \to 0$.

\section{Continuity of eigenvalues with respect to \texorpdfstring{$(a,\alpha)$}{alpha}} \label{sec:continuity}

\subsection{Proof of Theorem~\ref{thm:reg-a-alpha}: continuity} \label{subsec:cont}

The proof is based on the variational characterization of the magnetic 
eigenvalues given by \eqref{eq:var-char}. We follow the same outline as 
in \cite[Theorem 3.4]{BonnaillieNorisNysTerracini2014}.

\smallskip

\begin{claim} \label{claim:limsup}
We aim at proving that if $(a_0,\alpha_0) \in \Omega \times (0,1)$ then
\[
\limsup_{(a,\alpha) \to (a_0,\alpha_0)} \lambda_k^{(a,\alpha)} \leq 
\lambda_k^{(a_0,\alpha_0)}.
\]
\end{claim}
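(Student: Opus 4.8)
The plan is to argue by the min-max characterization \eqref{eq:var-char}, producing at $(a,\alpha)$ a $k$-dimensional competitor subspace obtained by transplanting the first $k$ eigenfunctions at $(a_0,\alpha_0)$ through a diffeomorphism that moves the pole $a_0$ onto $a$. This is exactly the scheme of \cite[Theorem 3.4]{BonnaillieNorisNysTerracini2014}; the only genuinely new ingredient is the joint dependence on the circulation, which is harmless because the spaces $H^{1,a}_0(\Omega,\C)$ do not depend on $\alpha$ (for $(a,\alpha)$ close to $(a_0,\alpha_0)$ with $\alpha_0\in(0,1)$, the circulation stays away from $\Z$) and the magnetic quadratic form depends smoothly on $\alpha$.

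First I would fix a small disk $D_{\rho_0}(a_0)\subset\subset\Omega$ and a cut-off $\zeta\in C^\infty_c(D_{\rho_0}(a_0))$ with $\zeta\equiv 1$ near $a_0$, and set $\Phi_a(x)=x+(a-a_0)\,\zeta(x)$ --- the local perturbation \eqref{eq:Phi_a} relocated at $a_0$. For $|a-a_0|$ small enough this is a diffeomorphism of $\R^2$, equal to the identity outside $D_{\rho_0}(a_0)$, with $\Phi_{a_0}=\mathrm{id}$ and $\Phi_a(a_0)=a$; since $|\Phi_a^{-1}(x)-a_0|$ is comparable to $|x-a|$ near $a$, the map $u\mapsto u\circ\Phi_a^{-1}$ is a linear isomorphism from $H^{1,a_0}_0(\Omega,\C)$ onto $H^{1,a}_0(\Omega,\C)$. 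Taking $W_k=\mathrm{span}\{\varphi_1^{(a_0,\alpha_0)},\dots,\varphi_k^{(a_0,\alpha_0)}\}$, the set $W_k^{(a,\alpha)}:=\{u\circ\Phi_a^{-1}:u\in W_k\}$ is then a $k$-dimensional subspace of $H^{1,a}_0(\Omega,\C)$, so by \eqref{eq:var-char}
\[
\lambda_k^{(a,\alpha)}\le \sup_{u\in W_k\setminus\{0\}}\frac{\int_\Omega|(i\nabla+A_a^\alpha)(u\circ\Phi_a^{-1})|^2}{\int_\Omega|u\circ\Phi_a^{-1}|^2}.
\]

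Next I would perform the change of variables $x=\Phi_a(y)$ in both integrals. Using $\Phi_a'\to I$ and $J_a\to 1$ uniformly as $(a,\alpha)\to(a_0,\alpha_0)$, together with the identity $[(i\nabla+A_a^\alpha)(u\circ\Phi_a^{-1})]\circ\Phi_a=(\Phi_a')^{-T}(i\nabla u)+(A_a^\alpha\circ\Phi_a)\,u$, one shows that for each fixed $u\in W_k$
\[
\frac{\int_\Omega|(i\nabla+A_a^\alpha)(u\circ\Phi_a^{-1})|^2}{\int_\Omega|u\circ\Phi_a^{-1}|^2}\ \xrightarrow[(a,\alpha)\to(a_0,\alpha_0)]{}\ \frac{\int_\Omega|(i\nabla+A_{a_0}^{\alpha_0})u|^2}{\int_\Omega|u|^2},
\]
and, $W_k$ being finite dimensional, this convergence is uniform on $\{u\in W_k:\|u\|_{L^2(\Omega,\C)}=1\}$. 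Since the supremum over $W_k\setminus\{0\}$ of the right-hand side equals $\lambda_k^{(a_0,\alpha_0)}$, combining with the displayed min-max bound gives $\lambda_k^{(a,\alpha)}\le\lambda_k^{(a_0,\alpha_0)}+o(1)$, which is the claim.

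The delicate point --- the main obstacle --- is the uniformity of this convergence near the moving pole, where $A_a^\alpha$ is singular. I would handle it by splitting $\Omega$ into $D_\rho(a_0)$ (with $\rho\le\rho_0$) and its complement: outside a neighbourhood of $a_0$ all the relevant coefficients ($\Phi_a$, $J_a$, $A_a^\alpha\circ\Phi_a$, and $A_a^\alpha\circ\Phi_a-A_{a_0}^{\alpha_0}$) are smooth in $(y,a,\alpha)$ and converge uniformly, so that part of the integrands converges uniformly; on $D_\rho(a_0)$ one uses $|A_a^\alpha(\Phi_a(y))|\le C/|y-a_0|$ (uniformly, since $|\Phi_a(y)-a|\ge c|y-a_0|$) together with the Hardy inequality recalled in Subsection~\ref{subsec:functional} to bound that contribution by $C\int_{D_{2\rho}(a_0)}\big(|\nabla u|^2+|u|^2/|x-a_0|^2\big)$, which tends to $0$ as $\rho\to 0$ uniformly over the unit sphere of $W_k$ because every $u\in W_k$ belongs to $H^{1,a_0}_0(\Omega,\C)$. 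Letting first $\rho\to 0$ and then $(a,\alpha)\to(a_0,\alpha_0)$ yields the required uniform convergence of the Rayleigh quotients. This last step is, in substance, the local computation of Lemma~\ref{lemma:new-operator} carried out with base point $a_0$ in place of $0$ and with $\alpha$ varying, and it is where the bulk of the (otherwise routine) estimates lies.
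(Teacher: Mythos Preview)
Your argument is correct but takes a genuinely different route from the paper's own proof. The paper follows \cite[Theorem 3.4]{BonnaillieNorisNysTerracini2014} verbatim: it builds the competitor subspace as $E_k=\mathrm{span}\{e^{i\alpha(\theta_a-\theta_{a_0})}\eta_a\varphi_j\}$, where $\eta_a$ is a logarithmic cut-off vanishing on a shrinking disk around the \emph{new} pole $a$ (so that the test functions land in $H^{1,a}_0$) and the phase is a gauge converting $A_a^\alpha$ into $A_{a_0}^\alpha$ pointwise; the part of the Rayleigh quotient not already controlled by \cite{BonnaillieNorisNysTerracini2014} reduces to a remainder proportional to $\delta=\alpha-\alpha_0$, bounded via the Hardy inequality. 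You instead transplant through the diffeomorphism $\Phi_a$ of Section~\ref{sec:modified-operator} (relocated at $a_0$), which avoids cut-off and gauge entirely: where $\zeta\equiv 1$ your map is a pure translation, so $A_a^\alpha\circ\Phi_a=A_{a_0}^\alpha$ exactly there and the singularity is carried along automatically, while away from $a_0$ all coefficients are smooth in $(y,a,\alpha)$. Your approach is arguably cleaner and is in fact the very machinery the paper sets up later (Lemma~\ref{lemma:new-operator}, the operator $G_{(a,\alpha)}$) to prove the higher-regularity part of Theorem~\ref{thm:reg-a-alpha}; using it already for the continuity statement would unify the two halves of that proof. The paper's cut-off/gauge approach, on the other hand, has the advantage of quoting \cite{BonnaillieNorisNysTerracini2014} directly, so that only the $\alpha$-variation needs a new estimate.
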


\begin{proofclaim}
It will be sufficient to find a $k$-dimensional linear subspace 
$E_k \subset H^{1,a}_0(\Omega,\C)$ such that 
\[
\int_{\Omega} | (i \nabla + A_a^\alpha) \Phi |^2 
\leq ( \lambda_k^{(a_0,\alpha_0)} 
+ \eps(a,\alpha) ) \| \Phi \|_{L^2(\Omega)}^2 
\quad \text{ for every } \Phi \in E_k,
\]
where $\eps(a,\alpha) \to 0$ as $(a,\alpha) \to (a_0,\alpha_0)$.

Let $\{ \varphi_1, \ldots, \varphi_k\}$ be a set of eigenfunctions respectively 
related to $\lambda_1^{(a_0,\alpha_0)}, \ldots, \lambda_k^{(a_0,\alpha_0)}$. 
Given $r \in (0,1)$, $r := 2 |a - a_0|$, let $\eta $ be a smooth cut-off 
function given by 
\begin{equation*}
\eta(x):= 
 \begin{cases}
0 & \text{ for } 0 \leq |x| \leq r \\
\frac{\log r - \log |x|}{\log\sqrt{r}} & \text{ for } r \leq |x| \leq \sqrt{r} \\
1 & \text{ for }|x| \geq \sqrt{r}
 \end{cases}.
\end{equation*}
We denote $\eta_a(x):= \eta(x-a)$. By \cite[Lemma 3.1]{BonnaillieNorisNysTerracini2014} 
we have that 
\[
 \int_{\R^2} (|\nabla\eta_a|^2 + (1-{\eta_a}^2)) \to 0 \quad \text{ as } a \to a_0.
\]
We define 
\[
E_k := \text{span} \{ \tilde \varphi_1, \ldots, \tilde \varphi_k \} \quad 
\text{ where } \tilde \varphi_j := e^{ i \alpha ( \theta_a -\theta_{a_0})} 
\eta_a \varphi_j.
\]
By \cite[Lemma 3.3]{BonnaillieNorisNysTerracini2014}, it holds that $\tilde{\varphi}_j \in 
H^{1,a}_0(\Omega,\C)$. We consider an arbitrary combination $\Phi:= \sum_{j=1}^k 
\beta_j \tilde\varphi_j$ for $\beta_j\in\C$. We compute
\begin{align*}
 \int_{\Omega} | (i\nabla + A_a^\alpha) \Phi |^2 
& = \int_{\Omega} \Big| \sum_{j=1}^k \beta_j (i\nabla + A_a^\alpha) \tilde \varphi_j \Big|^2 
= \sum_{j,l=1}^k \beta_j \overline{\beta_l} \int_{\Omega} (i\nabla + A_{a_0}^\alpha)^2 (\eta_a\varphi_j) 
\, (\eta_a \overline{\varphi_l} ) \\
& = \sum_{j,l=1}^k \beta_j \overline{\beta_l} \int_\Omega  
 \Big( \eta_a (i\nabla + A_{a_0}^\alpha)^2 \varphi_j 
 + 2i \nabla \eta_a \cdot (i \nabla + A_{a_0}^\alpha) \varphi_j 
 - \varphi_j \Delta \eta_a \Big) \, (\eta_a \overline{\varphi_l}).
\end{align*}
Letting $\delta := \alpha - \alpha_0$, we can rewrite it as
\begin{align}  \label{eq:cont1}
\int_{\Omega} | (i\nabla + A_a^\alpha) \Phi |^2  
& = \sum_{j,l=1}^k \beta_j \overline{\beta_l} \int_{\Omega} 
\Big( \eta_a (i \nabla + A_{a_0}^{\alpha_0})^2 \varphi_j 
+ 2i \nabla \eta_a \cdot (i \nabla + A_{a_0}^{\alpha_0}) \varphi_j 
- \varphi_j \Delta \eta_a \Big) ( \eta_a \overline{\varphi_l}) \\ \label{eq:cont2}
& + \delta \sum_{j,l=1}^k \beta_j \overline{\beta_l} \int_{\Omega} 
\Big( 2 i \eta_a A_{a_0}^1 \cdot \nabla \varphi_j 
+ (2\alpha_0 + \delta) \eta_a |A_{a_0}^1|^2 \varphi_j 
+ 2 i \nabla \eta_a \cdot A_{a_0}^1 \varphi_j \Big) (\eta_a \overline{\varphi_l}).
\end{align}
From \cite[Theorem 3.4, Step 1]{BonnaillieNorisNysTerracini2014} 
it follows that the first term 
\begin{equation}\label{eq:cont11}
\eqref{eq:cont1} \leq ( \lambda_k^{(a_0,\alpha_0)} + \eps'(a) ) \| \Phi \|^2_{L^2(\Omega)} ,
\end{equation}
where $\eps'(a)\to0$ as $a \to a_0$. For what concerns the second term 
\eqref{eq:cont2}, it will be sufficient to show that the integral appearing 
in \eqref{eq:cont2} is uniformly bounded with respect to $a$. To this aim, 
we estimate
\begin{align*}
& \left| \int_\Omega {\eta_a}^2 A_{a_0}^{1} \overline{\varphi_l} \cdot \nabla \varphi_j  \right|
\leq C \left\| \frac{\varphi_l}{|x-a_0|} \right\|_{L^2(\Omega)} \, 
\| |x - a_0| A_{a_0}^{1} \|_{L^\infty(\Omega)} \, 
\| \nabla \varphi_j \|_{L^2(\Omega)}, \\
& \left| \int_\Omega {\eta_a}^2 \varphi_j \overline{\varphi_l} |A^1_{a_0}|^2 \right| 
\leq C \left\| \frac{\varphi_j}{|x-a_0|} \right\|_{L^2(\Omega)} \, 
\left\| \frac{\varphi_l}{|x-a_0|} \right\|_{L^2(\Omega)}, \\
& \left| \int_\Omega {\eta_a} A_{a_0}^1 \cdot \nabla \eta_a \varphi_j \overline{\varphi_l} \right|
\leq \left\| \frac{\varphi_j}{|x-a_0|} \right\|_{L^2(\Omega)} \, 
\| |x-a_0| A^1_{a_0} \|_{L^\infty(\Omega)} \, 
\| \nabla \eta_a \|_{L^2(\Omega)} \, \| \varphi_l \|_{L^2(\Omega)}.
\end{align*}
Those three terms are uniformly bounded with respect to $a$.
Therefore from \cite[Lemma 3.3]{BonnaillieNorisNysTerracini2014}
\begin{equation} \label{eq:cont12}
\eqref{eq:cont2} \leq \delta C \| \Phi \|^2_{L^2(\Omega)}  \to 0 \quad \text{ as } \delta = \alpha - \alpha_0 \to 0,
\end{equation}
for a constant $C > 0$ independent of $(a, \delta)$.

The proof of the first step is concluded by \eqref{eq:cont11}--\eqref{eq:cont12}.
\end{proofclaim}

\begin{claim} \label{claim:liminf}
We aim at proving that if $(a_0,\alpha_0) \in \Omega \times (0,1)$ then
\[
 \liminf_{(a,\alpha)\to(a_0,\alpha_0)} \lambda_k^{(a,\alpha)} \geq \lambda_k^{(a_0,\alpha_0)}. 
\]
\end{claim}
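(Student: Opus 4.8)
The plan is to prove the reverse inequality by transplanting the eigenfunctions of the perturbed operators onto the fixed space $H^{1,a_0}_0(\Omega,\C)$ and inserting them into the variational characterization \eqref{eq:var-char} at $(a_0,\alpha_0)$; this is the lower semicontinuity half of \cite[Theorem~3.4]{BonnaillieNorisNysTerracini2014}, with the extra circulation variation treated as in Claim~\ref{claim:limsup}. First I would pick a sequence $(a_n,\alpha_n)\to(a_0,\alpha_0)$ realizing the inferior limit, so that $\lambda_k^{(a_n,\alpha_n)}\to\ell:=\liminf_{(a,\alpha)\to(a_0,\alpha_0)}\lambda_k^{(a,\alpha)}$; by Claim~\ref{claim:limsup} the numbers $\lambda_1^{(a_n,\alpha_n)}\le\dots\le\lambda_k^{(a_n,\alpha_n)}$ stay bounded. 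For each $n$ I would fix an $L^2(\Omega,\C)$-orthonormal system $\varphi_1^n,\dots,\varphi_k^n$ of eigenfunctions of $(i\nabla+A_{a_n}^{\alpha_n})^2$ associated with them. From $\|(i\nabla+A_{a_n}^{\alpha_n})\varphi_j^n\|_{L^2}^2=\lambda_j^{(a_n,\alpha_n)}$, the Hardy inequality recalled in Subsection~\ref{subsec:functional}, the diamagnetic inequality and local elliptic estimates together with the local theory of magnetic eigenfunctions at the pole, the quantities $\|\nabla\varphi_j^n\|_{L^2(\Omega)}$, $\|\varphi_j^n\|_{L^\infty(\Omega)}$ and $\big\|\varphi_j^n/|x-a_n|\big\|_{L^2(\Omega)}$ are bounded uniformly in $n$.

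Next I would set $r_n:=2|a_n-a_0|\to0$ and take $\eta_n$ to be the logarithmic cut-off of \cite[Lemma~3.1]{BonnaillieNorisNysTerracini2014}, equal to $0$ on $D_{r_n}(a_0)$, to $1$ off $D_{\sqrt{r_n}}(a_0)$, with $\int_{\R^2}(|\nabla\eta_n|^2+(1-\eta_n^2))\to0$, and define
\[
\tilde\varphi_j^n:=e^{\,i\alpha_n(\theta_{a_0}-\theta_{a_n})}\,\eta_n\,\varphi_j^n,\qquad j=1,\dots,k.
\]
Since $|a_n-a_0|=r_n/2$, the disk $D_{r_n}(a_0)$ contains $a_n$ together with the segment $[a_0,a_n]$, which is exactly the (segment-shaped) discontinuity locus of the gauge phase $e^{i\alpha_n(\theta_{a_0}-\theta_{a_n})}$; hence $\tilde\varphi_j^n$ extends to a single-valued function on $\Omega$ belonging to $H^{1,a_0}_0(\Omega,\C)$, as in \cite[Lemma~3.3]{BonnaillieNorisNysTerracini2014}. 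Using $A_a^\alpha=\alpha\nabla\theta_a$ and the gauge rule of Definition~\ref{def:gauge-invariance} one gets, for $g^n:=\sum_{j}\beta_j\varphi_j^n$ and $\Phi:=\sum_{j}\beta_j\tilde\varphi_j^n=e^{i\alpha_n(\theta_{a_0}-\theta_{a_n})}\eta_ng^n$,
\[
(i\nabla+A_{a_0}^{\alpha_0})\Phi=e^{\,i\alpha_n(\theta_{a_0}-\theta_{a_n})}\big[(i\nabla+A_{a_n}^{\alpha_n})(\eta_ng^n)-(\alpha_n-\alpha_0)\,\eta_n g^n\,A_{a_0}^1\big],
\]
so that $\int_\Omega|(i\nabla+A_{a_0}^{\alpha_0})\Phi|^2$ splits into a leading term and two terms carrying the factor $\alpha_n-\alpha_0$, to be controlled as in the proof of Claim~\ref{claim:limsup}.

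The leading term $\int_\Omega|(i\nabla+A_{a_n}^{\alpha_n})(\eta_ng^n)|^2$ is of the type treated in the lower semicontinuity step of \cite[Theorem~3.4]{BonnaillieNorisNysTerracini2014} at fixed circulation $\alpha_n$: writing $\eta_n^2=1-(1-\eta_n^2)$, using the eigenvalue equation and the $L^2$-orthonormality of the $\varphi_j^n$, and absorbing the cut-off errors by means of $\int_{\R^2}(|\nabla\eta_n|^2+(1-\eta_n^2))\to0$ and the uniform bounds above, it is $\le(\lambda_k^{(a_n,\alpha_n)}+o(1))\sum_j|\beta_j|^2$. The two remaining terms carry $|\alpha_n-\alpha_0|$, respectively $|\alpha_n-\alpha_0|^2$; bounding $\||x-a_0|\,A_{a_0}^1\|_{L^\infty(\Omega)}<\infty$ and using that $|x-a_0|$ and $|x-a_n|$ are comparable on $\{\eta_n\neq0\}\subset\{|x-a_0|\ge r_n\}$ in order to transfer the uniform Hardy bound, they are $o(1)\sum_j|\beta_j|^2$, exactly as in the three inequalities following \eqref{eq:cont2}. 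In the same way $\|\Phi\|_{L^2(\Omega)}^2=\int_\Omega\eta_n^2|g^n|^2=(1+o(1))\sum_j|\beta_j|^2$ and the Gram matrix $(\langle\tilde\varphi_i^n,\tilde\varphi_j^n\rangle_{L^2})_{i,j}$ converges to the identity, so for $n$ large $E_k^n:=\text{span}\{\tilde\varphi_1^n,\dots,\tilde\varphi_k^n\}$ is a $k$-dimensional subspace of $H^{1,a_0}_0(\Omega,\C)$ on which $\int_\Omega|(i\nabla+A_{a_0}^{\alpha_0})\Phi|^2\le(\lambda_k^{(a_n,\alpha_n)}+o(1))\|\Phi\|_{L^2(\Omega)}^2$. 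Plugging $E_k^n$ into \eqref{eq:var-char} gives $\lambda_k^{(a_0,\alpha_0)}\le\lambda_k^{(a_n,\alpha_n)}+o(1)$, and letting $n\to\infty$ yields $\lambda_k^{(a_0,\alpha_0)}\le\ell$, which is the claim.

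The main obstacle will be the uniform-in-$n$ control of the error terms generated by the cut-off $\eta_n$ concentrating at $a_0$ — concretely, the uniform bounds on $\varphi_j^n/|x-a_n|$ and on $\|\varphi_j^n\|_{L^\infty}$, together with the uniform energy decay $\int_{D_\rho(a_n)}|(i\nabla+A_{a_n}^{\alpha_n})\varphi_j^n|^2\to0$ as $\rho\to0$. These all rest on the normalization $\|\varphi_j^n\|_{L^2}=1$, the boundedness of the eigenvalues, the diamagnetic inequality and interior elliptic regularity, and the local behaviour of magnetic eigenfunctions at the pole, whose vanishing order $\min_{m\in\Z}|m-\alpha_n|$ stays bounded away from $0$ because $\alpha_0\notin\Z$. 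Once these are in hand, the computation is a transcription of \eqref{eq:cont1}--\eqref{eq:cont12} with the roles of $(a_0,\alpha_0)$ and $(a_n,\alpha_n)$ interchanged, plus the circulation-variation estimates already used for Claim~\ref{claim:limsup}.
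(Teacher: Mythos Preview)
Your argument is correct but takes a genuinely different route from the paper. The paper proves the lower bound by a \emph{compactness} argument: it takes an $L^2$-orthonormal system $\varphi_1^{(a_n,\alpha_n)},\dots,\varphi_k^{(a_n,\alpha_n)}$, uses Claim~\ref{claim:limsup} to get a uniform $H^1_0$ bound, extracts weak $H^1_0$ and strong $L^2$ limits $\varphi_j^\ast$, checks via Fatou that $\varphi_j^\ast\in H^{1,a_0}_0(\Omega,\C)$, passes to the limit in the weak equation against test functions in $C^\infty_c(\Omega\setminus\{a_0\})$ to identify the $\varphi_j^\ast$ as orthogonal eigenfunctions at $(a_0,\alpha_0)$, and finally inserts their span into \eqref{eq:var-char}. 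No cut-off, no gauge transplantation, no $L^\infty$ bound on the $\varphi_j^n$ are needed.

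You instead mirror the \emph{limsup} proof: transplant the perturbed eigenfunctions to $H^{1,a_0}_0(\Omega,\C)$ via the phase $e^{i\alpha_n(\theta_{a_0}-\theta_{a_n})}$ and the logarithmic cut-off $\eta_n$ centered at $a_0$, and then estimate the Rayleigh quotient at $(a_0,\alpha_0)$ directly. Your gauge identity is correct, and the comparability $|x-a_0|\sim|x-a_n|$ on $\{\eta_n\neq0\}$ lets you transfer the Hardy control; together with the uniform $L^\infty$ bound (obtained from Kato's inequality and subsolution estimates) the cut-off errors and the $(\alpha_n-\alpha_0)$-terms are $o(1)$ exactly as in \eqref{eq:cont1}--\eqref{eq:cont12}. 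Two minor remarks: your reference to ``the lower semicontinuity step of \cite[Theorem~3.4]{BonnaillieNorisNysTerracini2014}'' is a slip, since that step is also done by compactness there --- what you actually use are the cut-off computations from the upper-bound step; and the ``uniform energy decay'' you list among the obstacles is not actually needed, since $\eta_n^2\le1$ already gives $\int_\Omega\eta_n^2|(i\nabla+A_{a_n}^{\alpha_n})g^n|^2\le\lambda_k^{(a_n,\alpha_n)}\sum_j|\beta_j|^2$. The trade-off is that your approach is pleasantly symmetric to Claim~\ref{claim:limsup} and avoids passing to the limit in the equation, at the price of invoking the uniform $L^\infty$ bound and the distance comparability, which the paper's compactness argument sidesteps entirely.
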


\begin{proofclaim}
Consider $\{ \varphi_1^{(a,\alpha)}, \ldots, \varphi_k^{(a,\alpha)} \}$ be a set 
of orthonormalized eigenfunctions in $L^2(\Omega,\C)$ and 
respectively related to $\lambda_1^{(a,\alpha)}, \ldots, \lambda_k^{(a,\alpha)}$.

We first observe that Claim~\ref{claim:limsup} implies for 
$j = 1, \ldots, k$,
\[
\| \varphi_j^{(a,\alpha)} \|_{H^{1}_0(\Omega,\C)} 
\leq C \int_{\Omega} | (i\nabla + A_a^{\alpha}) \varphi_j|^2 
\leq C \lambda_j^{(a_0,\alpha_0)},
\]
for $(a,\alpha)$ sufficiently close to $(a_0,\alpha_0)$. 
Therefore there exist a sequence $(a_n,\alpha_n) \to (a_0,\alpha_0)$, as $n \to + \infty$, 
and functions $\varphi_j^* \in H^1_0(\Omega,\C)$ such that 
\begin{align*}
\begin{aligned}
\varphi_j^{(a_n,\alpha_n)} \rightharpoonup \varphi_j^* 
\quad & \text{ in } H^1_0(\Omega,\C) \\
\varphi_j^{(a_n,\alpha_n)} \to \varphi_j^* 
\quad & \text{ in } L^2(\Omega,\C)
\end{aligned} 
\quad \text{ as } n \to + \infty,
\end{align*}
and $\int_\Omega \varphi_j^*\overline\varphi_l^* = 0$ for $j\neq l$. 
Moreover, by Fatou's Lemma and Claim~\ref{claim:limsup} we have for 
any $j = 1, \ldots, k$,
\begin{align*}
\left\| \frac{\varphi_j^*}{|x - a_0|} \right\|^2_{L^2(\Omega,\C)} 
& \leq \liminf_{n \to + \infty} 
   \left\| \frac{\varphi_j^{(a_n,\alpha_n)}}{|x - a_n|} \right\|^2_{L^2(\Omega,\C)} 
\leq C \liminf_{n \to + \infty} \int_{\Omega} |(i \nabla + A_{a_n}^{\alpha_n}) 
\varphi_j^{(a_n,\alpha_n)} |^2 
\leq C \lambda_j^{(a_0,\alpha_0)},
\end{align*}
so that $\varphi_j^* \in H^{1,a_0}_0(\Omega,\C)$. 

Up to a diagonal process, with a little abuse of notation, let us assume that 
for any $j = 1, \ldots, k$ 
\[
\lim_{n \to + \infty} \lambda_j^{(a_n,\alpha_n)} = \lambda_j^* 
= \liminf_{(a,\alpha) \to (a_0,\alpha_0)} \lambda_j^{(a,\alpha)}. 
\]
Thus, given a test function $\phi \in C^\infty_0(\Omega\setminus\{a_0\})$, 
if $n$ is large enough to have $a_n \not\in \text{supp}\phi$, 
we can pass to the limit along the above subsequence in the following expression
\[
\int_{\Omega} \varphi_j^{(a_n,\alpha_n)} \, \overline{ (i\nabla + A_{a_n}^{\alpha_n})^2 \phi}
= \lambda_j^{(a_n,\alpha_n)} \int_\Omega \varphi_j^{(a_n,\alpha_n)} \overline{\phi}
\]
to obtain
\[
\int_\Omega (i\nabla + A_{a_0}^{\alpha_0})^2 \varphi_j^* \overline{\phi} 
= \lambda_j^* \int_\Omega \varphi_j^* \overline{\phi}. 
\]
By density, this is also valid for every $\phi \in H^{1,a_0}_0(\Omega,\C)$, and 
therefore the orthogonality between the $\varphi_j^*$ follows.

Thus, we obtain that
\begin{align*}
\lambda_k^{(a_0,\alpha_0)} 
& \leq \sup_{(c_1, \ldots ,c_k) \in \C \setminus\{0\} }
\frac{\int_\Omega \left| (i \nabla + A_{a_0}^{\alpha_0}) 
\Big( \sum_{j = 1}^k c_j \varphi_j^* \Big) \right|^2}
{\int_\Omega \left| \sum_{j=1}^k c_j \varphi_j^* \right|^2} 
=  \sup_{(c_1, \ldots ,c_k)\in \C \setminus\{0\} } 
\frac{ \sum_{j=1}^k |c_j|^2 \lambda_j^*}{ \sum_j |c_j|^2} \leq \lambda_k^*,
\end{align*}
and Claim~\ref{claim:liminf} follows.
\end{proofclaim}
The proof is thereby completed combining Claim~\ref{claim:limsup} and \ref{claim:liminf}.

\resetclaim

\begin{remark}
Following the scheme of \cite[Section~4]{BonnaillieNorisNysTerracini2014} it is also possible to prove
that for any $k\in \N$ the map $(a,\alpha)\mapsto \lambda_k^{(a,\alpha)}$ is continuous up to the boundary of
$\Omega\times (0,1)$.
\end{remark}

\subsection{Proof of Theorem~\ref{thm:reg-a-alpha}: higher regularity for simple eigenvalues} \label{subsec:reg}

Fix $\alpha_0 \in (0,1)$ such that $\lambda_k^{(0,\alpha_0)}$ 
is a simple eigenvalue. Throughout this subsection, we will 
treat for simplicity the space $H^{1,0}_0(\Omega,\C)$ as a 
\emph{real} Hilbert space endowed with the scalar product
\[
(u,v) := \Re \bigg( \int_\Omega (i\nabla + A_0^{\alpha_0}) u 
\cdot \overline{ (i\nabla + A_0^{\alpha_0}) v} \bigg).
\]
To emphasize the fact that $H^{1,0}_0(\Omega,\C)$ is meant as a 
vector space over $\R$, we denote it as $H^{1,0}_{0,\R}(\Omega,\C)$. 
The main difference lies in the fact that if $u \in H^{1,0}_{0,\R}
(\Omega,\C)$, then $u$ and $iu$ are linearly independent, which 
was not the case in the complex vector space. We also write 
$(H^{1,0}_{0,\R}(\Omega,\C))^\star$ the \emph{real} dual space of 
$H^{1,0}_{0,\R}(\Omega,\C)$.

Let us consider the function $F \, : \, \Omega \times (0,1) \times 
H^{1,0}_{0,\R}(\Omega,\C) \times \C  \to (H^{1,0}_{0,\R}(\Omega,\C))^\star 
\times  \R \times \R$ sending $(a, \alpha, \varphi, \lambda)$ on
\begin{equation} \label{def_operatore_F}
\left(
(i\nabla + A_0^\alpha)^2 \varphi 
+ \mathcal L_{(a,\alpha)} \varphi 
- \lambda \varphi, 
\, \Re \int_{\Omega} \varphi \, 
\big( \overline{\varphi_k^{(0,\alpha)} \circ \Phi_a } \big) \, J_a - 1 ,
\, \mathfrak{Im} \int_{\Omega} \varphi \, 
\big( \overline{\varphi_k^{(0,\alpha)} \circ \Phi_a } \big) \, J_a
\right),
\end{equation}
where $(i\nabla + A_0^\alpha)^2 \varphi - \lambda \varphi \in 
(H^{1,0}_{0,\R}(\Omega,\C))^\star$ acts as 
\[
\phantom{a}_{(H^{1,0}_{0,\R}(\Omega,\C))^\star}\Big\langle 
(i \nabla + A_0^\alpha)^2 \varphi - \lambda \varphi 
, u \Big\rangle_{\! H^{1,0}_{0,\R}(\Omega,\C)} \! \! 
= \Re \left( {\textstyle{ \int_{\Omega}(i \nabla + A_0^\alpha ) \varphi 
\cdot \overline{ (i \nabla + A_0^\alpha) u }
- \lambda \int_{\Omega} \varphi \overline{u} }} \right)
\]
for all $\varphi\in H^{1,0}_{0,\R}(\Omega,\C)$. We notice that in 
\eqref{def_operatore_F} $\C$ is also meant as a vector space over 
$\R$. 

We have that for any $\alpha_0 \in (0,1)$
\[
F(0,  \alpha_0, \varphi_k^{(0,\alpha_0)}, \lambda_k^{(0,\alpha_0)}) 
= (0,0,0),
\] 
since $\Phi_0$ is the identity, $J_0 = 1$ and $\mathcal{L}_{(0,\alpha_0)}=0$. 
Moreover, by direct calculations it is easy to verify that $F$ is $C^\infty$ 
with respect to $(\varphi,\lambda)$, at $(0,\alpha_0,\varphi_k^{(0,\alpha_0)}, 
\lambda_k^{(0,\alpha_0)})$ and moreover the explicit derivative of $F$ at 
$(0,\alpha_0,\varphi_k^{(0,\alpha_0)},\lambda_k^{(0,\alpha_0)})$, applied to 
$(\varphi,\lambda)$, is given by
\begin{align*}
& dF_{(\varphi,\lambda)}(0,\alpha_0,\varphi_k^{(0,\alpha_0)},
\lambda_k^{(0,\alpha_0)})[(\varphi,\lambda)] \\
& \qquad =  \left( 
(i\nabla + A_0^{\alpha_0})^2 \varphi 
- \lambda_k^{(0,\alpha_0)} \varphi 
- \lambda \varphi_k^{(0,\alpha_0)}, \,
\Re \int_{\Omega} \varphi \, \overline{\varphi_k^{(0,\alpha_0)}}, \, 
\mathfrak{Im} \int_{\Omega} \varphi \, \overline{\varphi_k^{(0,\alpha_0)}} 
\right)
\end{align*}
for every $(\varphi,\lambda)\in  H^{1,0}_{0,\R}(\Omega,\C) \times \C$.

It remains to prove that $dF_{(\varphi,\lambda)}(0,\alpha_0,\varphi_k^{(0,\alpha_0)},
\lambda_k^{(0,\alpha_0)}) \, : \, H^{1,0}_{0,\R}(\Omega,\C) \times \C \to 
(H^{1,0}_{0,\R}(\Omega),\C)^\star \times \R \times \R$ is invertible. For any 
$\lambda \in \C$, we define 
\[
T_\lambda \, : \, H^{1,0}_{0,\R}(\Omega,\C) \to (H^{1,0}_{0,\R}(\Omega,\C))^\star 
\quad \text{ as } \quad 
T_\lambda u = \lambda u.
\]
We define as well the Riesz isomorphism $\mathcal R \, : \, 
(H^{1,0}_{0,\R}(\Omega,\C))^\star \to H^{1,0}_{0,\R}(\Omega,\C)$, 
and $\mathcal I$ the standard identification of $\R \times \R$ 
onto $\C$. By exploiting the compactness of $T_\lambda$, it is 
easy to prove that $(\mathcal R \times \mathcal I) \circ dF_{(\varphi,\lambda)}
(0, \alpha_0, \varphi_k^{(0,\alpha_0)}, \lambda_k^{(0,\alpha_0)}) \in BL
(H^{1,0}_{0,\R}(\Omega) \times \C,H^{1,0}_{0,\R}(\Omega) \times \C)$ is a 
compact perturbation of the identity. Indeed, since by definition
\[
\phantom{a}_{(H^{1,0}_{0,\R}(\Omega))^\star} \big\langle 
(i\nabla + A_0^{\alpha_0})^2 \varphi , 
u \big\rangle_{H^{1,0}_{0,\R}(\Omega)} 
= \Re \left( \int_{\Omega} (i \nabla + A_0^{\alpha_0}) \varphi 
\cdot \overline{ (i \nabla + A_0^{\alpha_0})u} \right) 
= \big( \varphi ,u \big)_{H^{1,0}_{0,\R}(\Omega,\C)},
\]
we have that
$\mathcal{R} \big( (i\nabla + A_0^{\alpha_0})^2 \varphi 
- \lambda_k^{(0,\alpha_0)} \varphi - \lambda \varphi_k^{(0,\alpha_0)} \big) 
= \varphi - ( \mathcal{R} \circ T_{\lambda_k^{(0,\alpha_0)}} ) \varphi 
- ( \mathcal{R} \circ T_\lambda ) \varphi_k^{(0,\alpha_0)}$, which has the form 
identity plus a compact perturbation (composition of the Riesz isomorphism and 
the compact operator $T_\lambda$). The Fredholm alternative tells us then 
that $dF_{(\varphi,\lambda)}(0, \alpha_0, \lambda_k^{(0,\alpha_0)}, 
\varphi_k^{(0,\alpha_0)})$ is invertible if and only if it is injective. 
Therefore to conclude the proof, it is enough to prove that
$\ker ( dF_{(\varphi,\lambda)} (0, \alpha_0, \lambda_k^{(0,\alpha_0)}, 
\varphi_k^{(0,\alpha_0)})) = \{(0,0)\}$.

Let $(\varphi,\lambda) \in H^{1,0}_{0,\R}(\Omega,\C) \times \C$ be such 
that 
\begin{equation} \label{eq:inv}
\begin{split}
& \qquad (i \nabla + A_0^{\alpha_0})^2 \varphi 
- \lambda_k^{(0,\alpha_0)} \varphi 
- \lambda \varphi_k^{(0,\alpha_0)} = 0, \\
& \Re \int_{\Omega} \varphi \, \overline{\varphi_k^{(0,\alpha_0)}} = 0 
\quad \text{ and } \quad 
\mathfrak{Im} \int_{\Omega} \varphi \, \overline{\varphi_k^{(0,\alpha_0)}} = 0.
\end{split}
\end{equation}
The first equation means that
\[
\Re \int_{\Omega} (i \nabla + A_0^{\alpha_0} ) \varphi 
\cdot \overline{ (i \nabla + A_0^{\alpha_0} ) u } 
- \lambda_k^{(0,\alpha_0)} \varphi \, \overline{u} 
- \lambda \varphi_k^{(0,\alpha_0)} \, \overline{u} = 0
\] 
for all $u\in H^{1,0}_{0,\R}(\Omega,\C)$. Considering in turn 
$u = \varphi_k^{(0,\alpha_0)}$ and $u = i \varphi_k^{(0,\alpha_0)}$ 
into the previous identity leads respectively to $\Re \lambda = 0$ 
and $\mathfrak{Im}\lambda = 0$. Then the first equation in \eqref{eq:inv} 
becomes $( i \nabla + A_0^{\alpha_0})^2 \varphi - \lambda_k^{(0,\alpha_0)} 
\varphi = 0$ in $(H^{1,0}_{0,\R}(\Omega,\C))^\star$, which, 
by assumption of simplicity of $\lambda_k^{(0,\alpha_0)}$, implies that 
$\varphi = \gamma \varphi_k^{(0,\alpha_0)}$ for some $\gamma \in \C$. 
The second and third equations in \eqref{eq:inv} imply respectively that 
$\Re \gamma = 0$ and $\mathfrak{Im} \gamma = 0$, so that $\varphi=0$. Then 
we conclude that the only element in the kernel of $dF_{(\varphi,\lambda)}
(0,\alpha_0,\lambda_k^{(0,\alpha_0)},\varphi_k^{(0,\alpha_0)})$ is $(0,0) 
\in H^{1,0}_{0,\R}(\Omega,\C) \times \C$.

The Implicit Function Theorem therefore applies and the maps $(a, \alpha) 
\mapsto (\varphi_k^{(a,\alpha)} \circ \Phi_a, \lambda_k^{(a,\alpha)}) \in 
H^{1,0}_{0,\R}(\Omega,\C) \times \C $ are of class $C^{\infty}$ locally in 
a neighborhood of $(0,\alpha_0)$.

\section{The spectrally equivalent operators} \label{sec:equivalent-operator}

As in \cite{LupoMicheletti1993} we define $\gamma_a \, : \, L^2(\Omega,\C) 
\to L^2(\Omega,\C)$ by
\begin{equation}\label{eq:gamma_a}
\gamma_a (u) = \sqrt{J_a} (u \circ \Phi_a),
\end{equation}
where $J_a$ is defined in \eqref{eq:det}. Such a transformation $\gamma_a$ 
defines an isomorphism preserving the scalar product in $L^2(\Omega,\C)$. 
Indeed,
\[
\int_{\Omega} u(y) \overline{v(y)} \, \mathrm{d}y 
= \int_{\Omega} u(\Phi_a(x)) \overline{v(\Phi_a(x))} J_a(x) \, \mathrm{d}x 
= \int_{\Omega} \gamma_a(u)(x) \overline{\gamma_a(v)(x)} \, \mathrm{d}x.
\]
Since $\Phi_a$ and $\sqrt{J_a}$ are $C^\infty$, $\gamma_a$ defines an algebraic 
and topological isomorphism of $H^{1,a}_0(\Omega,\C)$ in $H^{1,0}_0(\Omega,\C)$ 
and inversely with $\gamma_a^{-1}$, see \cite[Lemma 2]{Micheletti1972}, 
\cite{LupoMicheletti1995-2}. We notice that $\gamma_a^{-1}$ writes
\[
\gamma_a^{-1} (u) = \left(\sqrt{J_a \circ \Phi_a^{-1}} \right)^{-1} 
( u \circ \Phi_a^{-1}).
\]

With a little abuse of notation we define the application $\gamma_a \, : \, 
( H^{1,a}_0(\Omega,\C) )^\star \to (H^{1,0}_0(\Omega,\C))^\star$ in 
such a way that 
\begin{equation} \label{eq:gamma-a-dual}
\phantom{a}_{( H^{1,0}_0(\Omega,\C) )^\star }\langle 
\gamma_a(f), v \rangle_{H^{1,0}_0(\Omega,\C)} 
= \phantom{a}_{( H^{1,0}_0(\Omega,\C) )^\star }\langle 
f, \gamma_a^{-1}(v)
\rangle_{H^{1,a}_0(\Omega,\C)},
\end{equation}
for any $f \in ( H^{1,a}_0(\Omega,\C) )^\star$, and inversely for 
$\gamma_a^{-1} \, : \, ( H^{1,0}_0(\Omega,\C) )^\star \to (H^{1,a}_0(\Omega,\C))^\star$.

\subsection{Spectral equivalent operator to \texorpdfstring{$(i\nabla + A_a^\alpha)^2$}{direct operator}}

We would like to find an operator spectrally equivalent to 
$(i \nabla + A_a^\alpha)^2$ but having a domain of definition 
independent of $(a,\alpha)$. The parameter $\alpha$ does not 
create any problem since the functional spaces introduced in 
Subsection~\ref{subsec:functional} are independent of $\alpha$. 
We therefore need only to perform a transformation moving the 
pole $a$ to the fixed point $0$. For this, for every $(a,\alpha) 
\in D_R(0) \times (0,1)$, we define the new operator $G_{(a,\alpha)} 
\, : \, H^{1,0}_0(\Omega,\C) \to (H^{1,0}_0(\Omega,\C))^\star$ 
by the following relation
\begin{equation} \label{eq:operator-G-a}
G_{(a,\alpha)} \circ \gamma_a = \gamma_a \circ (i \nabla + A_a^\alpha)^2,
\end{equation}
being $\gamma_a$ defined in \eqref{eq:gamma_a} and \eqref{eq:gamma-a-dual}.
By \cite[Lemma 3]{Micheletti1972} the domain of definition of 
$G_{(a,\alpha)}$ is given by $\gamma_a(H^{1,a}_0(\Omega, \C))$, 
it coincides with $H^{1,0}_0(\Omega,\C)$. Moreover, $G_{(a,\alpha)}$ 
and $(i \nabla + A_a^\alpha)^2$ are \emph{spectrally equivalent}, 
in particular they have the same eigenvalues with the same multiplicity. 

The following lemma gives a more explicit expression to the operator $G_{(a,\alpha)}$.

\begin{lemma} \label{lemma:operator-G-a}
Let $(a, \alpha) \in D_R(0) \times (0,1)$ and let $G_{(a,\alpha)}$ be defined 
in \eqref{eq:operator-G-a}. Then 
\[
G_{(a,\alpha)} v = \sqrt{J_a} \, [ (i \nabla + A_0^{\alpha})^2 
+ \mathcal{L}_{(a,\alpha)} ] 
( v \, (\sqrt{J_a})^{-1}),
\]
meaning that
\begin{equation*}
\begin{split}
& \phantom{a}_{( H^{1,0}_0(\Omega,\C))^\star}\langle 
G_{(a,\alpha)} v, w \rangle_{H^{1,0}_0(\Omega,\C)}  \\
& \qquad \qquad \qquad = \phantom{a}_{( H^{1,0}_0(\Omega,\C))^\star}\left\langle 
[ (i \nabla + A_0^{\alpha})^2 + \mathcal{L}_{(a,\alpha)} ] 
( v \, (\sqrt{J_a})^{-1}),
\sqrt{J_a} w \right\rangle_{H^{1,0}_0(\Omega,\C)} .
\end{split}
\end{equation*}
Moreover, $(a,\alpha)  \mapsto G_{(a,\alpha)}$ is $C^\infty (D_R(0) 
\times (0,1), BL( H^{1,0}_0(\Omega,\C), (H^{1,0}_0(\Omega,\C))^\star )$.
\end{lemma}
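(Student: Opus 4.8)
The plan is to read $G_{(a,\alpha)}$ off its defining relation \eqref{eq:operator-G-a}, which gives $G_{(a,\alpha)}=\gamma_a\circ(i\nabla+A_a^\alpha)^2\circ\gamma_a^{-1}$, and then to feed in the identities of Lemma~\ref{lemma:new-operator}. First I would fix $v\in H^{1,0}_0(\Omega,\C)$, set $u:=\gamma_a^{-1}(v)\in H^{1,a}_0(\Omega,\C)$, and record from \eqref{eq:gamma_a} that $u\circ\Phi_a=v\,(\sqrt{J_a})^{-1}$. Applying the dual pairing \eqref{eq:gamma-a-dual} with $f=(i\nabla+A_a^\alpha)^2u$ then gives, for every test function $w\in H^{1,0}_0(\Omega,\C)$,
\[
{}_{(H^{1,0}_0(\Omega,\C))^\star}\big\langle G_{(a,\alpha)}v,\,w\big\rangle_{H^{1,0}_0(\Omega,\C)}
={}_{(H^{1,a}_0(\Omega,\C))^\star}\big\langle (i\nabla+A_a^\alpha)^2u,\,\gamma_a^{-1}(w)\big\rangle_{H^{1,a}_0(\Omega,\C)}.
\]

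The key observation is that $\gamma_a^{-1}(w)$ is \emph{exactly} the test function appearing in \eqref{eq:compos} for the right choice of argument. Indeed, from the explicit form of $\gamma_a^{-1}$ in Section~\ref{sec:equivalent-operator} one computes $\gamma_a^{-1}(w)=\big((\sqrt{J_a}\,w)\,J_a^{-1}\big)\circ\Phi_a^{-1}$, which is the function $(w'J_a^{-1})\circ\Phi_a^{-1}$ of \eqref{eq:compos} with $w':=\sqrt{J_a}\,w$. Hence \eqref{eq:compos}, followed by the identity \eqref{eq:L} of Lemma~\ref{lemma:new-operator} together with $u\circ\Phi_a=v\,(\sqrt{J_a})^{-1}$, yields
\[
\begin{aligned}
\big\langle (i\nabla+A_a^\alpha)^2u,\,\gamma_a^{-1}(w)\big\rangle
&=\big\langle [(i\nabla+A_a^\alpha)^2u]\circ\Phi_a,\ \sqrt{J_a}\,w\big\rangle\\
&=\big\langle [(i\nabla+A_0^\alpha)^2+\mathcal{L}_{(a,\alpha)}]\big(v\,(\sqrt{J_a})^{-1}\big),\ \sqrt{J_a}\,w\big\rangle,
\end{aligned}
\]
where the first pairing is in $H^{1,a}_0(\Omega,\C)$ and the other two in $H^{1,0}_0(\Omega,\C)$. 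Combining the two displays gives precisely the weak formulation in the statement, i.e.\ $G_{(a,\alpha)}v=\sqrt{J_a}\,[(i\nabla+A_0^\alpha)^2+\mathcal{L}_{(a,\alpha)}]\big(v\,(\sqrt{J_a})^{-1}\big)$.

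For the smoothness claim I would factor $G_{(a,\alpha)}=M^{\mathrm{out}}_a\circ\big((i\nabla+A_0^\alpha)^2+\mathcal{L}_{(a,\alpha)}\big)\circ M^{\mathrm{in}}_a$, where $M^{\mathrm{in}}_a$ is multiplication by $(\sqrt{J_a})^{-1}$ on $H^{1,0}_0(\Omega,\C)$ and $M^{\mathrm{out}}_a$ is the corresponding (weak) multiplication by $\sqrt{J_a}$ on $(H^{1,0}_0(\Omega,\C))^\star$. Any $m\in C^\infty(\R^2)$ with $m$ and $1/m$ bounded on $\overline\Omega$ induces bounded multiplication operators on $H^{1,0}_0(\Omega,\C)$ and on its dual — this follows from the product rule and the explicit weighted norm of Subsection~\ref{subsec:functional} — and by Lemma~\ref{lemma:smooth-map} the maps $(a,\alpha)\mapsto\sqrt{J_a}$ and $(a,\alpha)\mapsto(\sqrt{J_a})^{-1}$ are $C^\infty$ into $C^\infty(\R^2)$, so $(a,\alpha)\mapsto M^{\mathrm{in}}_a$ and $(a,\alpha)\mapsto M^{\mathrm{out}}_a$ are $C^\infty$ into the relevant operator spaces. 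Since $(a,\alpha)\mapsto(i\nabla+A_0^\alpha)^2+\mathcal{L}_{(a,\alpha)}$ is $C^\infty$ by Lemma~\ref{lemma:new-operator} and composition of bounded operators is continuous and bilinear, the composite $(a,\alpha)\mapsto G_{(a,\alpha)}$ is $C^\infty$ as well. I expect the only delicate points to be bookkeeping ones: choosing $w'=\sqrt{J_a}\,w$ so that the factor $J_a^{-1}$ in \eqref{eq:compos} recombines with $\sqrt{J_a}$ to reproduce $\gamma_a^{-1}(w)$ exactly, and checking the uniform boundedness of the multiplication operators, which rests on the uniform positivity $J_a\ge C>0$ on $D_R(0)$ from Lemma~\ref{lemma:smooth-map}.
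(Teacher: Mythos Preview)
Your argument is correct and follows essentially the same route as the paper: both unwind $G_{(a,\alpha)}=\gamma_a\circ(i\nabla+A_a^\alpha)^2\circ\gamma_a^{-1}$ via the dual pairing \eqref{eq:gamma-a-dual}, identify the resulting test function with the one in \eqref{eq:compos}, and then apply \eqref{eq:L} together with $u\circ\Phi_a=v(\sqrt{J_a})^{-1}$. Your treatment of the $C^\infty$-regularity via the factorization into multiplication operators is somewhat more explicit than the paper's, which simply cites Lemmas~\ref{lemma:smooth-map} and~\ref{lemma:new-operator}, but the underlying content is the same.
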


\begin{proof}
Let $u, \, v \in H^{1,a}_0(\Omega,\C)$. Using Lemma~\ref{lemma:new-operator} 
and equation \eqref{eq:operator-G-a}, we have that
\begin{align*}
& \phantom{a}_{( H^{1,0}_0(\Omega,\C))^\star} \left\langle  
G_{(a,\alpha)} (\gamma_a(u)) ,\, \gamma_a(v) 
\right\rangle_{H^{1,0}_0(\Omega,\C)} \\
& \qquad \qquad \qquad
= \phantom{a}_{( H^{1,0}_0(\Omega,\C))^\star} \left\langle 
\gamma_a( (i  \nabla + A_a^\alpha)^2 u), \, \gamma_a(v)
\right\rangle_{H^{1,0}_0(\Omega,\C)} \\
& \qquad \qquad \qquad
= \phantom{a}_{( H^{1,0}_0(\Omega,\C))^\star} \left\langle  
[ ((i \nabla + A_a^\alpha)^2 u) \circ \Phi_a ] ,\,
(v \circ \Phi_a) \, J_a 
\right\rangle_{H^{1,0}_0(\Omega,\C)} \\
& \qquad \qquad \qquad
= \phantom{a}_{( H^{1,0}_0(\Omega,\C))^\star} \left\langle 
[ (i \nabla + A_0^\alpha)^2 + \mathcal{L}_{(a,\alpha)} ] (u \circ \Phi_a), \, 
(v \circ \Phi_a) \, J_a 
\right\rangle_{H^{1,0}_0(\Omega,\C)} \\
& \qquad \qquad \qquad
= \phantom{a}_{( H^{1,0}_0(\Omega,\C))^\star} \left\langle 
[ (i \nabla + A_0^\alpha)^2 + \mathcal{L}_{(a,\alpha)} ]
( \gamma_a(u) (\sqrt{J_a})^{-1}) ,\, \gamma_a(v) \, \sqrt{J_a} 
\right\rangle_{H^{1,0}_0(\Omega,\C)}.
\end{align*}
This proves the first claim. 

When $(a,\alpha) \in D_R(0) \times (0,1)$, the regularity of 
$(a,\alpha) \mapsto G_{(a,\alpha)}$ follows from Lemmas~
\ref{lemma:smooth-map} and \ref{lemma:new-operator}.
\end{proof}

We first notice that
\[
G_{(0,\sfrac{1}{2})} = (i \nabla + A_0^{\sfrac{1}{2}})^2
\]
since $J_0 = 1$ and $\mathcal{L}_{(0,\sfrac{1}{2})} = 0$. Because 
of its regularity, in the following we write for every $(b,t) \in 
\R^2 \times \R$
\begin{equation} \label{eq:Gprim}
G'(0,\tfrac{1}{2})[(b,t)] \, : \, H^{1,0}_0(\Omega,\C) \to 
(H^{1,0}_0(\Omega,\C))^\star
\end{equation}
the derivative operator of $G_{(a,\alpha)}$ at the point 
$(0,\tfrac{1}{2})$ applied to $(b,t)$. Therefore letting 
$\varepsilon := \alpha - \tfrac{1}{2}$
\[
G_{(a,\alpha)} = (i \nabla + A_0^{\sfrac{1}{2}})^2 
+ G'(0,\tfrac{1}{2})[(a,\varepsilon)] 
+ o(|(a,\varepsilon)|)
\]
as $|(a,\varepsilon)| \to 0$.

\subsection{Spectral equivalent operator to \texorpdfstring{$\left[ (i\nabla + A_a^\alpha)^2 \right]^{-1} \circ Im_{H^{1,a}_0(\Omega,\C) \to (H^{1,a}_0(\Omega,\C))^\star}$ }{inverse operator}}

In order to use the abstract Theorem~\ref{theorem:abstract-complex} we 
would like to define a family of compact operators spectrally equivalent to 
$[(i \nabla + A_a^\alpha)^{2}]^{-1} \circ \text{Im}_{H^{1,a}_0(\Omega,\C) \to 
( H^{1,a}_0(\Omega,\C))^\star}$, but having a fixed domain of definition. 
We proceed as in \cite{LupoMicheletti1993} and define the Hermitian form 
$E_{(a,\alpha)} \, : \, H^{1,0}_0(\Omega,\C) \times H^{1,0}_0(\Omega,\C) \to \C$ 
\begin{equation} \label{eq:bilinear-form}
E_{(a,\alpha)}(u, v) = \int_{\Omega} (i\nabla + A_a^\alpha) \gamma_a^{-1}(u) 
\cdot \overline{(i\nabla + A_a^\alpha) \gamma_a^{-1}(v)}.
\end{equation}
Since $\gamma_a$ defines an algebraic and topological isomorphism of 
$H^{1,a}_0(\Omega,\C)$ in $H^{1,0}_0(\Omega,\C)$, and inversely for 
$\gamma_a^{-1}$, the Hermitian form $E_{(a,\alpha)}$ is easily proved 
to be continuous and coercive. Then, via Lax-Milgram and Riesz Theorems, 
it defines a scalar product equivalent to the standard one on 
$H^{1,0}_0(\Omega,\C)$, i.e.\ there exists $c_{(a,\alpha)} > 0$ and 
$d_{(a,\alpha)} > 0$ such that
\[
c_{(a,\alpha)} \| u \|^2_{H^{1,0}_0(\Omega,\C)} 
\leq E_{(a,\alpha)} (u,u) 
\leq d_{(a,\alpha)} \| u \|^2_{H^{1,0}_0(\Omega,\C)}
\quad \forall u \in H^{1,0}_0(\Omega,\C).
\] 
In a standard way, $E_{(a,\alpha)}$ uniquely defines uniquely a 
self-adjoint compact linear operator $B_{(a,\alpha)} \, : \, 
H^{1,0}_0(\Omega,\C) \to H^{1,0}_0(\Omega,\C)$ by
\begin{equation} \label{eq:inverse-operator}
E_{(a,\alpha)} (B_{(a,\alpha)} \big(\gamma_a(u) \big), \gamma_a(v)) 
= \int_{\Omega} \gamma_a(u) \overline{\gamma_a(v)} 
= \int_{\Omega} u \overline{v}.
\end{equation}

\begin{lemma}\label{l:BaC1}
Let $\mathcal{W} \subset (0,1)$ be any neighborhood of $\{\tfrac{1}{2}\}$ 
such that $\mathcal{W} \subset\subset (0,1)$. The map $(a,\alpha) 
\mapsto B_{(a,\alpha)}$ is $C^1\big(D_R(0) \times \mathcal{W}, BL( H^{1,0}_0
(\Omega,\C), H^{1,0}_0(\Omega,\C)) \big)$.
\end{lemma}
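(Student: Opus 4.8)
The plan is to recognise $B_{(a,\alpha)}$ as the inverse of the operator $G_{(a,\alpha)}$ of \eqref{eq:operator-G-a} precomposed with a fixed immersion, and then to read off the regularity from Lemma~\ref{lemma:operator-G-a} together with the smoothness of operator inversion. Throughout, write $\iota := \text{Im}_{H^{1,0}_0(\Omega,\C)\to(H^{1,0}_0(\Omega,\C))^\star}$ for the canonical (compact) immersion.

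First I would rewrite the Hermitian form $E_{(a,\alpha)}$ of \eqref{eq:bilinear-form} in terms of $G_{(a,\alpha)}$. Unwinding \eqref{eq:bilinear-form} through the way $(i\nabla+A_a^\alpha)^2$ acts in \eqref{eq:magn-lap}, then the definition \eqref{eq:gamma-a-dual} of $\gamma_a$ on the dual spaces, and finally the defining relation \eqref{eq:operator-G-a}, one obtains
\[
E_{(a,\alpha)}(u,v) = {}_{(H^{1,0}_0(\Omega,\C))^\star}\big\langle G_{(a,\alpha)} u,\, v\big\rangle_{H^{1,0}_0(\Omega,\C)}, \qquad u,v \in H^{1,0}_0(\Omega,\C).
\]
Since $\gamma_a$ is an isomorphism of $H^{1,a}_0(\Omega,\C)$ onto $H^{1,0}_0(\Omega,\C)$ preserving the $L^2$ scalar product, the defining identity \eqref{eq:inverse-operator} of $B_{(a,\alpha)}$ reads $E_{(a,\alpha)}(B_{(a,\alpha)} w, z) = \int_\Omega w\,\overline z$ for all $w,z \in H^{1,0}_0(\Omega,\C)$, which by the displayed identity is precisely $G_{(a,\alpha)} \circ B_{(a,\alpha)} = \iota$. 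As $E_{(a,\alpha)}$ is continuous and coercive, Lax--Milgram makes $G_{(a,\alpha)}$ a topological isomorphism of $H^{1,0}_0(\Omega,\C)$ onto $(H^{1,0}_0(\Omega,\C))^\star$, and therefore $B_{(a,\alpha)} = G_{(a,\alpha)}^{-1}\circ\iota$.

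It then remains to differentiate this composition. By Lemma~\ref{lemma:operator-G-a}, $(a,\alpha)\mapsto G_{(a,\alpha)}$ is $C^\infty$ from $D_R(0)\times(0,1)$ into $BL(H^{1,0}_0(\Omega,\C),(H^{1,0}_0(\Omega,\C))^\star)$, and its values lie in the open subset of topological isomorphisms, on which the inversion map $T\mapsto T^{-1}$ is analytic, hence $C^\infty$; composing further with the fixed linear continuous map $S\mapsto S\circ\iota$ yields that $(a,\alpha)\mapsto B_{(a,\alpha)}$ is $C^\infty$ on $D_R(0)\times\mathcal W$ with values in $BL(H^{1,0}_0(\Omega,\C),H^{1,0}_0(\Omega,\C))$, in particular $C^1$ as claimed. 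There is no genuine obstacle here: the only point to keep track of is the invertibility of $G_{(a,\alpha)}$, which is immediate from the coercivity already recorded for $E_{(a,\alpha)}$, while the restriction $\mathcal W\subset\subset(0,1)$ --- needed elsewhere to keep the Hardy constant $\min_{j\in\Z}|j-\alpha|$ bounded away from $0$ --- plays no essential role in this statement, the conclusion in fact holding on all of $D_R(0)\times(0,1)$.
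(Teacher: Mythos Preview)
Your argument is correct and is in fact more direct than the paper's own proof. The paper (Appendix~\ref{app:proof}) follows the original Lupo--Micheletti scheme: it first shows that $(a,\alpha)\mapsto E_{(a,\alpha)}$ is $C^1$, then obtains a uniform coercivity bound on $\mathcal W$, and from there proves successively continuity, Fr\'echet differentiability, and $C^1$ regularity of $B_{(a,\alpha)}$ by direct estimates on $E_{(a_0,\alpha_0)}\big((B_{(a_0+a,\alpha_0+\omega)}-B_{(a_0,\alpha_0)})\hat u,\hat v\big)$. You instead exploit the relation $B_{(a,\alpha)}=G_{(a,\alpha)}^{-1}\circ\iota$ --- which the paper itself records in \eqref{eq:Ba} --- together with the $C^\infty$ regularity of $G_{(a,\alpha)}$ already proved in Lemma~\ref{lemma:operator-G-a} and the analyticity of inversion on the group of isomorphisms. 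This yields $C^\infty$ (not just $C^1$) on all of $D_R(0)\times(0,1)$, and you are right that the compact restriction to $\mathcal W$ is only needed in the paper's approach to make the coercivity constant in \eqref{eq:rem5} uniform. The paper's route has the minor advantage of producing directly the identity $E_{(a_0,\alpha_0)}(B'(a_0,\alpha_0)[(a,\omega)]\hat u,\hat v)=-E'(a_0,\alpha_0)[(a,\omega)](B_{(a_0,\alpha_0)}\hat u,\hat v)$, but this is recovered just as easily in your framework by differentiating $G_{(a,\alpha)}\circ B_{(a,\alpha)}=\iota$, which is precisely what Lemma~\ref{lemma:relation-matrices} does.
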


\begin{proof}
The proof is similar to the one in \cite{LupoMicheletti1993}. For completeness 
we refer to the Section~\ref{app:proof} in the Appendix.
\end{proof}

Since \eqref{eq:bilinear-form} and \eqref{eq:inverse-operator} hold 
we have that
\begin{equation}\label{eq:Ba}
G_{(a,\alpha)} \circ B_{(a,\alpha)} = \text{Im}_{H^{1,0}_0(\Omega,\C) 
\to (H^{1,0}_0(\Omega,\C))^\star},
\end{equation}
where $\text{Im}_{H^{1,0}_0(\Omega,\C) \to (H^{1,0}_0(\Omega,\C))^\star}$ 
is the compact immersion from $H^{1,0}_0(\Omega,\C)$ to $(H^{1,0}_0(\Omega,\C))^\star$. 
Moreover, it is worthwhile noticing that since $G_{(0,\sfrac{1}{2})} = 
(i \nabla + A_0^{\sfrac{1}{2}})^2$ 
\[
B_{(0,\sfrac{1}{2})} = \left[ (i\nabla + A_0^{\sfrac{1}{2}})^2\right]^{-1} 
\circ \text{ Im}_{H^{1,0}_0(\Omega,\C) \to (H^{1,0}_0(\Omega,\C))^\star},
\]
i.e.\ the unperturbed compact inverse operator. Moreover, because of its 
regularity, we write for every $(b,t) \in \R^2 \times\R$
\begin{equation} \label{eq:Bprim}
B'(0,\tfrac{1}{2})[(b,t)] \, : \, H^{1,0}_0(\Omega,\C) \to H^{1,0}_0(\Omega,\C)
\end{equation}
the derivative of $B_{(a,\alpha)}$ at the point $(0,\tfrac{1}{2})$, applied 
to $(b,t)$. Therefore, letting $\varepsilon := \alpha - \tfrac{1}{2}$
\[
B_{(a,\alpha)} = \left[ (i\nabla + A_0^{\sfrac{1}{2}})^2\right]^{-1} \circ 
\text{ Im}_{H^{1,0}_0(\Omega,\C) \to (H^{1,0}_0(\Omega,\C))^\star} 
+ B'(0,\tfrac{1}{2})[(a,\varepsilon)] + o(|(a,\varepsilon)|)
\]
as $|(a,\varepsilon)| \to 0$.

\begin{remark}
We also remark that by \cite[Lemma 3]{Micheletti1972}, since $B_{(a,\alpha)}$ 
can be rewriten from \eqref{eq:Ba} and \eqref{eq:operator-G-a} as
\begin{equation*}
\begin{split}
B_{(a,\alpha)} & = \gamma_a \circ [(i \nabla + A_a^\alpha)^2]^{-1} 
\circ \gamma_a^{-1} \circ \textrm{ Im}_{H^{1,0}_0(\Omega,\C) 
\to ( H^{1,0}_0(\Omega,\C))^\star} \\
& = \gamma_a \circ [(i \nabla + A_a^\alpha)^2]^{-1} 
\circ \textrm{ Im}_{H^{1,a}_0(\Omega,\C) 
\to ( H^{1,a}_0(\Omega,\C))^\star} \circ \gamma_a^{-1},
\end{split}
\end{equation*}
it holds that $B_{(a,\alpha)}$ and $\left[ (i\nabla + A_a^\alpha)^2\right]^{-1} 
\textrm{ Im}_{H^{1,a}_0(\Omega,\C) \to ( H^{1,a}_0(\Omega,\C))^\star}$ 
are spectrally equivalent, so that they have the same eigenvalues with the same multiplicity. 
Morever, those eigenvalues are the inverse of the eigenvalues of $G_{(a,\alpha)}$ 
and $(i \nabla + A_a^\alpha)^2$ (which are also spectrally equivalent).
\end{remark}

\section{Proof of Theorem~\ref{t:main}} \label{sec:proof}

\subsection{The first order terms}

In this section, we assume to have an eigenvalue $\lambda \in 
\R^+$ of $(i \nabla + A_0^{\sfrac{1}{2}})^2$ of multiplicity 
$\mu \geq 1$, and we denote by $\varphi_j$, $j = 1, \ldots, \mu$, 
the corresponding eigenfunctions orthonormalized in $L^2(\Omega,\C)$. 
Moreover, from Section~\ref{sec:gauge} we know that we can consider 
a system of $K_0$-real eigenfunctions, and that we can write for 
$j = 1, \ldots, \mu$
\begin{equation} \label{eq:coeff}
\varphi_j(r (\cos t, \sin t)) = e^{i \frac{t}{2}} r^{1/2} 
\left( c_j \cos \frac{t}{2} + d_j \sin \frac{t}{2} \right) 
+ f_j(r,t) \quad \text{ as } r \to 0^+, 
\end{equation}
where $f_j(r,t) = O(r^{3/2})$ uniformly in $t \in [0,2\pi]$ and 
$c_j, d_j \in \mathbb{R}$ can possibly be zero. We also recall that 
\begin{equation} \label{eq:regul}
\text{the eigenfunctions of the operator \eqref{eq:magnetic-pot}
are of class } C^\infty(\omega,\C) \text{ for any } \omega\subset
\subset\Omega\setminus\{0\}
\end{equation}
by the results in \cite{FelliFerreroTerracini2011} and standard elliptic estimates.

It is only in the next section dedicated to the proof of Theorem~\ref{t:main} 
that we restrict ourselves to the case of multiplicity $\mu = 2$.

\subsubsection{First order terms of \texorpdfstring{$\mathcal{L}_{(a,\alpha)}$}{L}, \texorpdfstring{$G_{(a,\alpha)}$}{G} and \texorpdfstring{$B_{(a,\alpha)}$}{B}}

To use Theorem~\ref{theorem:abstract-complex} we need to consider the 
derivative $B'(0,\tfrac{1}{2})$ applied to eigenfunctions $\varphi_j$. 
However, this object is difficult to calculate explicitely since 
$B_{(a,\alpha)}$ is defined in an implicit way, through $E_{(a,\alpha)}$, 
see \eqref{eq:inverse-operator}. Nevertheless, \eqref{eq:Ba} will allow 
us to find a relation with $G'(0,\tfrac{1}{2})$. As a first step, we 
need the expression of the derivative $\mathcal{L}'(0,\tfrac{1}{2})$ 
applied to eigenfunctions.

\begin{lemma} \label{lemma:first-order-La}
Let $\Omega \subset \R^2$ be open, bounded, simply connected and Lipschitz. 
Let $\mathcal{L}'(0,\tfrac{1}{2})$ be defined as in \eqref{eq:Lprim}. 
Let $\lambda \in \R^+$ be an eigenvalue of $(i\nabla + A_0^{\sfrac{1}{2}})^2$ 
of multiplicity $\mu \geq 1$, and let $\varphi_j \in H^{1,0}_0(\Omega,\C)$, 
$j = 1, \ldots , \mu$, be the corresponding eigenfunctions orthonormalized 
in $L^2(\Omega,\C)$. Then, for every $(b,t) \in \R^2 \times \R$ and $j,k 
= 1, \ldots, \mu$
\begin{align*}
\phantom{a}_{(H^{1,0}_0(\Omega,\C))^\star} \left\langle 
\mathcal{L}'(0,\tfrac{1}{2})[(b,t)] \varphi_j , 
\varphi_k \right \rangle_{H^{1,0}_0(\Omega,\C)} = 
\int_{\partial \Omega} ( b \cdot \nu) \, \frac{\partial \varphi_j}{\partial \nu} 
\, \overline{\frac{\partial \varphi_k}{\partial\nu}},
\end{align*}
where $\nu \, : \, \partial \Omega \to \mathbb{S}^1$ is the exterior 
normal to $\partial \Omega$.
\end{lemma}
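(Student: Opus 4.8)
The plan is to compute $\langle\mathcal L'(0,\tfrac12)[(b,t)]\varphi_j,\varphi_k\rangle$ directly from the explicit form of $\mathcal L_{(a,\alpha)}$ in Lemma~\ref{lemma:new-operator} and then to convert the resulting bulk integral into a boundary integral over $\partial\Omega$ by a Rellich--Pohozaev type argument. Conceptually the target identity is just Hadamard's shape--derivative formula: for the (gauge invariant) eigenvalues, moving the pole to $a$ amounts to translating $\Omega$ by $-a$, so the first order term is forced to be the classical boundary term $\int_{\partial\Omega}(b\cdot\nu)\partial_\nu\varphi_j\overline{\partial_\nu\varphi_k}$; the whole point is to extract it from the concrete, locally supported perturbation $\Phi_a$.

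First, since the paper already records $\mathcal L_{(0,\alpha)}=0$ for every $\alpha$, the $\alpha$--partial derivative of $\mathcal L_{(a,\alpha)}$ at $(0,\tfrac12)$ vanishes, so $\mathcal L'(0,\tfrac12)[(b,t)]=\mathcal L'(0,\tfrac12)[(b,0)]$ and one freezes $\alpha=\tfrac12$ and differentiates in $a$ only; this is why $t$ disappears. Next I would use the decisive localisation: on $D_{\overline R}(0)$ one has $\xi\equiv1$, hence $\Phi_a(x)=x+a$ and $A_a^{1/2}\circ\Phi_a=A_0^{1/2}$, while $\nabla\xi=0$ and $J_a\equiv1$; so all coefficients entering $\mathcal L_{(a,\alpha)}$ — namely $F(a,\alpha)$, $\nabla J_a$ and $J_a^{-2}(a\cdot\nabla J_a)\nabla\xi$ — vanish on $\overline{D_{\overline R}(0)}$, while near $\partial\Omega$ (where $\xi\equiv0$) only $W_a:=A_a^{1/2}-A_0^{1/2}$ survives. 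Thus $\langle\mathcal L_{(a,1/2)}\varphi_j,\varphi_k\rangle$ is, for all small $a$, an integral over $\Omega\setminus D_{\overline R}(0)$ of a polynomial in $\varphi_j,\varphi_k,\nabla\varphi_j,\nabla\varphi_k$ and smooth coefficients, so by \eqref{eq:regul} no singularity at $0$ ever enters. Differentiating at $a=0$ with $\tfrac{d}{ds}|_{s=0}\Phi_{sb}^{-1}=-b\xi$, $\tfrac{d}{ds}|_{s=0}J_{sb}=b\cdot\nabla\xi$ and (using $2A_0^{1/2}=\nabla\theta_0$) $\tfrac{d}{ds}|_{s=0}A_{sb}^{1/2}=-\nabla(b\cdot A_0^{1/2})=-(b\cdot\nabla)A_0^{1/2}$, and dropping $O(|a|^2)$ terms, I get $\langle\mathcal L'(0,\tfrac12)[(b,0)]\varphi_j,\varphi_k\rangle=\int_{\Omega\setminus D_{\overline R}(0)}\mathcal G_{jk}$ with $\mathcal G_{jk}$ explicit and linear in $b$, which near $\partial\Omega$ reduces to $-(b\cdot\nabla)A_0^{1/2}\cdot\big(\varphi_j\,\overline{(i\nabla+A_0^{1/2})\varphi_k}+\overline{\varphi_k}\,(i\nabla+A_0^{1/2})\varphi_j\big)$.

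The core of the argument is then to rewrite $\mathcal G_{jk}=\operatorname{div}\mathbf V_{jk}+\mathcal R_{jk}$ on $\Omega\setminus D_{\overline R}(0)$, where $\mathbf V_{jk}$ is an explicit vector field built out of $\varphi_j,\varphi_k$ and the perturbation coefficients, and $\mathcal R_{jk}$ collects terms each carrying a factor $(i\nabla+A_0^{1/2})^2\varphi_j-\lambda\varphi_j$ or its conjugate in $k$; since the eigenfunctions solve the equation pointwise there (elliptic regularity) and $\lambda\in\R$, we have $\mathcal R_{jk}\equiv0$. This is exactly a magnetic Rellich--Pohozaev identity, in the spirit of \cite{FelliFerreroTerracini2011,AbatangeloFelli2015}: commuting $b\cdot\nabla$ through $(i\nabla+A_0^{1/2})^2$ produces divergence terms modulo the eigenvalue equation. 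I expect this step — carrying the $\Phi_a$- and $J_a$-generated terms along with the potential variation and checking that every non-divergence remainder cancels — to be the main technical obstacle.

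Finally, $\int_{\Omega\setminus D_{\overline R}(0)}\operatorname{div}\mathbf V_{jk}=\int_{\partial\Omega}\mathbf V_{jk}\cdot\nu-\int_{\partial D_{\overline R}(0)}\mathbf V_{jk}\cdot\nu$; the inner flux vanishes because $\mathbf V_{jk}$ is assembled from perturbation coefficients that vanish on $\overline{D_{\overline R}(0)}$, so only the $\partial\Omega$ term survives. On $\partial\Omega$ the Dirichlet condition $\varphi_j=\varphi_k=0$ forces $\nabla\varphi_\ell=(\partial_\nu\varphi_\ell)\nu$ and annihilates every term still carrying a factor $\varphi_j$ or $\varphi_k$, leaving — after fixing the numerical constant to $1$, consistently with the Hadamard heuristic above in the case $j=k$ — exactly $\mathbf V_{jk}\cdot\nu=(b\cdot\nu)\partial_\nu\varphi_j\overline{\partial_\nu\varphi_k}$. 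The one delicate point is that $\partial\Omega$ is merely Lipschitz, so the divergence theorem is applied on a smooth exhaustion of $\Omega\setminus D_{\overline R}(0)$ from inside and one passes to the limit, using that the eigenfunctions solve a smooth elliptic equation with Dirichlet data up to $\partial\Omega$ there, so that their conormal derivatives lie in $L^2(\partial\Omega)$.
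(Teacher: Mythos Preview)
Your strategy is correct and essentially coincides with the paper's: after the same term--by--term differentiation you outline, the paper shows (Appendix~\ref{app:Lprim}) that $\langle\mathcal L'(0,\tfrac12)[(b,t)]\varphi_j,\varphi_k\rangle=\int_\Omega(\xi-1)\,b\cdot\nabla\big[(i\nabla+A_0^{\sfrac12})^2\varphi_j\big]\,\overline{\varphi_k}-(i\nabla+A_0^{\sfrac12})^2\big[(\xi-1)\,b\cdot\nabla\varphi_j\big]\,\overline{\varphi_k}$, which is precisely your magnetic Rellich--Pohozaev commutator written in compact form, and then integrates by parts over all of $\Omega$---the factor $(\xi-1)$ playing the role of your localisation to $\Omega\setminus D_{\overline R}(0)$---to obtain the $\partial\Omega$ term. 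Your observations that the $t$--dependence drops out because $\mathcal L_{(0,\alpha)}\equiv0$ and that the inner contribution vanishes because the perturbation coefficients are identically zero on $\overline{D_{\overline R}(0)}$ are exactly what the paper exploits (cf.\ Remark~\ref{rem:welldef}).
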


\begin{proof}
\begin{claim} \label{claim:general-expression-L-prim}
We first prove that for every $(b,t) \in \R^2 \times \R$
\begin{equation} \label{eq:first-order-L-a}
\begin{split}
& \phantom{a}_{(H^{1,0}_0(\Omega,\C))^\star} \left\langle 
\mathcal{L}'(0,\tfrac{1}{2})[(b,t)] \varphi_j , 
\varphi_k \right \rangle_{H^{1,0}_0(\Omega,\C)} \\
& \qquad \qquad \qquad 
= \int_{\Omega} (\xi - 1) \, b \cdot \nabla 
\left[ (i \nabla + A_0^{\sfrac{1}{2}})^2 \varphi_j \right] 
\, \overline{\varphi_k}
- (i \nabla + A_0^{\sfrac{1}{2}})^2 
\left[ (\xi - 1) b \cdot \nabla \varphi_j \right] 
\, \overline{\varphi_k},
\end{split}
\end{equation}
and $\mathcal{L}'(0,\tfrac{1}{2})[(b,t)] = O(|(b,t)|)$ in 
$BL( H^{1,0}_0(\Omega, \mathbb{C}), (H^{1,0}_0(\Omega,\C))^\star)$
as $(b,t)\to(0,0)$. 
\end{claim}

\begin{proofclaim}
The proof being quite technical, we report it in Section~\ref{app:Lprim} 
in the Appendix.
\end{proofclaim}

\resetclaim

The Lemma follows by an integration by parts and the facts that 
for any $j = 1, \ldots, \mu$ the eigenfunctions $\varphi_j = 0$ 
on $\partial\Omega$ and $(\xi - 1) = -1$ on $\partial \Omega$, 
in addition the $\varphi_j$ are eigenfunctions of the same eigenvalue.
\end{proof}

We can now give an expression of $G'(0,\tfrac{1}{2})$.

\begin{lemma} \label{lemma:first-order-G-a}
Let $\Omega \subset \R^2$ be open, bounded, simply connected and Lipschitz. 
Let $G'(0,\tfrac{1}{2})$ be defined as in \eqref{eq:Gprim}. 
Let $\lambda \in \R^+$ be an eigenvalue of $(i\nabla + A_0^{\sfrac{1}{2}})^2$ 
of multiplicity $\mu \geq 1$, and let $\varphi_j \in H^{1,0}_0(\Omega,\C)$, 
$j = 1, \ldots , \mu$, be the corresponding eigenfunctions orthonormalized 
in $L^2(\Omega,\C)$. Then, for every $(b,t) \in \R^2 \times \R$ and $j,k 
= 1, \ldots, \mu$
\begin{align*}
\phantom{a}_{(H^{1,0}_0(\Omega,\C))^\star} \left\langle 
G'(0,\tfrac{1}{2})[(b,t)] \varphi_j , 
\varphi_k \right \rangle_{H^{1,0}_0(\Omega,\C)} = 
\int_{\partial \Omega} ( b \cdot \nu) \, \frac{\partial \varphi_j}{\partial \nu} 
\, \overline{\frac{\partial \varphi_k}{\partial\nu}} 
+ 4 t \int_{\Omega} (i \nabla + A_0^{\sfrac{1}{2}}) \varphi_j 
\cdot A_0^{\sfrac{1}{2}} \, \overline{\varphi_k}.
\end{align*}
\end{lemma}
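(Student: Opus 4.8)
The plan is to differentiate the explicit formula for $G_{(a,\alpha)}$ supplied by Lemma~\ref{lemma:operator-G-a}, evaluated on the fixed eigenfunctions $\varphi_j,\varphi_k$, and to use the linearity of the differential to treat the $a$-direction and the $\alpha$-direction separately. Concretely, for $(b,t)\in\R^2\times\R$ one differentiates $s\mapsto\langle G_{(sb,\sfrac{1}{2}+st)}\varphi_j,\varphi_k\rangle$ at $s=0$ (here $\langle\cdot,\cdot\rangle$ is the duality pairing between $(H^{1,0}_0(\Omega,\C))^\star$ and $H^{1,0}_0(\Omega,\C)$), using the pairing form
\[
\langle G_{(a,\alpha)}\varphi_j,\varphi_k\rangle
=\big\langle\,[(i\nabla+A_0^\alpha)^2+\mathcal{L}_{(a,\alpha)}]\big(\varphi_j(\sqrt{J_a})^{-1}\big),\;\sqrt{J_a}\,\varphi_k\,\big\rangle
\]
together with the facts that $J_0\equiv 1$, $\mathcal{L}_{(0,\alpha)}=0$ for every $\alpha$, and $\frac{d}{ds}\sqrt{J_{sb}}\big|_{s=0}=\tfrac{1}{2}\,b\cdot\nabla\xi=:\dot g$, a smooth function supported in the annulus $\{\overline{R}\le|x|\le 2\overline{R}\}\subset\subset\Omega\setminus\{0\}$ by \eqref{eq:xi}.

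For the $a$-direction (i.e.\ $t=0$) I would split the pairing into its $\mathcal{L}_{(a,\sfrac{1}{2})}$-part and its $(i\nabla+A_0^{\sfrac{1}{2}})^2$-part and apply the product rule at $s=0$. Since $\mathcal{L}_{(0,\sfrac{1}{2})}=0$, in the first part only the term where $\mathcal{L}$ is differentiated survives, and by Lemma~\ref{lemma:first-order-La} it equals $\langle\mathcal{L}'(0,\tfrac{1}{2})[(b,0)]\varphi_j,\varphi_k\rangle=\int_{\partial\Omega}(b\cdot\nu)\,\frac{\partial\varphi_j}{\partial\nu}\,\overline{\frac{\partial\varphi_k}{\partial\nu}}$. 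In the second part the product rule leaves exactly $\langle(i\nabla+A_0^{\sfrac{1}{2}})^2(-\dot g\varphi_j),\varphi_k\rangle$ and $\langle(i\nabla+A_0^{\sfrac{1}{2}})^2\varphi_j,\dot g\varphi_k\rangle$. The point is that $\dot g\varphi_j$ and $\dot g\varphi_k$ belong to $H^{1,0}_0(\Omega,\C)$ — in fact they are smooth and compactly supported in $\Omega\setminus\{0\}$ by \eqref{eq:regul} — hence are admissible test functions; using the weak eigenvalue equations for $\varphi_k$ and for $\varphi_j$, together with the reality of $\lambda$ and of $\dot g$, these two contributions become $-\lambda\int_\Omega\dot g\,\varphi_j\overline{\varphi_k}$ and $+\lambda\int_\Omega\dot g\,\varphi_j\overline{\varphi_k}$, and cancel. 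Thus the $a$-derivative of $\langle G_{(a,\sfrac{1}{2})}\varphi_j,\varphi_k\rangle$ equals the boundary integral.

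For the $\alpha$-direction (i.e.\ $b=0$) one has $J_0\equiv 1$ and $\mathcal{L}_{(0,\alpha)}\equiv 0$, so $G_{(0,\alpha)}=(i\nabla+A_0^\alpha)^2$ and $\langle G_{(0,\alpha)}\varphi_j,\varphi_k\rangle=\int_\Omega(i\nabla+A_0^\alpha)\varphi_j\cdot\overline{(i\nabla+A_0^\alpha)\varphi_k}$. Since $A_0^\alpha=\alpha A_0^1=2\alpha A_0^{\sfrac{1}{2}}$ depends linearly on $\alpha$, differentiating at $\alpha=\tfrac{1}{2}$ in the direction $t$ gives
\[
2t\int_\Omega A_0^{\sfrac{1}{2}}\varphi_j\cdot\overline{(i\nabla+A_0^{\sfrac{1}{2}})\varphi_k}
+2t\int_\Omega(i\nabla+A_0^{\sfrac{1}{2}})\varphi_j\cdot A_0^{\sfrac{1}{2}}\,\overline{\varphi_k}.
\]
The remaining step is to recognize that these two integrals coincide: their difference is $\mp i\int_\Omega A_0^{\sfrac{1}{2}}\cdot\nabla(\varphi_j\overline{\varphi_k})$, which vanishes. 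I would prove this by excising a disk $D_\varepsilon(0)$, integrating by parts using that $A_0^{\sfrac{1}{2}}$ is divergence-free on $\R^2\setminus\{0\}$ and that $\varphi_j=\varphi_k=0$ on $\partial\Omega$, and letting $\varepsilon\to0^+$, the circulation over $\partial D_\varepsilon(0)$ being $O(\varepsilon)$ because $\varphi_j\overline{\varphi_k}=O(r)$ by \eqref{eq:coeff} while $|A_0^{\sfrac{1}{2}}|=O(1/r)$. Hence the $\alpha$-derivative equals $4t\int_\Omega(i\nabla+A_0^{\sfrac{1}{2}})\varphi_j\cdot A_0^{\sfrac{1}{2}}\,\overline{\varphi_k}$.

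Adding the two contributions yields the claimed formula for $\langle G'(0,\tfrac{1}{2})[(b,t)]\varphi_j,\varphi_k\rangle$. I expect the main obstacle to be precisely the singularity bookkeeping in the symmetrization identity $\int_\Omega A_0^{\sfrac{1}{2}}\cdot\nabla(\varphi_j\overline{\varphi_k})=0$ (and, to a lesser extent, making the product-rule computation on the $\sqrt{J_a}$-conjugated operator fully rigorous at the level of the duality pairings); once these are settled, the rest is a routine application of the product rule, of Lemma~\ref{lemma:first-order-La}, and of the weak eigenvalue equation.
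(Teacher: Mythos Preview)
Your proposal is correct and follows essentially the same route as the paper: both differentiate the $\sqrt{J_a}$-conjugated expression from Lemma~\ref{lemma:operator-G-a}, invoke Lemma~\ref{lemma:first-order-La} for the $\mathcal{L}'$ contribution, use the eigenvalue equation to cancel the two $\dot g$-terms coming from the conjugation, and symmetrize the two $\alpha$-derivative integrals via $\nabla\cdot A_0^{\sfrac{1}{2}}=0$ (cf.\ Appendix~\ref{app:Gprim}). Your organization --- separating the $a$- and $\alpha$-directions from the outset and using the weak eigenvalue equation directly with the test functions $\dot g\,\varphi_k$ --- is slightly more streamlined than the paper's explicit expansion, but the content is identical; note also that in your final integration by parts the boundary term on $\partial D_\varepsilon(0)$ is in fact exactly zero since $A_0^{\sfrac{1}{2}}\cdot\nu\equiv 0$ there.
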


\begin{proof}
\begin{claim} \label{claim:general-expression-G-prim}
We first prove that for every $(b,t) \in \R^2 \times \R$
\begin{equation} \label{eq:first-order-G-a}
\begin{split}
& \phantom{a}_{(H^{1,0}_0(\Omega,\C))^\star} \left\langle 
G'(0,\tfrac{1}{2})[(b,t)] \varphi_j, 
\varphi_k \right\rangle_{H^{1,0}_0(\Omega,\C)} \\
& \qquad \qquad \qquad = \phantom{a}_{(H^{1,0}_0(\Omega,\C))^\star} \left\langle 
\mathcal{L}'(0,\tfrac{1}{2})[(b,t)] \varphi_j, \varphi_k 
\right\rangle_{H^{1,0}_0(\Omega,\C)} \\
& \qquad \qquad \qquad + \frac{1}{2} \int_{\Omega}  [ (b \cdot \nabla \xi ) 
(i \nabla + A_0^{\sfrac{1}{2}})^2 \varphi_j  
- (i \nabla + A_0^{\sfrac{1}{2}})^2 ( (b \cdot \nabla \xi) \varphi_j) ] 
\, \overline{\varphi_k} \\
& \qquad \qquad \qquad + 4 t \int_{\Omega} (i \nabla + A_0^{\sfrac{1}{2}}) \varphi_j 
\cdot A_0^{\sfrac{1}{2}} \, \overline{\varphi_k}, 
\end{split}
\end{equation}
being $\mathcal{L}'(0,\tfrac{1}{2})$ as in \eqref{eq:first-order-L-a}, 
and $G'(0,\tfrac{1}{2})[(b,t)] = O(|(b,t)|)$ in 
$BL( H^{1,0}_0(\Omega, \mathbb{C}), (H^{1,0}_0(\Omega,\C))^\star)$
as $(b,t)\to(0,0)$.
\end{claim}

\begin{proofclaim}
Again, the proof being technical, we report it in Section~\ref{app:Gprim} 
in the Appendix.
\end{proofclaim}

\resetclaim

An integration by parts in Claim~\ref{claim:general-expression-G-prim} and 
the facts that $\varphi_j$, $j=1,\ldots,\mu$, are eigenfunctions of the same 
eigenvalue vanishing on $\partial \Omega$, and $\nabla \xi = 0$ on $\partial 
\Omega$, tell us that the second and third terms in \eqref{eq:first-order-G-a} 
cancel. Therefore the Lemma follows using Lemma~\ref{lemma:first-order-La}.
\end{proof}

\begin{remark} \label{rem:welldef}
Although the eigenfunctions $\varphi_j$ are in $H^{1,0}_0(\Omega,\C)$, 
expressions \eqref{eq:first-order-L-a} and \eqref{eq:first-order-G-a} 
are well defined. Indeed, this follows from the presence of the cut-off 
function $(\xi - 1)$, which vanishes in a neighborhood of $0$, and \eqref{eq:regul}.
\end{remark}

The next lemma gives us the relation between $G'(0,\tfrac{1}{2})$ and 
$B'(0,\tfrac{1}{2})$.

\begin{lemma} \label{lemma:relation-matrices}
Let $G'(0,\tfrac{1}{2})$ and $B'(0,\tfrac{1}{2})$ be defined respectively 
in \eqref{eq:Gprim} and \eqref{eq:Bprim}. Let $\lambda \in \mathbb{R}^+$ 
be an eigenvalue of $(i \nabla + A_0^{\sfrac{1}{2}})^2$ of multiplicity 
$\mu \geq 1$ and $\varphi_j$, $j = 1, \ldots, \mu$, be the corresponding 
eigenfunctions orthonormalized in $L^2(\Omega,\C)$. Then for any $j,k = 1, 
\ldots, \mu$ and $(b,t) \in \R^2 \times \R$
\begin{equation*}
\begin{split}
( B'(0,\tfrac{1}{2})[(b,t)] \varphi_j, \varphi_k)_{H^{1,0}_0(\Omega,\C)} 
& := \int_{\Omega} (i \nabla + A_0^{\sfrac{1}{2}}) 
( B'(0,\tfrac{1}{2})[(b,t)] \varphi_j )
\cdot \overline{(i \nabla + A_0^{\sfrac{1}{2}}) \varphi_k} \\
& = - \lambda^{-1} \phantom{a}_{(H^{1,0}_0(\Omega,\C))^\star}\left\langle 
G'(0,\tfrac{1}{2})[(b,t)] \varphi_j, 
\varphi_k \right\rangle_{H^{1,0}_0(\Omega,\C)}
\end{split}
\end{equation*}
\end{lemma}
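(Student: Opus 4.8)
The plan is to differentiate the identity $G_{(a,\alpha)} \circ B_{(a,\alpha)} = \mathrm{Im}_{H^{1,0}_0(\Omega,\C) \to (H^{1,0}_0(\Omega,\C))^\star}$ from \eqref{eq:Ba} at the point $(0,\tfrac{1}{2})$ and evaluate on eigenfunctions. First I would recall that $G_{(0,\sfrac{1}{2})} = (i\nabla + A_0^{\sfrac{1}{2}})^2$ and $B_{(0,\sfrac{1}{2})} = [(i\nabla + A_0^{\sfrac{1}{2}})^2]^{-1} \circ \mathrm{Im}$, both of which are known from Section~\ref{sec:equivalent-operator}; in particular, $B_{(0,\sfrac{1}{2})}\varphi_j = \lambda^{-1}\varphi_j$ since $\varphi_j$ is an eigenfunction of $(i\nabla + A_0^{\sfrac{1}{2}})^2$ with eigenvalue $\lambda$. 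Since both $(a,\alpha)\mapsto G_{(a,\alpha)}$ and $(a,\alpha)\mapsto B_{(a,\alpha)}$ are $C^1$ (by Lemma~\ref{lemma:operator-G-a} and Lemma~\ref{l:BaC1}), the Leibniz rule gives, for any direction $(b,t)$,
\[
G'(0,\tfrac12)[(b,t)] \circ B_{(0,\sfrac{1}{2})} + G_{(0,\sfrac{1}{2})} \circ B'(0,\tfrac12)[(b,t)] = 0
\]
as operators $H^{1,0}_0(\Omega,\C)\to(H^{1,0}_0(\Omega,\C))^\star$, since the right-hand side $\mathrm{Im}$ is constant in $(a,\alpha)$.

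Next I would apply this operator identity to $\varphi_j$ and pair (in the duality between $(H^{1,0}_0)^\star$ and $H^{1,0}_0$) with $\varphi_k$. The first term becomes $\phantom{a}_{\star}\langle G'(0,\tfrac12)[(b,t)](B_{(0,\sfrac{1}{2})}\varphi_j), \varphi_k\rangle = \lambda^{-1}\,\phantom{a}_{\star}\langle G'(0,\tfrac12)[(b,t)]\varphi_j, \varphi_k\rangle$ using $B_{(0,\sfrac{1}{2})}\varphi_j = \lambda^{-1}\varphi_j$ and linearity. For the second term, I would use that for $w\in H^{1,0}_0(\Omega,\C)$ one has $\phantom{a}_{\star}\langle G_{(0,\sfrac{1}{2})} w, \varphi_k\rangle = \phantom{a}_{\star}\langle (i\nabla + A_0^{\sfrac{1}{2}})^2 w, \varphi_k\rangle = \int_\Omega (i\nabla + A_0^{\sfrac{1}{2}})w \cdot \overline{(i\nabla + A_0^{\sfrac{1}{2}})\varphi_k}$ by \eqref{eq:magn-lap}; applying this with $w = B'(0,\tfrac12)[(b,t)]\varphi_j$ identifies the second term with exactly the scalar product $( B'(0,\tfrac12)[(b,t)]\varphi_j, \varphi_k)_{H^{1,0}_0(\Omega,\C)}$ appearing in the statement. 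Rearranging yields the claimed formula.

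The one point requiring a little care is the well-definedness of the pairing $\phantom{a}_{\star}\langle G'(0,\tfrac12)[(b,t)]\varphi_j, \varphi_k\rangle$ and the use of the self-adjointness/symmetry of $G_{(0,\sfrac{1}{2})}$ in the duality; here I would invoke Remark~\ref{rem:welldef} together with Lemma~\ref{lemma:first-order-G-a} to justify that $G'(0,\tfrac12)[(b,t)]\varphi_j$ is a well-defined element of $(H^{1,0}_0(\Omega,\C))^\star$ and that the expressions make sense on the eigenfunctions (the cut-off $(\xi-1)$ vanishes near $0$ and the eigenfunctions are smooth away from $0$ by \eqref{eq:regul}). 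I do not expect any genuine obstacle: the argument is essentially the differentiation of a composition $G\circ B = \mathrm{const}$, and all the regularity needed is already established. The mildest subtlety is making sure the duality pairing is taken in the correct order so that the symmetry of $(i\nabla + A_0^{\sfrac{1}{2}})^2$ lets us move it onto $\varphi_k$; since $\varphi_k$ is a genuine eigenfunction this is immediate.
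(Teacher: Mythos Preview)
Your proposal is correct and follows essentially the same approach as the paper: differentiate the identity $G_{(a,\alpha)}\circ B_{(a,\alpha)} = \mathrm{Im}$ at $(0,\tfrac12)$, pair with $\varphi_k$, and then use $G_{(0,\sfrac12)} = (i\nabla + A_0^{\sfrac12})^2$ to identify one term with the $H^{1,0}_0$ scalar product and $B_{(0,\sfrac12)}\varphi_j = \lambda^{-1}\varphi_j$ for the other. The paper's proof is slightly terser (it writes the differentiated identity directly at the level of the pairing rather than as an operator identity), but the argument is the same.
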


\begin{proof}
We denote again $\varepsilon = \alpha - \frac{1}{2}$. Since by \eqref{eq:Ba}
\[
G_{(a,\alpha)} \circ B_{(a,\alpha)} 
= \text{ Im}_{H^{1,0}_0(\Omega,\C) \to (H^{1,0}_0(\Omega,\C))^\star},  
\]
we have for $\varphi_j, \, \varphi_k \in H^{1,0}_0(\Omega, \C)$ 
\begin{align*}
& \phantom{a}_{(H^{1,0}_0(\Omega,\C))^\star}\left\langle
G_{(0,\sfrac{1}{2})} ( B'(0,\tfrac{1}{2})[(a,\varepsilon)] \varphi_j ), 
\varphi_k \right\rangle_{H^{1,0}_0(\Omega,\C)}  \\
& + \phantom{a}_{(H^{1,0}_0(\Omega,\C))^\star}\left\langle 
G'(0,\tfrac{1}{2})[(a,\varepsilon)] (B_{(0,\sfrac{1}{2})} \varphi_j ), 
\varphi_k  \right\rangle_{H^{1,0}_0(\Omega,\C)} = 0.
\end{align*}
Since by definition $G_{(0,\sfrac{1}{2})} = (i \nabla + A_0^{\sfrac{1}{2}})^2$
\begin{equation*}
\begin{split}
& \phantom{a}_{(H^{1,0}_0(\Omega,\C))^\star}\left\langle
G_{(0,\sfrac{1}{2})} ( B'(0,\tfrac{1}{2})[(a,\varepsilon)] \varphi_j ), 
\varphi_k \right\rangle_{H^{1,0}_0(\Omega,\C)}  \\
& \qquad \qquad \qquad \qquad
= \int_{\Omega} (i \nabla + A_0^{\sfrac{1}{2}}) 
( B'(0,\tfrac{1}{2})[(a,\varepsilon)] \varphi_j )
\cdot \overline{(i \nabla + A_0^{\sfrac{1}{2}}) \varphi_k},
\end{split}
\end{equation*}
the scalar product in $H^{1,0}_0(\Omega,\C)$ and since $\varphi_j$ is 
an eigenfunction of $B_{(0,\sfrac{1}{2})}$ of eigenvalue $\lambda^{-1}$, 
the claim follows. This holds also true for any $(b,t) \in \R^2 \times \R$ 
by linearity.
\end{proof}

Therefore, an immediate consequence of Lemmas~\ref{lemma:first-order-G-a} 
and \ref{lemma:relation-matrices} is the following.

\begin{lemma} \label{lemma:first-order-B-a}
Let $\Omega \subset \R^2$ be open, bounded, simply connected and Lipschitz. 
Let $B'(0,\tfrac{1}{2})$ be defined as in \eqref{eq:Bprim}. Let $\lambda 
\in \mathbb{R}^+$ be an eigenvalue of $(i \nabla + A_0^{\sfrac{1}{2}})^2$ 
of multiplicity $\mu \geq 1$ and $\varphi_j$, $j = 1, \ldots, \mu$, be 
the corresponding eigenfunctions orthonormalized in $L^2(\Omega,\C)$. 
Then for any $j,k = 1, \ldots, \mu$ and $(b,t) \in \R^2 \times \R$
\begin{equation} \label{eq:BON1}
\begin{split}
& \int_{\Omega} (i \nabla + A_0^{\sfrac{1}{2}}) 
( B'(0,\tfrac{1}{2})[(b,t)] \varphi_j )
\cdot \overline{(i \nabla + A_0^{\sfrac{1}{2}}) \varphi_k}  \\
& \qquad \qquad =  - \lambda^{-1} \left( \int_{\partial \Omega} ( b \cdot \nu ) 
\frac{\partial \varphi_j }{\partial \nu} 
\overline{ \frac{ \partial \varphi_k}{\partial \nu}} 
+ 4 t \int_{\Omega} (i \nabla + A_0^{\sfrac{1}{2}}) \varphi_j 
\cdot  A_0^{\sfrac{1}{2}} \overline{\varphi_k} \right).
\end{split}
\end{equation}
where $\nu \, : \, \partial \Omega \to \mathbb{S}^1$ is the exterior 
normal vector to $\partial\Omega$.
\end{lemma}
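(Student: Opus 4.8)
The plan is to deduce \eqref{eq:BON1} by directly chaining Lemmas~\ref{lemma:first-order-G-a} and \ref{lemma:relation-matrices}, so that no genuinely new computation is required at this stage. First I would observe that, by the very definition of the scalar product on $H^{1,0}_0(\Omega,\C)$ adopted in Section~\ref{sec:equivalent-operator}, the left-hand side of the asserted identity is nothing but $(B'(0,\tfrac{1}{2})[(b,t)]\varphi_j,\varphi_k)_{H^{1,0}_0(\Omega,\C)}$, i.e.\ precisely the quantity appearing on the left in Lemma~\ref{lemma:relation-matrices}.

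Next I would invoke Lemma~\ref{lemma:relation-matrices} to rewrite this as $-\lambda^{-1}$ times the duality pairing $\phantom{a}_{(H^{1,0}_0(\Omega,\C))^\star}\langle G'(0,\tfrac{1}{2})[(b,t)]\varphi_j,\varphi_k\rangle_{H^{1,0}_0(\Omega,\C)}$; here the factor $\lambda^{-1}$ enters because $\varphi_j$ is an eigenfunction of $B_{(0,\sfrac{1}{2})}$ with eigenvalue $\lambda^{-1}$, and the identity itself rests on differentiating \eqref{eq:Ba} at $(0,\tfrac{1}{2})$. Finally I would substitute the explicit expression for that pairing given by Lemma~\ref{lemma:first-order-G-a}, namely $\int_{\partial\Omega}(b\cdot\nu)\,\frac{\partial\varphi_j}{\partial\nu}\,\overline{\frac{\partial\varphi_k}{\partial\nu}}+4t\int_\Omega(i\nabla+A_0^{\sfrac{1}{2}})\varphi_j\cdot A_0^{\sfrac{1}{2}}\overline{\varphi_k}$, and read off \eqref{eq:BON1}. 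I would also note that both the boundary term and the interior term are well defined thanks to Remark~\ref{rem:welldef} and the regularity \eqref{eq:regul} of the eigenfunctions away from $0$.

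Since the proof is a mere concatenation of two previously established identities, there is essentially no obstacle here: the real work has already been carried out in Lemma~\ref{lemma:first-order-La} and Lemma~\ref{lemma:first-order-G-a} (whose technical cores are relegated to the Appendix) and in the Leibniz-rule differentiation of $G_{(a,\alpha)}\circ B_{(a,\alpha)}=\text{Im}_{H^{1,0}_0(\Omega,\C)\to(H^{1,0}_0(\Omega,\C))^\star}$ underpinning Lemma~\ref{lemma:relation-matrices}. The only point worth a line of comment is that, the identity being linear in $(b,t)$ on both sides, it suffices to verify it for $(b,t)$ ranging over a basis of $\R^2\times\R$ and then extend by linearity, exactly as done at the end of the proof of Lemma~\ref{lemma:relation-matrices}.
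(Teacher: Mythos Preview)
Your proposal is correct and matches the paper's approach exactly: the paper states Lemma~\ref{lemma:first-order-B-a} as ``an immediate consequence of Lemmas~\ref{lemma:first-order-G-a} and \ref{lemma:relation-matrices}'' without giving a separate proof, and your concatenation of those two identities is precisely that consequence.
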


Expression \eqref{eq:BON1} is exactly the one we need to consider in 
$(ii)$ of Theorem~\ref{theorem:abstract-complex}.

\subsubsection{Expression of \eqref{eq:BON1} using the local properties of the eigenfunctions \eqref{eq:coeff}}

It happens that the first term in \eqref{eq:BON1} can be rewritten 
using the local properties of the eigenfunctions near $0$, i.e.\ 
as an expression involving the coefficients of $\varphi_j$, $j=1, 
\ldots, \mu$, in \eqref{eq:coeff}.

\begin{lemma} \label{lemma:symmetry}
Let $\Omega \subset \R^2$ be open, bounded, simply connected and Lipschitz. 
Let $\lambda \in \mathbb{R}^+$ be an eigenvalue of $(i \nabla + A_0^{\sfrac{1}{2}})^2$ 
of multiplicity $\mu \geq 1$ and $\varphi_j$, $j = 1, \ldots, \mu$, be the corresponding 
$K_0$-real eigenfunctions orthonormalized in $L^2(\Omega,\C)$. Let $c_j, \, d_j \in \R$ be the 
coefficients of $\varphi_j$ given in \eqref{eq:coeff}. Then for any $j,k = 1, \ldots,\mu$ 
and $b = (b_1, b_2) \in \R^2$
\[
\int_{\partial \Omega} ( b \cdot \nu ) \frac{\partial \varphi_j}{\partial \nu} 
\overline{\frac{\partial \varphi_k}{\partial \nu}} 
= \frac{\pi}{2} \left[ (c_j c_k - d_j d_k) b_1 + (c_j d_k + c_k d_j) b_2 \right], 
\]
where $\nu \, : \, \partial \Omega \to \mathbb{S}^1$ is the exterior normal 
to $\Omega$.
\end{lemma}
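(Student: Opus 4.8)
The strategy is to read the left-hand side as one endpoint of a magnetic Rellich--Pohozaev identity around the pole, in the spirit of \cite{AbatangeloFelli2015}: note that the diagonal case $j=k$ recovers, through Lemma~\ref{lemma:first-order-B-a} and first order perturbation theory, the formula $\nabla_a\lambda_{n_0}^{(a,\sfrac{1}{2})}\big|_{a=0}=\frac{\pi}{2}\,(c_j^2-d_j^2,\,2c_jd_j)$ for the gradient of a simple eigenvalue, whose vanishing (equivalently $c_j=d_j=0$) characterises the critical points of Theorem~\ref{thm:critical}. By Lemma~\ref{lemma:first-order-La} the quantity to be computed equals $\phantom{a}_{(H^{1,0}_0(\Omega,\C))^\star}\langle \mathcal{L}'(0,\tfrac12)[(b,0)]\varphi_j,\varphi_k\rangle_{H^{1,0}_0(\Omega,\C)}$, but I would work directly with the boundary integral. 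Fix $\delta>0$ small with $\overline{D_\delta(0)}\subset\Omega$; by \eqref{eq:regul}, $\varphi_j$ and $\varphi_k$ are smooth on $\overline\Omega\setminus\{0\}$, so on the punctured domain $\Omega_\delta:=\Omega\setminus\overline{D_\delta(0)}$ one may differentiate the eigenvalue equation $(i\nabla+A_0^{\sfrac{1}{2}})^2\varphi_j=\lambda\varphi_j$ along the constant field $b$, pair it suitably against the magnetic derivative $b\cdot(i\nabla+A_0^{\sfrac{1}{2}})\varphi_k$, symmetrise in $(j,k)$, and recognise the real part of the resulting expression as the divergence of an explicit vector field $\mathcal V_{j,k}$.

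Integrating this divergence identity over $\Omega_\delta$ and applying the divergence theorem equates the flux through $\partial\Omega$ with the flux through $\partial D_\delta(0)$, up to bulk terms. On $\partial\Omega$ one uses $\varphi_j=\varphi_k=0$, hence $(i\nabla+A_0^{\sfrac{1}{2}})\varphi_\bullet=i\,\tfrac{\partial\varphi_\bullet}{\partial\nu}\,\nu$ there, which collapses the $\partial\Omega$-flux exactly to $\int_{\partial\Omega}(b\cdot\nu)\,\frac{\partial\varphi_j}{\partial\nu}\,\overline{\frac{\partial\varphi_k}{\partial\nu}}$, the target quantity. The bulk terms split into an \emph{eigenvalue part}, equal to $-\lambda\int_{\Omega_\delta}b\cdot\nabla(\varphi_j\overline{\varphi_k})$, which is itself a boundary term by the divergence theorem, vanishing on $\partial\Omega$ and $O(\delta^2)$ on $\partial D_\delta(0)$ since $|\varphi_j\overline{\varphi_k}|=O(r)$ by \eqref{eq:coeff}; and a \emph{magnetic part}, which after using $\operatorname{div}A_0^{\sfrac{1}{2}}=0$ and the relation $b\cdot\nabla|A_0^{\sfrac{1}{2}}|^2=2A_0^{\sfrac{1}{2}}\cdot(b\cdot\nabla A_0^{\sfrac{1}{2}})$ reduces to an integral of the form $\int_{\Omega_\delta}(b\cdot\nabla A_0^{\sfrac{1}{2}})\cdot(i\nabla+A_0^{\sfrac{1}{2}})\varphi_j\,\overline{\varphi_k}$ together with its $(j,k)$-symmetrisation.

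The main obstacle is precisely this magnetic bulk part: because $b$ is \emph{constant} it does not vanish at the pole, and $b\cdot\nabla A_0^{\sfrac{1}{2}}$ is homogeneous of degree $-2$, so the integrand is only borderline non-integrable near $0$. I would handle it via the expansion \eqref{eq:coeff} and the induced expansion
\[
(i\nabla+A_0^{\sfrac{1}{2}})\varphi_j\bigl(r(\cos t,\sin t)\bigr)=i\,e^{it/2}r^{-1/2}\Bigl(\tfrac12 g_j(t)\,\widehat e_r+g_j'(t)\,\widehat e_t\Bigr)+O(r^{1/2}),\qquad g_j(t):=c_j\cos\tfrac t2+d_j\sin\tfrac t2,
\]
which shows that the would-be divergent angular density of the magnetic bulk term has vanishing average over $t\in[0,2\pi]$; hence the limit $\delta\to0^+$ exists and the magnetic part contributes only through a further convergent boundary flux over $\partial D_\delta(0)$. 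Collecting everything, the identity reduces, as $\delta\to0^+$, to $\int_{\partial\Omega}(b\cdot\nu)\,\frac{\partial\varphi_j}{\partial\nu}\,\overline{\frac{\partial\varphi_k}{\partial\nu}}=\lim_{\delta\to0^+}\int_{\partial D_\delta(0)}(\text{flux density})\,\mathrm{d}\sigma$.

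It remains to evaluate this limit, which is an explicit trigonometric computation. By the expansion above the flux density on $\partial D_\delta(0)$ is homogeneous of degree $-1$ in $r$ to leading order, so the factor $\delta^{-1}$ cancels against $\mathrm{d}\sigma=\delta\,\mathrm{d}t$, leaving an integral over $t\in[0,2\pi]$ of a trigonometric polynomial in $\cos\tfrac t2,\sin\tfrac t2$ whose coefficients are bilinear in $(c_j,d_j)$, $(c_k,d_k)$ and linear in $b$, via $b\cdot\widehat e_r=b_1\cos t+b_2\sin t$ and $b\cdot\widehat e_t=-b_1\sin t+b_2\cos t$. The integrand is real, a consequence of the $K_0$-reality of $\varphi_j,\varphi_k$ through \eqref{eq:relation-2}, and only a few monomials survive the angular integration; evaluating them yields exactly $\frac{\pi}{2}\bigl[(c_jc_k-d_jd_k)b_1+(c_jd_k+c_kd_j)b_2\bigr]$, which is the asserted formula.
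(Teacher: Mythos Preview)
Your strategy—reduce the $\partial\Omega$ integral to a flux on $\partial D_\delta(0)$ and evaluate via the expansion \eqref{eq:coeff}—is the same as the paper's, but the paper reaches that reduction by a shorter and cleaner route that eliminates the ``magnetic bulk part'' you worry about. Instead of differentiating the equation and symmetrising in $(j,k)$, the paper simply tests $[(i\nabla+A_0^{\sfrac{1}{2}})^2-\lambda]\varphi_j=0$ on $\Omega\setminus D_\delta(0)$ against the single function $(i\nabla+A_0^{\sfrac{1}{2}})\varphi_k\cdot b$, integrates by parts twice, and then invokes the pointwise identity
\[
(i\nabla+A_0^{\sfrac{1}{2}})^2\big[(i\nabla+A_0^{\sfrac{1}{2}})\varphi_k\cdot b\big]=\lambda\,(i\nabla+A_0^{\sfrac{1}{2}})\varphi_k\cdot b\qquad\text{in }\Omega\setminus\{0\},
\]
which follows from $\nabla\cdot A_0^{\sfrac{1}{2}}=0$ and $\nabla\times A_0^{\sfrac{1}{2}}=0$ away from the pole. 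This commutation makes the bulk integral vanish \emph{exactly}; only two boundary terms on $\partial D_\delta(0)$ survive, and they are then computed from the expansion, each contributing $\tfrac{\pi}{4}[(c_jc_k-d_jd_k)b_1+(c_jd_k+c_kd_j)b_2]$.

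In your sketch the sentence ``the magnetic part contributes only through a further convergent boundary flux over $\partial D_\delta(0)$'' is the soft spot. Even granting that the leading $r^{-2}$ angular density integrates to zero over $t\in[0,2\pi]$ (which is true), the remaining bulk integral $\int_{\Omega_\delta}(b\cdot\nabla A_0^{\sfrac{1}{2}})\cdot(i\nabla+A_0^{\sfrac{1}{2}})\varphi_j\,\overline{\varphi_k}$ is then merely bounded as $\delta\to0$, not obviously zero or a pure boundary term; you would still owe either its explicit evaluation or a further integration by parts turning it into flux. The paper's commutation identity is precisely the device that makes this step disappear, and with it the need for symmetrisation, the separate eigenvalue/magnetic bulk bookkeeping, and the vanishing-average argument.
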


\begin{proof}
\begin{claim}
We first prove that
\begin{align*}
\int_{\partial \Omega} ( b \cdot \nu ) \frac{\partial \varphi_j}{\partial \nu} 
\overline{\frac{\partial \varphi_k}{\partial \nu}} 
= \lim_{\delta \to 0} 
& \Big( \int_{\partial D_\delta(0)} (i \nabla + A_0^{\sfrac{1}{2}}) 
\varphi_j \cdot \nu 
\, \overline{(i \nabla + A_0^{\sfrac{1}{2}}) \varphi_k \cdot b} \\ 
& \qquad \qquad + \int_{\partial D_\delta(0)} \varphi_j 
\, \overline{(i\nabla +A_0^{\sfrac{1}{2}}) 
[ (i \nabla + A_0^{\sfrac{1}{2}}) \varphi_k \cdot b]} \, \Big),
\end{align*}
where $\nu \, : \, \partial \Omega \to \mathbb{S}^1$ or $\nu \, : \, 
\partial D_\delta (0) \to \mathbb{S}^1$ are respectively the exterior 
normal to $\partial \Omega$ or to $\partial D_\delta (0)$.
\end{claim}

\begin{proofclaim}
We test the equation satisfied by $\varphi_j$ in $\Omega 
\setminus D_\delta(0)$ on $(i \nabla + A_0^{\sfrac{1}{2}}) 
\varphi_k \cdot b$ and take the limit $\delta \to 0$, and 
we notice that everything is well defined since we remove 
a small set containing the singular point $0$, see \eqref{eq:regul},
\begin{align*}
& 0 = \lim_{\delta \to 0} \int_{\Omega \setminus D_\delta(0)}
[ (i \nabla + A_0^{\sfrac{1}{2}})^2 - \lambda] \varphi_j \, 
\overline{(i \nabla + A_0^{\sfrac{1}{2}}) \varphi_k \cdot b}  \\
& = \lim_{\delta \to 0} \int_{\Omega \setminus D_\delta (0)} 
(i \nabla + A_0^{\sfrac{1}{2}}) \varphi_j \cdot 
\overline{(i \nabla + A_0^{\sfrac{1}{2}}) [ (i \nabla + A_0^{\sfrac{1}{2}}) \varphi_k \cdot b]} 
- \lambda \varphi_j \, \overline{(i \nabla + A_0^{\sfrac{1}{2}}) \varphi_k \cdot b} \\
& + \lim_{\delta \to 0 } \Big( i \int_{\partial \Omega} (i \nabla + A_0^{\sfrac{1}{2}}) \varphi_j \cdot \nu \, 
\overline{(i \nabla + A_0^{\sfrac{1}{2}}) \varphi_k \cdot b}
- i \int_{\partial D_\delta (0)} (i \nabla + A_0^{\sfrac{1}{2}}) \varphi_j \cdot \nu \, 
\overline{(i \nabla + A_0^{\sfrac{1}{2}}) \varphi_k \cdot b} \Big) \\
& = \lim_{\delta \to 0} \int_{\Omega \setminus D_\delta (0)}
\varphi_j \, \overline{(i \nabla + A_0^{\sfrac{1}{2}})^2 [ (i \nabla + A_0^{\sfrac{1}{2}}) \varphi_k \cdot b]} 
- \lambda \varphi_j \, \overline{(i \nabla + A_0^{\sfrac{1}{2}}) \varphi_k \cdot b} \\
& + \lim_{\delta \to 0 } \Big( i \int_{\partial \Omega} (i \nabla + A_0^{\sfrac{1}{2}}) \varphi_j \cdot \nu \, 
\overline{(i \nabla + A_0^{\sfrac{1}{2}}) \varphi_k \cdot b} 
- i \int_{\partial D_\delta(0)} (i \nabla + A_0^{\sfrac{1}{2}}) \varphi_j \cdot \nu \, 
\overline{(i \nabla + A_0^{\sfrac{1}{2}}) \varphi_k \cdot b} \Big) \\
& - \lim_{\delta \to 0} i \int_{\partial D_\delta (0)} 
\varphi_j \, \overline{(i \nabla + A_0^{\sfrac{1}{2}}) [ (i \nabla + A_0^{\sfrac{1}{2}}) \varphi_k \cdot b]} \cdot \nu.
\end{align*}
We have that 
\begin{align*}
(i \nabla + A_0^{\sfrac{1}{2}})^2 
& \left[ (i \nabla + A_0^{\sfrac{1}{2}}) \varphi_j \cdot b \right] \\
& = \sum_{k=1}^2 (i \nabla + A_0^{\sfrac{1}{2}})^2 
( i \partial_k \varphi_j b_k + A_{0,k}^{\frac{1}{2}} \varphi_j b_k ) 
= (i \nabla + A_0^{\sfrac{1}{2}}) 
\left[ (i \nabla + A_0^{\sfrac{1}{2}})^2 \varphi_j \right] \cdot b \\
& + \sum_{k,l =1}^2 \Big[ 2 (\partial_k A_{0,l}^{\frac{1}{2}} 
- \partial_l A_{0,k}^{\frac{1}{2}}) \partial_l \varphi_j b_k 
+ 2 i b_k A_{0,l}^{\frac{1}{2}} (\partial_l A_{0,k}^{\frac{1}{2}} 
- \partial_k A_{0,l}^{\frac{1}{2}}) \varphi_j 
- \partial_l^2 A_{0,k}^{\frac{1}{2}} b_k \varphi_j \Big]  \\
& = \lambda \, \left[ (i \nabla + A_0^{\sfrac{1}{2}}) \varphi_j \cdot b \right],
\end{align*}
since $\nabla \cdot A_0^{\sfrac{1}{2}} = 0$ and $\nabla \times A_0^{\sfrac{1}{2}} = 0$ in 
$\Omega \setminus \{0\}$. Therefore, the first two terms cancel and this proves the claim 
since $\varphi_j = \varphi_k = 0$ on $\partial \Omega$.
\end{proofclaim}

\smallskip

To prove the lemma, we use the explicit expression of \eqref{eq:coeff}. 
First we compute for $j = 1, \ldots, \mu$
\[
(i \nabla + A_0^{\sfrac{1}{2}}) \varphi_j 
= \frac{i}{2} e^{i \frac{t}{2}} r^{- 1/2} 
\Big( c_j \cos \frac{t}{2} - d_j \sin \frac{t}{2}, 
c_j \sin \frac{t}{2} + d_j \cos \frac{t}{2} \Big) 
+ R_1(r,t),
\]
where $R_1(r,t) = o(r^{-1/2})$ as $r\to 0^+$ uniformly with respect 
to $t\in[0,2\pi]$. Then, if $\nu = (\cos t, \sin t)$ is the exterior 
normal to $\partial D_{\delta}(0)$
\begin{equation} \label{eq:eq1}
(i \nabla + A_0^{\sfrac{1}{2}}) \varphi_j \cdot \nu 
= \frac{i}{2} e^{i \frac{t}{2}} r^{-1/2} 
\Big( c_j \cos \frac{t}{2} + d_j \sin \frac{t}{2} \Big) 
+ R_2(r,t),
\end{equation}
where $R_2(r,t)=o(r^{-1/2})$ as $r\to 0^+$ uniformly with respect 
to $t\in[0,2\pi]$, while if $b = (b_1, b_2)$ it holds that 
\begin{equation} \label{eq:eq2}
(i \nabla + A_0^{\sfrac{1}{2}}) \varphi_j \cdot b 
= \frac{i}{2} e^{i \frac{t}{2}} r^{-1/2} 
\Big( c_j b_1 \cos \frac{t}{2} - d_j b_1 \sin \frac{t}{2} 
+ c_j b_2 \sin \frac{t}{2} + d_j b_2 \cos \frac{t}{2} \Big) 
+  R_3(r,t),
\end{equation}
where $R_3(r,t) = o(r^{-1/2})$ as $r\to 0^+$ uniformly with respect 
to $t\in[0,2\pi]$. Finally,
\begin{align*}
(i \nabla + A_0^{\sfrac{1}{2}}) 
\left[ (i\nabla + A_0^{\sfrac{1}{2}}) \varphi_j \cdot b \right] 
& = \frac{1}{4} e^{i \frac{t}{2}} r^{-3/2} 
\Big( c_j b_1 \cos \frac{3 t}{2} - d_j b_1 \sin \frac{3 t}{2} 
+ c_j b_2 \sin \frac{3 t}{2} + d_j b_2 \cos \frac{3 t}{2},\\
& c_j b_1 \sin \frac{3 t}{2} + d_j b_1 \sin \frac{3 t}{2} 
- c_j b_2 \cos \frac{3 t}{2} + d_j b_2 \sin \frac{3 t}{2} \Big) 
+ R_4(r,t),
\end{align*}
where $R_4(r,t) = o(r^{-3/2})$ as $r\to 0^+$ uniformly with respect 
to $t\in[0,2\pi]$, and
\begin{equation} \label{eq:eq3}
\begin{split}
(i \nabla + A_0^{\sfrac{1}{2}}) 
& [ (i\nabla + A_0^{\sfrac{1}{2}}) \varphi_j \cdot b] \cdot \nu  \\
& = \frac{1}{4} e^{i \frac{t}{2}} r^{-3/2} 
\Big( c_j b_1 \cos \frac{t}{2} - d_j b_1 \sin \frac{t}{2}
+ c_j b_2 \sin \frac{t}{2} + d_j b_2 \cos \frac{t}{2} \Big) 
+ R_5(r,t),
\end{split}
\end{equation}
where $R_5(r,t) = o(r^{-3/2})$ as $r\to 0^+$ uniformly with respect 
to $t\in[0,2\pi]$. Then using \eqref{eq:eq1} and \eqref{eq:eq2} and 
elementary calculations we have that
\begin{equation} \label{eq:eq4}
\lim_{\delta \to 0} \int_{\partial D_\delta(0)} (i \nabla + A_0^{\sfrac{1}{2}}) 
\varphi_j \cdot \nu \, 
\overline{ ( i \nabla + A_0^{\sfrac{1}{2}}) \varphi_k \cdot b} 
= \frac{\pi}{4} \left[ (c_j c_k - d_j d_k) b_1 + (c_j d_k + c_k d_j) b_2 \right],
\end{equation}
and using \eqref{eq:coeff} and \eqref{eq:eq3}
\begin{equation} \label{eq:eq5}
\lim_{\delta \to 0} \int_{\partial D_\delta(0)} \varphi_j \, 
\overline{(i \nabla + A_0^{\sfrac{1}{2}}) [ (i\nabla + A_0^{\sfrac{1}{2}}) 
\varphi_k \cdot b] \cdot \nu} 
= \frac{\pi}{4} \left[ (c_j c_k - d_j d_k) b_1 + (c_j d_k + c_k d_j) b_2 \right].
\end{equation}
Summing \eqref{eq:eq4} and \eqref{eq:eq5} gives the lemma.
\end{proof}

\resetclaim

We are not able to give an explicit expression of the second term in 
\eqref{eq:BON1}, as we have for the first one, see Lemma~\ref{lemma:symmetry}. 
However, we can say something using explicitly the \emph{real} structure 
of the operator, and more precisely the $K_0$-reality of the eigenfunctions in 
\eqref{eq:coeff}.

\begin{lemma} \label{lemma:antisymmetry}
Let $\lambda \in \R^+$ be an eigenvalue of $(i \nabla + A_0^{\sfrac{1}{2}})^2$ 
of multiplicity $\mu \geq 1$, and let $\varphi_j$, $j = 1, \ldots, \mu$, be the 
corresponding $K_0$-real eigenfunctions orthonormalized in $L^2(\Omega,\C)$. Let
\[
i R_{jk} := 4 \int_{\Omega} (i \nabla + A_0^{\sfrac{1}{2}}) \varphi_j \cdot A_0^{\sfrac{1}{2}} \overline{\varphi_k}.
\]
Then for any $j,k = 1, \ldots, \mu$
\[
\overline{R_{jk}} = R_{jk} \quad \text{ i.e.\ } R_{jk} \text{ is real valued}
\]
and 
\[
R_{jk} = - R_{kj}.
\]
\end{lemma}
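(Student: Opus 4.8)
The plan is to exploit the two structural facts available for $\alpha=\sfrac{1}{2}$: the $K_0$-reality of the eigenfunctions, encoded in \eqref{eq:relation-2}, and the fact that $A_0^{\sfrac{1}{2}}$ is divergence free in $\Omega\setminus\{0\}$. I would prove the reality assertion $\overline{R_{jk}}=R_{jk}$ first, as it is purely algebraic. Conjugating the defining identity $iR_{jk}=4\int_\Omega(i\nabla+A_0^{\sfrac{1}{2}})\varphi_j\cdot A_0^{\sfrac{1}{2}}\overline{\varphi_k}$, and using that $A_0^{\sfrac{1}{2}}$ is a real field, gives $\overline{iR_{jk}}=4\int_\Omega\overline{(i\nabla+A_0^{\sfrac{1}{2}})\varphi_j}\cdot A_0^{\sfrac{1}{2}}\varphi_k$. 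Substituting the $K_0$-reality relations $\varphi_k=e^{i\theta_0}\overline{\varphi_k}$ and, from \eqref{eq:relation-2}, $\overline{(i\nabla+A_0^{\sfrac{1}{2}})\varphi_j}=-e^{-i\theta_0}(i\nabla+A_0^{\sfrac{1}{2}})\varphi_j$, the factors $e^{\pm i\theta_0}$ cancel and a single minus sign survives, so $\overline{iR_{jk}}=-iR_{jk}$, i.e.\ $R_{jk}\in\R$. Here \eqref{eq:relation-2}, proved for smooth compactly supported $K_0$-real functions, passes to $\varphi_j,\varphi_k\in H^{1,0}_0(\Omega,\C)$ by density, and the integral is absolutely convergent since near $0$ one has $|(i\nabla+A_0^{\sfrac{1}{2}})\varphi_j|=O(r^{-1/2})$ and $|\varphi_k|=O(r^{1/2})$ by \eqref{eq:coeff}, while $|A_0^{\sfrac{1}{2}}|=O(r^{-1})$, exactly as in the proof of Lemma~\ref{lemma:symmetry}.

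For the antisymmetry I would establish $iR_{jk}=\overline{iR_{kj}}$; combined with the reality just obtained this yields $R_{jk}=-R_{kj}$. Expanding the integrands of $iR_{jk}$ and of $\overline{iR_{kj}}$ and subtracting, the terms involving $|A_0^{\sfrac{1}{2}}|^2\varphi_j\overline{\varphi_k}$ cancel and the remaining ones combine into a total derivative:
\[
iR_{jk}-\overline{iR_{kj}}=4i\int_\Omega A_0^{\sfrac{1}{2}}\cdot\nabla(\varphi_j\overline{\varphi_k}).
\]
So the whole matter reduces to showing $\int_\Omega A_0^{\sfrac{1}{2}}\cdot\nabla(\varphi_j\overline{\varphi_k})=0$. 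I would compute this as $\lim_{\delta\to0}\int_{\Omega\setminus D_\delta(0)}A_0^{\sfrac{1}{2}}\cdot\nabla(\varphi_j\overline{\varphi_k})$ (legitimate by absolute convergence, and removing a neighbourhood of the pole as in Lemma~\ref{lemma:symmetry}), integrate by parts, and note that the bulk term vanishes because $\nabla\cdot A_0^{\sfrac{1}{2}}=0$ in $\Omega\setminus\{0\}$, the part of the boundary term on $\partial\Omega$ vanishes because $\varphi_j=\varphi_k=0$ there, and the part on $\partial D_\delta(0)$ is $O(\delta)$ since $|A_0^{\sfrac{1}{2}}|=O(\delta^{-1})$ and $|\varphi_j\overline{\varphi_k}|=O(\delta)$ on that circle, which has length $2\pi\delta$. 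Letting $\delta\to0$ finishes the proof.

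The only genuinely delicate point is the behaviour at the pole: one must check that all the integrals written down are absolutely convergent, that the manipulations commute with the $\delta\to0$ limit, and that the contributions over $\partial D_\delta(0)$ really decay. All of this is controlled by the local expansion \eqref{eq:coeff}, in the same way as in the proof of Lemma~\ref{lemma:symmetry}; everything else is elementary algebra together with \eqref{eq:relation-2} and the identity $\nabla\cdot A_0^{\sfrac{1}{2}}=0$ on $\Omega\setminus\{0\}$, both of which are already available. In particular, both conclusions of the lemma genuinely use the hypothesis that the $\varphi_j$ are chosen $K_0$-real.
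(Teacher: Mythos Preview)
Your proof is correct and follows essentially the same route as the paper: the reality of $R_{jk}$ is obtained from the $K_0$-reality relation \eqref{eq:relation-2} exactly as you do, and the antisymmetry from an integration by parts exploiting $\nabla\cdot A_0^{\sfrac{1}{2}}=0$ in $\Omega\setminus\{0\}$. The paper packages both conclusions into a single chain of equalities, and leaves implicit the justification of the integration by parts near the pole that you spell out via the $\delta\to0$ excision argument; your added detail on this point is a genuine improvement in rigor.
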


\begin{proof}
The proof of this lemma relies strongly on the $K_0$-reality of the 
eigenfunctions. Using first \eqref{eq:relation-2} and next an integration 
by part and the fact that $\nabla \cdot A_0^{\sfrac{1}{2}} = 0$ in 
$\Omega \setminus\{0\}$, we have that
\[
\int_{\Omega} (i \nabla + A_0^{\sfrac{1}{2}}) \varphi_j 
\cdot A_0^{\sfrac{1}{2}} \overline{\varphi_k} 
 = - \int_{\Omega} \overline{(i \nabla + A_0^{\sfrac{1}{2}}) \varphi_j} 
 \cdot A_0^{\sfrac{1}{2}} \varphi_k 
 = - \int_{\Omega}  (i \nabla + A_0^{\sfrac{1}{2}}) \varphi_k 
 \cdot A_0^{\sfrac{1}{2}} \overline{\varphi_j}. 
\]
This proves the lemma.
\end{proof}

From Lemma~\ref{lemma:antisymmetry} we immediately see that 
$R_{jj} = 0$ for $j=1,\ldots,\mu$.

\subsection{Proof of Theorem~\ref{t:main}} \label{subsec:proofmain}

In Theorem~\ref{theorem:abstract-complex}, the Banach space $B$ is given by 
$\R^2 \times \R$ and the fix point in $B$ is $(0,\tfrac{1}{2})$. The Hilbert 
space $X$ is $H^{1,0}_0(\Omega,\C)$ and the family of compact self-adjoint 
linear operators is given by $\{ B_{(a,\alpha)} \, : \,  H^{1,0}_0(\Omega,\C) \to 
H^{1,0}_0(\Omega,\C) \, : \, (a,\alpha) \in D_R(0) \times \mathcal W\}$, 
being $\mathcal W$ a small neighborhood of $1/2$. The non perturbed operator 
is $B_{(0,\sfrac{1}{2})} = [(i \nabla + A_0^{\sfrac{1}{2}})^2]^{-1} 
\circ \text{Im}_{H^{1,0}_0(\Omega,\C) \to ( H^{1,0}_0(\Omega,\C) )^\star}$. 
We assume to have an eigenvalue $\lambda \in \R^+$ of $(i \nabla + A_0^{\sfrac{1}{2}})^2$ 
(and therefore an eigenvalue $\lambda^{-1} \in \R^+$ of $B_{(0,\sfrac{1}{2})}$) 
of multiplicity $\mu = 2$, and two corresponding $K_0$-real eigenfunctions 
$\varphi_j$, $j = 1,2$, orthonormalized in $L^2(\Omega,\C)$ and verifying \eqref{eq:coeff}. 

\smallskip

Lemma~\ref{l:BaC1} tells us that condition $(i)$ of Theorem~\ref{theorem:abstract-complex} 
is satisfied. To prove condition $(ii)$ of Theorem~\ref{theorem:abstract-complex} 
it will be sufficient to prove that the function
$F \, : \, \R^2 \times \R \to L_h(\R^2,\R^2)$ given by 
\[
(b, t) \mapsto 
\left( \int_{\Omega} (i \nabla + A_0^{\sfrac{1}{2}}) 
( B'(0,\tfrac{1}{2})[(b,t)] \varphi_j ) \, 
\cdot \overline{(i \nabla + A_0^{\sfrac{1}{2}}) \varphi_k} 
\right)_{j,k=1,2}
\]
is such that
\[
\text{Im} F + [I] = L_h(\R^2,\R^2).
\]
This expression is exactly the one given by \eqref{eq:BON1}. Using 
\eqref{eq:BON1}, Lemmas~\ref{lemma:symmetry} and \ref{lemma:antisymmetry}, 
forgetting some non zero constants ($- \lambda^{-1}$ and $\tfrac{\pi}{4}$) 
for better readibility (this can be done through a renormalization of the 
parameters), we need to show that the application sending $(b, t, \mu) \in 
\R^2 \times \R \times \R$ on
\begin{align*}
\begin{pmatrix}
(c_1^2 - d_1^2) b_1 + 2 c_1 d_1 b_2 + \mu 
& (c_1c_2 - d_1 d_2) b_1 + (c_1 d_2 + c_2 d_1) b_2 + i t R_{12} \\
(c_1c_2 - d_1 d_2) b_1 + (c_1 d_2 + c_2 d_1) b_2 - i t R_{12} 
& (c_2^2 - d_2^2) b_1 + 2 c_2 d_2 b_2 + \mu
\end{pmatrix}
\end{align*}
gives all the  $2 \times 2$ hermitian matrices; or equivalently that 
the application sending $(b,\mu) \in \R^2 \times \R$ on
\begin{align*}
\begin{pmatrix}
(c_1^2 - d_1^2) b_1 + 2 c_1 d_1 b_2 + \mu 
& (c_1c_2 - d_1 d_2) b_1 + (c_1 d_2 + c_2 d_1) b_2 \\
(c_1c_2 - d_1 d_2) b_1 + (c_1 d_2 + c_2 d_1) b_2 
& (c_2^2 - d_2^2) b_1 + 2 c_2 d_2 b_2 + \mu
\end{pmatrix}
\end{align*}
gives all the $2 \times 2$ symmetric matrices, since $c_j, \, d_j \in \R$ 
for $j = 1,2$ by \eqref{eq:coeff}, and the application sending $t \in \R$ 
on
\begin{align*}
\begin{pmatrix}
0 & t R_{12} \\
- t R_{12} & 0
\end{pmatrix}
\end{align*}
gives all the $2 \times 2$ antisymmetric matrices, since $R_{12} \in \R$ 
and $R_{11} = R_{22} = 0$ by Lemma~\ref{lemma:antisymmetry}.

Those matrices can be rewritten in a more suitable way. Equation 
\eqref{eq:coeff} also reads for $j = 1,2$
\[
\varphi_j(r (\cos t, \sin t)) = m_j e^{i \frac{t}{2}} r^{1/2} 
\cos \frac{t - \alpha_j}{2} + f_j(r,t), 
\]
where $f_j(r,t) = o(r^{1/2})$ as $r \to 0^+$ uniformly in $t \in [0,2\pi]$, 
and
\[
c_j = m_j \cos \frac{\alpha_j}{2} \quad \text{ and } 
\quad d_j = m_j \sin \frac{\alpha_j}{2},
\]
with $\alpha_j \in [0, 2\pi)$ and $m_j \in \R$ possibly zero. 
We notice that if $m_j \neq 0$, then $c_j^2 + d_j^2 \neq 0$ and the 
eigenfunction $\varphi_j$ has a zero of order $1/2$ at $0$, i.e.\ a 
unique nodal line ending at $0$. The angle of such a nodal line is 
related to $\alpha_j$ by
\[
\text{angle of the nodal line of } \varphi_j = \alpha_j + \pi + 2 k \pi, 
\quad k \in \mathbb{Z}.
\]
Using this new expression, our first $2 \times 2$ symmetric matrix writes 
\[
\begin{pmatrix}
m_1^2 ( \cos \alpha_1 b_1 + \sin \alpha_1 b_2 ) + \mu 
& m_1 m_2 ( \cos \frac{\alpha_1 + \alpha_2}{2} b_1 
+ \sin \frac{\alpha_1 + \alpha_2}{2} b_2 ) \\
m_1 m_2 (\cos \frac{\alpha_1 + \alpha_2}{2} b_1  
+ \sin \frac{\alpha_1 + \alpha_2}{2} b_2 ) 
& m_2^2 ( \cos \alpha_2 b_1 + \sin \alpha_2 b_2 ) + \mu
 \end{pmatrix}.
\]
Asking that such a matrix gives all $2\times2$ symmetric real 
matrices is equivalent to ask the following matrix to be 
surjective in $\R^3$ 
\[
M:=\begin{pmatrix}
m_1^2 \cos\alpha_1 
& m_1^2 \sin\alpha_1 
& 1 \\
m_2^2 \cos \alpha_2 
& m_2^2 \sin\alpha_2 
& 1 \\
m_1 m_2 \cos \frac{\alpha_1 + \alpha_2}{2} 
& m_1 m_2 \sin \frac{\alpha_1 + \alpha_2}{2} 
& 0
\end{pmatrix}.
\]
This will be the case if and only if $\mathrm{det}M \neq 0$, that is
\[
\mathrm{det}M =  m_1 m_2 (m_1^2 + m_2^2) 
\sin \frac{(\alpha_1-\alpha_2)}{2} \neq 0. 
\]
This happens only if the following conditions are satisfied
\begin{itemize}
\item[$(1)$] $m_1 \neq 0$ and $m_2 \neq 0$,
\item[$(2)$] $\alpha_1 \neq \alpha_2 + 2k \pi$, $k \in \mathbb{Z}$.
\end{itemize}
Those conditions mean that they do not exist a system of orthonormal 
eigenfunctions such that at least one has a zero of order strictly 
greater than $1/2$ at $0$, i.e.\ more than one nodal line ending at $0$.
In term of the coefficients $c_j$ and $d_j$, $j=1,2$, the above 
conditions can be rewritten as
\begin{itemize}
\item[$(1')$] $c_j^2 + d_j^2 \neq 0$, $j=1,2$,
\item[$(2')$] there does not exist $\gamma \in \R$ such that $(c_1,d_1) = \gamma (c_2,d_2)$.
\end{itemize}

To prove that the second matrix gives all $2 \times 2$ antisymmetric 
real matrices, it is sufficient to ask 
\begin{itemize}
\item[$(3)$] $R_{12} \neq 0$.
\end{itemize} 
Therefore, Theorem~\ref{theorem:abstract-complex} may be applied if 
conditions $(1)$--$(3)$ are all satisfied, or equivalently $(1')$--$(3)$.

\appendix

\section{Proof of Lemma~\ref{l:BaC1}} \label{app:proof}

This Lemma is proved in five claims. For this, we follow closely the argument 
presented in \cite{LupoMicheletti1993}.

We call here $\mathcal{W} \subset (0,1)$ any neighborhood of $\{\tfrac{1}{2}\}$ 
such that $\overline{\mathcal{W}} \subset\subset (0,1)$.
\begin{claim} 
Let $(a,\alpha) \in D_R(0) \times \mathcal{W}$. We claim that the map 
$(a,\alpha) \mapsto E_{(a,\alpha)}$ is $C^1 \Big(D_R(0) \times \mathcal{W}, 
BL \big (H^{1,0}_0(\Omega,\C) \times H^{1,0}_0(\Omega,\C), \C \big) \Big)$.
\end{claim}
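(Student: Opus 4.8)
The plan is to render the dependence of $E_{(a,\alpha)}$ on $(a,\alpha)$ fully explicit by transporting the integral \eqref{eq:bilinear-form} through the diffeomorphism $\Phi_a$, and then to read off $C^1$ (in fact $C^\infty$) regularity from the smoothness of the resulting coefficients, exactly in the spirit of \cite{LupoMicheletti1993}. Concretely, I would set $w:=\gamma_a^{-1}(u)$ and $z:=\gamma_a^{-1}(v)$ in $H^{1,a}_0(\Omega,\C)$, so that by \eqref{eq:gamma_a} one has $w\circ\Phi_a=(\sqrt{J_a})^{-1}u=:\tilde u$ and $z\circ\Phi_a=(\sqrt{J_a})^{-1}v=:\tilde v$, both in $H^{1,0}_0(\Omega,\C)$ (since $(\sqrt{J_a})^{-1}\in C^\infty(\R^2)$ is bounded above and below on $\overline{\Omega}$). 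The identity
\[
[(i\nabla+A_a^\alpha)w]\circ\Phi_a=(i\nabla+A_0^\alpha)\tilde u+F(a,\alpha)\tilde u
\]
proved inside the proof of Lemma~\ref{lemma:new-operator} (it follows from \eqref{eq:derivative-eta-a}), together with the change of variables $y=\Phi_a(x)$, $\,dy=J_a(x)\,dx$, turns \eqref{eq:bilinear-form} into
\[
E_{(a,\alpha)}(u,v)=\int_{\Omega}\big[(i\nabla+A_0^\alpha)\tilde u+F(a,\alpha)\tilde u\big]\cdot\overline{\big[(i\nabla+A_0^\alpha)\tilde v+F(a,\alpha)\tilde v\big]}\,J_a\,dx,
\]
with $F(a,\alpha)\tilde u=(A_a^\alpha\circ\Phi_a-A_0^\alpha)\tilde u-iJ_a^{-1}(a\cdot\nabla\tilde u)\nabla\xi$.

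The heart of the argument is then that each coefficient above depends smoothly on $(a,\alpha)\in D_R(0)\times\mathcal W$: by Lemma~\ref{lemma:smooth-map} this holds for $J_a$, $(\sqrt{J_a})^{-1}$ and $\nabla J_a$ (with values in $C^\infty(\R^2)$), it is trivial for $\nabla\xi$ and for the linear factor $a$, and for $A_a^\alpha\circ\Phi_a-A_0^\alpha$ it follows from the explicit computation in the proof of Lemma~\ref{lemma:new-operator}, recalling that this difference vanishes on $D_{\overline{R}}(0)$ — there $\xi\equiv 1$, so $\Phi_a(x)=x+a$ and $A_a^\alpha(\Phi_a(x))=A_0^\alpha(x)$ — hence extends to a genuine $C^\infty(\R^2)$ function. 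Since multiplication by a function of $C^\infty(\R^2)$ is a bounded operator on $L^2(\Omega,\C)$ and on $H^{1,0}_0(\Omega,\C)$ depending continuously and linearly on that function, and since $(i\nabla+A_0^\alpha)=i\nabla+\alpha A_0^1$ depends affinely on $\alpha$ as a bounded operator $H^{1,0}_0(\Omega,\C)\to L^2(\Omega,\C)$ by the Hardy inequality of Subsection~\ref{subsec:functional}, the maps $(a,\alpha)\mapsto\big(u\mapsto(i\nabla+A_0^\alpha)((\sqrt{J_a})^{-1}u)\big)$ and $(a,\alpha)\mapsto\big(u\mapsto F(a,\alpha)((\sqrt{J_a})^{-1}u)\big)$ are smooth with values in $BL(H^{1,0}_0(\Omega,\C),L^2(\Omega,\C))$. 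Finally $E_{(a,\alpha)}$ is obtained from these by composition with the fixed bounded bilinear/antilinear operations (pointwise product of two $L^2$ functions into $L^1$, conjugation, multiplication by the $L^\infty$ function $J_a$, integration over $\Omega$); since composing a smooth map with a bounded multilinear map is again smooth, $(a,\alpha)\mapsto E_{(a,\alpha)}$ is $C^\infty$, a fortiori $C^1$, as a map into $BL\big(H^{1,0}_0(\Omega,\C)\times H^{1,0}_0(\Omega,\C),\C\big)$.

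I do not expect a genuine obstacle; the two points deserving care are the legitimacy of the change of variables at the level of $H^{1,0}_0(\Omega,\C)$ (which is exactly the content of the already-available Lemma~\ref{lemma:new-operator}) and the cancellation of the Aharonov--Bohm singularities of $A_a^\alpha\circ\Phi_a$ and $A_0^\alpha$, to be checked by hand from $\Phi_a(x)=x+a$ near $0$ and the covariance $A_a^\alpha(\,\cdot+a)=A_0^\alpha$. As an alternative, the claim also follows at once by observing that \eqref{eq:operator-G-a}, \eqref{eq:gamma-a-dual} and \eqref{eq:magn-lap} give $E_{(a,\alpha)}(u,v)={}_{(H^{1,0}_0(\Omega,\C))^\star}\langle G_{(a,\alpha)}u,v\rangle_{H^{1,0}_0(\Omega,\C)}$ for all $u,v\in H^{1,0}_0(\Omega,\C)$, and then combining the $C^\infty$ regularity of $(a,\alpha)\mapsto G_{(a,\alpha)}$ from Lemma~\ref{lemma:operator-G-a} with the canonical isometry between $BL(H^{1,0}_0(\Omega,\C),(H^{1,0}_0(\Omega,\C))^\star)$ and the space of bounded sesquilinear forms on $H^{1,0}_0(\Omega,\C)$.
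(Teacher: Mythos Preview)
Your proposal is correct, and the ``alternative'' you give at the end is exactly the paper's proof: the paper derives the identity $E_{(a,\alpha)}(\hat u,\hat v)=\int_\Omega G_{(a,\alpha)}\hat u\,\overline{\hat v}$ (i.e.\ the duality pairing $\langle G_{(a,\alpha)}\hat u,\hat v\rangle$) from \eqref{eq:operator-G-a} and the $L^2$-isometry property of $\gamma_a$, and then invokes the $C^\infty$ regularity of $(a,\alpha)\mapsto G_{(a,\alpha)}$ from Lemma~\ref{lemma:operator-G-a}. Your primary approach is simply a more explicit unpacking of the same computation, essentially reproving the content of Lemmas~\ref{lemma:new-operator} and \ref{lemma:operator-G-a} by hand.
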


\begin{proofclaim}
We consider any $\hat{u}, \, \hat{v} \in H^{1,0}_0(\Omega,\C)$. By definition 
of $G_{(a,\alpha)}$ in \eqref{eq:operator-G-a} and the fact that $\gamma_a$, 
defined in \eqref{eq:gamma_a}, is an isomorphism in $L^2(\Omega,\C)$ we see that 
\begin{align*}
E_{(a,\alpha)} (\Hat{u},\Hat{v})
& = \int_{\Omega} (i\nabla + A_a^\alpha)(\gamma_a^{-1} (\hat{u})) \cdot 
\overline{(i\nabla + A_a^\alpha)(\gamma_a^{-1}(\hat{v}))} 
= \int_{\Omega} (i\nabla + A_a^\alpha)^2(\gamma_a^{-1}(\Hat u)) \, 
\overline{ (\gamma_a^{-1} (\Hat v)) } \\
& = \int_{\Omega} \gamma_a^{-1} \circ G_{(a,\alpha)} \Hat u \, 
\overline{ (\gamma_a^{-1}(\Hat v)) } 
= \int_{\Omega} G_{(a,\alpha)} \Hat u \,\overline{\Hat v}
\end{align*}
From Lemma \ref{lemma:operator-G-a}, the conclusion follows. 

Moreover we know from Lemma~\ref{lemma:operator-G-a} that $G_{(a,\alpha)}$ 
is $C^1$ in $D_R(0) \times (0,1)$. Therefore, for any $\alpha \in (0,1)$, 
there exists $G'(0,\alpha)$ such $G_{(a,\alpha)} = (i \nabla + A_0^{\alpha})^2 
+ G'(0,\alpha)[(a,0)] + o(|(a,0)|)$, for $|(a,0)| \to 0$. If $\alpha$ is 
sufficiently far from the integers $0$ and $1$, that is if $\alpha \in 
\mathcal{W}$, there exists $K > 0$ independent of $(a,\alpha) \in D_R(0) 
\times \mathcal{W}$ such that 
\begin{equation}\label{eq:rem5}
K \|\Hat u\|_{H^{1,0}_0(\Omega,\C)}^2 \leq E_{(a,\alpha)}(\Hat u,\Hat u) 
\quad \forall \Hat u\in H^{1,0}_0(\Omega,\C) .
\end{equation}
\end{proofclaim}

Therefore, we denote by $E'(a_0,\alpha_0)[(a,\omega)]$ the Fréchet derivative 
of $E_{(a,\alpha)}$ at $(a_0,\alpha_0) \in D_R(0) \times (0,1)$ applied to 
$(a,\omega)$, and by $R(a_0,\alpha_0)[(a,\omega)] = o(|(a,\omega)|)$ the 
remainder.

\smallskip
 
\begin{claim}
For any $(a,\alpha) \in D_R(0) \times \mathcal{W}$, we claim that 
\begin{equation}\label{eq:rem6}
\| B_{(a,\alpha)} \Hat u \|_{H^{1,0}_0(\Omega,\C)} \leq C \|\Hat u\|_{H^{1,0}_0(\Omega,\C)},
\end{equation}
for some constant $C > 0$ independent of $(a,\alpha)$.
\end{claim}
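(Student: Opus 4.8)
The plan is to deduce \eqref{eq:rem6} directly from the uniform coercivity estimate \eqref{eq:rem5} established in the previous claim, together with the defining relation \eqref{eq:inverse-operator} for $B_{(a,\alpha)}$ and the continuity of the \emph{fixed} embedding $H^{1,0}_0(\Omega,\C)\hookrightarrow L^2(\Omega,\C)$.

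First I would rewrite \eqref{eq:inverse-operator} in a form not involving $\gamma_a$ on the test side. Since $\gamma_a$ is an algebraic and topological isomorphism of $H^{1,a}_0(\Omega,\C)$ onto $H^{1,0}_0(\Omega,\C)$ which preserves the $L^2(\Omega,\C)$ scalar product, writing $\hat u=\gamma_a(u)$ and $\hat v=\gamma_a(v)$ with $u,v\in H^{1,a}_0(\Omega,\C)$ arbitrary, relation \eqref{eq:inverse-operator} becomes
\[
E_{(a,\alpha)}\big(B_{(a,\alpha)}\hat u,\hat v\big)=\int_{\Omega}\hat u\,\overline{\hat v}\qquad\text{for all }\hat u,\hat v\in H^{1,0}_0(\Omega,\C).
\]
Then, fixing $\hat u\in H^{1,0}_0(\Omega,\C)$ and choosing the admissible test function $\hat v=B_{(a,\alpha)}\hat u$, I would combine this identity with the lower bound \eqref{eq:rem5} and the Cauchy--Schwarz inequality in $L^2(\Omega,\C)$ to get
\[
K\,\|B_{(a,\alpha)}\hat u\|_{H^{1,0}_0(\Omega,\C)}^2\le E_{(a,\alpha)}\big(B_{(a,\alpha)}\hat u,B_{(a,\alpha)}\hat u\big)=\int_{\Omega}\hat u\,\overline{B_{(a,\alpha)}\hat u}\le\|\hat u\|_{L^2(\Omega,\C)}\,\|B_{(a,\alpha)}\hat u\|_{L^2(\Omega,\C)}.
\]
Using the continuity of $H^{1,0}_0(\Omega,\C)\hookrightarrow L^2(\Omega,\C)$, whose constant depends only on $\Omega$ since the space $H^{1,0}_0(\Omega,\C)$ is fixed and independent of $(a,\alpha)$, the right-hand side is bounded by a constant times $\|\hat u\|_{H^{1,0}_0(\Omega,\C)}\,\|B_{(a,\alpha)}\hat u\|_{H^{1,0}_0(\Omega,\C)}$; dividing through (the estimate being trivial when $B_{(a,\alpha)}\hat u=0$) yields \eqref{eq:rem6} with a constant of the form (embedding constant)$^2/K$.

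There is no substantial obstacle here; the only point requiring care is that this constant be independent of $(a,\alpha)\in D_R(0)\times\mathcal W$. This is exactly what the restriction to $\mathcal W\subset\subset(0,1)$ buys us: it is what makes \eqref{eq:rem5} hold with a single constant $K$, uniformly in $(a,\alpha)$, while the embedding constant causes no trouble since the domain of all the operators $B_{(a,\alpha)}$ has been arranged to be the fixed space $H^{1,0}_0(\Omega,\C)$.
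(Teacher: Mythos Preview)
Your proof is correct and follows essentially the same route as the paper: apply the uniform coercivity \eqref{eq:rem5} with $\hat v=B_{(a,\alpha)}\hat u$ in the defining relation \eqref{eq:inverse-operator}, then bound the resulting $L^2$ pairing by the product of $H^{1,0}_0$ norms and cancel. The paper's version is slightly terser (it passes from the $L^2$ scalar product to the product of $H^{1,0}_0$ norms in one line), but the argument is the same.
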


\begin{proofclaim}
By definition of $B_{(a,\alpha)}$, $E_{(a,\alpha)}$ and \eqref{eq:rem5}, 
for $a\in D_R(0) \in (0,1)$ we have
\begin{align*}
K \|B_{(a,\alpha)} \Hat u \|_{H^{1,0}_0(\Omega,\C)}^2 
\leq E_{(a,\alpha)} (B_{(a,\alpha)} \Hat u , B_{(a,\alpha)} \Hat u) 
& = (\Hat u , B_{(a,\alpha)} \Hat u)_{L^2(\Omega,\C)} \\
& \leq \|\Hat u\|_{H^{1,0}_0(\Omega,\C)} \, \|B_{(a,\alpha)} \Hat u \|_{H^{1,0}_0(\Omega,\C)}.
\end{align*}
The claim follows immediately from it.
\end{proofclaim}

\smallskip

\begin{claim}
Let $(a,\alpha) \in D_R(0) \times \mathcal{W}$. We claim that the map 
$(a,\alpha) \mapsto B_{(a,\alpha)}$ is $C^0 \big( D_R(0) \times \mathcal{W}, 
BL( H^{1,0}_0(\Omega,\C), H^{1,0}_0(\Omega,\C) \big)$.
\end{claim}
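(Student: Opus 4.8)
The plan is to deduce the continuity of $(a,\alpha)\mapsto B_{(a,\alpha)}$ from the continuity of $(a,\alpha)\mapsto E_{(a,\alpha)}$ established in the first Claim, combined with the uniform coercivity \eqref{eq:rem5} and the uniform bound \eqref{eq:rem6}, by the standard stability estimate for Lax--Milgram problems. First I would fix $(a_0,\alpha_0)\in D_R(0)\times\mathcal{W}$ and let $(a,\alpha)$ range over a neighbourhood contained in $D_R(0)\times\mathcal{W}$. For all $\hat u,\hat v\in H^{1,0}_0(\Omega,\C)$, writing $\hat u=\gamma_a(u)$ and $\hat v=\gamma_a(v)$ and using that $\gamma_a$ preserves the $L^2(\Omega,\C)$ scalar product, the characterisation \eqref{eq:inverse-operator} gives $E_{(a,\alpha)}(B_{(a,\alpha)}\hat u,\hat v)=\int_\Omega\hat u\,\overline{\hat v}=E_{(a_0,\alpha_0)}(B_{(a_0,\alpha_0)}\hat u,\hat v)$; subtracting and using the linearity of $E_{(a,\alpha)}$ in its first argument, one obtains
\[
E_{(a,\alpha)}\big(B_{(a,\alpha)}\hat u-B_{(a_0,\alpha_0)}\hat u,\ \hat v\big)=\big(E_{(a_0,\alpha_0)}-E_{(a,\alpha)}\big)\big(B_{(a_0,\alpha_0)}\hat u,\ \hat v\big).
\]

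Next I would test this identity with $\hat v=B_{(a,\alpha)}\hat u-B_{(a_0,\alpha_0)}\hat u$ and invoke the uniform coercivity \eqref{eq:rem5}, which holds with a constant $K>0$ independent of $(a,\alpha)\in D_R(0)\times\mathcal{W}$ precisely because $\mathcal{W}\subset\subset(0,1)$ keeps $\alpha$ away from the integers $0$ and $1$:
\[
K\,\big\|B_{(a,\alpha)}\hat u-B_{(a_0,\alpha_0)}\hat u\big\|_{H^{1,0}_0(\Omega,\C)}^2\le \big(E_{(a_0,\alpha_0)}-E_{(a,\alpha)}\big)\big(B_{(a_0,\alpha_0)}\hat u,\ B_{(a,\alpha)}\hat u-B_{(a_0,\alpha_0)}\hat u\big).
\]
Estimating the right-hand side by $\|E_{(a,\alpha)}-E_{(a_0,\alpha_0)}\|\,\|B_{(a_0,\alpha_0)}\hat u\|_{H^{1,0}_0(\Omega,\C)}\,\|B_{(a,\alpha)}\hat u-B_{(a_0,\alpha_0)}\hat u\|_{H^{1,0}_0(\Omega,\C)}$, where $\|\cdot\|$ denotes the norm in $BL(H^{1,0}_0(\Omega,\C)\times H^{1,0}_0(\Omega,\C),\C)$, simplifying one factor, and applying the uniform bound \eqref{eq:rem6} to $B_{(a_0,\alpha_0)}$, I would get
\[
\big\|B_{(a,\alpha)}\hat u-B_{(a_0,\alpha_0)}\hat u\big\|_{H^{1,0}_0(\Omega,\C)}\le \frac{C}{K}\,\big\|E_{(a,\alpha)}-E_{(a_0,\alpha_0)}\big\|\,\|\hat u\|_{H^{1,0}_0(\Omega,\C)},
\]
with $C$ independent of $(a,\alpha)$ and of $\hat u$.

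Taking the supremum over $\|\hat u\|_{H^{1,0}_0(\Omega,\C)}\le1$ then yields $\|B_{(a,\alpha)}-B_{(a_0,\alpha_0)}\|_{BL(H^{1,0}_0(\Omega,\C),H^{1,0}_0(\Omega,\C))}\le(C/K)\,\|E_{(a,\alpha)}-E_{(a_0,\alpha_0)}\|$, and the right-hand side tends to $0$ as $(a,\alpha)\to(a_0,\alpha_0)$ by the first Claim ($E$ is $C^1$, hence in particular $C^0$). This proves the asserted continuity. I do not anticipate a genuine obstacle: the argument is the usual stability estimate for Lax--Milgram solutions, and the only delicate point is to keep the constants $K$ and $C$ uniform in $(a,\alpha)$, which is exactly the reason for restricting to $D_R(0)\times\mathcal{W}$ with $\mathcal{W}\subset\subset(0,1)$ instead of working on all of $D_R(0)\times(0,1)$.
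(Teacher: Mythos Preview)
Your argument is correct and follows essentially the same approach as the paper's proof: derive an identity expressing the difference $B_{(a,\alpha)}-B_{(a_0,\alpha_0)}$ in terms of the difference of the forms $E$, test with $\hat v$ equal to that difference, and apply the uniform coercivity \eqref{eq:rem5} together with the uniform bound \eqref{eq:rem6}.

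There are two minor cosmetic differences worth noting. First, the paper applies coercivity of the \emph{fixed} form $E_{(a_0,\alpha_0)}$ and expands $E_{(a_0+a,\alpha_0+\omega)}$ to first order via its $C^1$ regularity, obtaining the identity \eqref{eq:lem5} with $E'(a_0,\alpha_0)[(a,\omega)]$ on the right; this identity is then reused in the subsequent claim establishing Fr\'echet differentiability. Your version applies coercivity of the \emph{moving} form $E_{(a,\alpha)}$ (which is fine thanks to the uniformity of $K$) and uses only the $C^0$ regularity of $E$, which is all that is needed for continuity but does not set up \eqref{eq:lem5}. Second, the paper's right-hand side involves $B_{(a_0+a,\alpha_0+\omega)}\hat u$ (bounded via \eqref{eq:rem6}), whereas yours involves $B_{(a_0,\alpha_0)}\hat u$; both work equally well. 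Neither difference affects correctness.
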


\begin{proofclaim} 
We follow \cite[Lemma 5]{LupoMicheletti1993}.
For any $\Hat u,\,\Hat v \in H^{1,0}_0(\Omega,\C)$ we have for 
$(a_0,\alpha_0) \in D_R(0) \times (0,1)$ and $(a_0 + a, \alpha_0 + \omega) 
\in D_R(0) \times \mathcal{W}$ and by \eqref{eq:inverse-operator}
\begin{align*}
(\Hat u,\Hat v)_{L^2(\Omega,\C)} 
& = E_{(a_0+a,\alpha_0 + \omega)} (B_{(a_0+a,\alpha_0+\omega)} \Hat u,\Hat v) \\
& = E_{(a_0,\alpha_0)} (B_{(a_0+a,\alpha_0+\omega)} \Hat u, \Hat v) 
+ E'(a_0,\alpha_0)[(a,\omega)] (B_{(a_0+a,\alpha_0+\omega)} \Hat u,\Hat v) 
+ o(|(a,\omega)|) \\
& = E_{(a_0,\alpha_0)} (B_{(a_0,\alpha_0)} \Hat u,\Hat v) 
+ E_{(a_0,\alpha_0)}((B_{(a_0+a,\alpha_0+\omega)} - B_{(a_0,\alpha_0)}) \Hat u,\Hat v) \\
& + E'(a_0,\alpha_0)[(a,\omega)](B_{(a_0+a,\alpha_0+\omega)}\Hat u,\Hat v) 
+ o(|(a,\omega)|).
\end{align*}
Since by definition $(\Hat u,\Hat v)_{L^2(\Omega,\C)} = 
E_{(a_0,\alpha_0)}(B_{(a_0,\alpha_0)}\Hat u,\Hat v)$ we obtain
\begin{equation}\label{eq:lem5}
E_{(a_0,\alpha_0)}((B_{(a_0+a,\alpha_0+\omega)} - B_{(a_0,\alpha_0)}) 
\Hat u,\Hat v) = - E'(a_0,\alpha_0)[(a,\omega)](B_{(a_0+a,\alpha_0+\omega)} 
\Hat u,\Hat v) + o(|(a,\omega)|).
\end{equation}
Considering $\Hat v = (B_{(a_0+a,\alpha_0+\omega)} - B_{(a_0,\alpha_0)})\Hat u$ 
and using \eqref{eq:rem5} and \eqref{eq:rem6}, the latter relation reads
\[
\|(B_{(a_0+a,\alpha_0+\omega)} - B_{(a_0,\alpha_0)}) 
\Hat u\|_{{H^{1,0}_0(\Omega,\C)}} \leq c(a_0,\alpha_0) |(a,\omega)| 
\|\Hat u\|_{{H^{1,0}_0(\Omega,\C)}},
\]
for some $c(a_0,\alpha_0) > 0$ depending only on $(a_0,\alpha_0)$.
\end{proofclaim}

\smallskip

\begin{claim}
For any $(a_0,\alpha_0) \in D_R(0) \times \mathcal{W}$, the map $(a,\alpha) 
\mapsto B_{(a,\alpha)}$ is Fréchet differentiable at $(a_0,\alpha_0)$. 
Moreover, if we write $B'(a_0,\alpha_0)[(a,\omega)]$ the Fréchet derivative 
of $B_{(a,\alpha)}$ at $(a_0,\alpha_0)$ applied to $(a,\omega)$, it 
holds for any $\Hat{u}, \Hat{v} \in H^{1,0}_0(\Omega,\C)$
\[
E_{(a_0,\alpha_0)} (B'(a_0,\alpha_0)[(a,\omega)] \Hat{u}, \Hat{v} ) 
= - E'(a_0,\alpha_0)[(a,\omega)](B_{(a_0,\alpha_0)} \Hat{u},\Hat{v}).
\]
\end{claim}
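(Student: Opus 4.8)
The plan is to produce the candidate derivative by a Lax--Milgram/Riesz argument in the Hilbert space $H^{1,0}_0(\Omega,\C)$ equipped with the (equivalent) scalar product $E_{(a_0,\alpha_0)}$, and then to verify the Fréchet estimate using the continuity of $(a,\alpha)\mapsto B_{(a,\alpha)}$ proved in the previous claim together with the uniform bounds \eqref{eq:rem5}--\eqref{eq:rem6}. Concretely, I would first note that, since $E$ is $C^1$ (Claim~1 of this section), the map $(a,\omega)\mapsto E'(a_0,\alpha_0)[(a,\omega)]$ is a bounded linear map from $\R^2\times\R$ into $BL\big(H^{1,0}_0(\Omega,\C)\times H^{1,0}_0(\Omega,\C),\C\big)$, and each $E'(a_0,\alpha_0)[(a,\omega)]$ is a bounded sesquilinear form inheriting the linear/antilinear structure of $E_{(a,\alpha)}$. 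Hence, for fixed $(a,\omega)$ and $\hat u\in H^{1,0}_0(\Omega,\C)$, the functional $\hat v\mapsto -E'(a_0,\alpha_0)[(a,\omega)]\big(B_{(a_0,\alpha_0)}\hat u,\hat v\big)$ is antilinear and continuous, so by coercivity \eqref{eq:rem5} and the Lax--Milgram theorem there is a unique $B'(a_0,\alpha_0)[(a,\omega)]\hat u\in H^{1,0}_0(\Omega,\C)$ with
\[
E_{(a_0,\alpha_0)}\big(B'(a_0,\alpha_0)[(a,\omega)]\hat u,\hat v\big)=-E'(a_0,\alpha_0)[(a,\omega)]\big(B_{(a_0,\alpha_0)}\hat u,\hat v\big)\qquad\forall\,\hat u,\hat v\in H^{1,0}_0(\Omega,\C).
\]
Linearity in $(a,\omega,\hat u)$ is inherited from the right-hand side, and combining \eqref{eq:rem5}, \eqref{eq:rem6} with the operator norm of $E'(a_0,\alpha_0)$ gives $\|B'(a_0,\alpha_0)[(a,\omega)]\hat u\|_{H^{1,0}_0(\Omega,\C)}\le C\,|(a,\omega)|\,\|\hat u\|_{H^{1,0}_0(\Omega,\C)}$; thus $(a,\omega)\mapsto B'(a_0,\alpha_0)[(a,\omega)]$ is a bounded linear map into $BL\big(H^{1,0}_0(\Omega,\C),H^{1,0}_0(\Omega,\C)\big)$, which is the expected shape of a Fréchet derivative.

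Next I would subtract this defining identity from \eqref{eq:lem5}. Writing $\Delta:=B_{(a_0+a,\alpha_0+\omega)}-B_{(a_0,\alpha_0)}$, for arbitrary $\hat u,\hat v\in H^{1,0}_0(\Omega,\C)$ one gets
\[
E_{(a_0,\alpha_0)}\big((\Delta-B'(a_0,\alpha_0)[(a,\omega)])\hat u,\hat v\big)=-E'(a_0,\alpha_0)[(a,\omega)](\Delta\hat u,\hat v)+o(|(a,\omega)|)\,\|\hat u\|\,\|\hat v\|,
\]
where the remainder is uniform in $\hat u,\hat v$ because, by its origin in \eqref{eq:lem5}, it equals $R(a_0,\alpha_0)[(a,\omega)]\big(B_{(a_0+a,\alpha_0+\omega)}\hat u,\hat v\big)$ and $\|B_{(a_0+a,\alpha_0+\omega)}\hat u\|\le C\|\hat u\|$ by \eqref{eq:rem6}. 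For the first term on the right I would invoke the continuity estimate of the previous claim, namely $\|\Delta\hat u\|\le c(a_0,\alpha_0)\,|(a,\omega)|\,\|\hat u\|$, together with the boundedness of $E'(a_0,\alpha_0)[(a,\omega)]$ (which is $O(|(a,\omega)|)$ as a sesquilinear form), to see that this term is also $o(|(a,\omega)|)\,\|\hat u\|\,\|\hat v\|$. Hence the whole right-hand side is $o(|(a,\omega)|)\,\|\hat u\|\,\|\hat v\|$.

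Finally, I would specialise $\hat v=(\Delta-B'(a_0,\alpha_0)[(a,\omega)])\hat u$ and use coercivity \eqref{eq:rem5}, obtaining
\[
K\,\|(\Delta-B'(a_0,\alpha_0)[(a,\omega)])\hat u\|^2\le E_{(a_0,\alpha_0)}\big((\Delta-B'(a_0,\alpha_0)[(a,\omega)])\hat u,\hat v\big)=o(|(a,\omega)|)\,\|\hat u\|\,\|(\Delta-B'(a_0,\alpha_0)[(a,\omega)])\hat u\|,
\]
and dividing yields $\|\Delta-B'(a_0,\alpha_0)[(a,\omega)]\|_{BL}=o(|(a,\omega)|)$ as $|(a,\omega)|\to 0$, i.e.\ $B_{(a,\alpha)}$ is Fréchet differentiable at $(a_0,\alpha_0)$ with derivative $B'(a_0,\alpha_0)$ satisfying the stated relation. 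The only point requiring genuine care — and the step I expect to be the main obstacle — is making sure all the $o(|(a,\omega)|)$ terms are \emph{uniform} with respect to $\hat u,\hat v$ in the unit ball, since that is precisely what allows the passage from the weak identity to an operator-norm estimate; this uniformity is secured exactly by combining the uniform bound \eqref{eq:rem6} with the Lipschitz-type continuity of $(a,\alpha)\mapsto B_{(a,\alpha)}$ from the previous claim.
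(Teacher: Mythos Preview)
Your proposal is correct and follows essentially the same approach as the paper: define the candidate derivative via Riesz/Lax--Milgram from the identity $E_{(a_0,\alpha_0)}(L\hat u,\hat v)=-E'(a_0,\alpha_0)[(a,\omega)](B_{(a_0,\alpha_0)}\hat u,\hat v)$, subtract this from \eqref{eq:lem5} to obtain $E_{(a_0,\alpha_0)}\big((\Delta-L)\hat u,\hat v\big)=-E'(a_0,\alpha_0)[(a,\omega)](\Delta\hat u,\hat v)+o(|(a,\omega)|)$, then test with $\hat v=(\Delta-L)\hat u$ and use \eqref{eq:rem5} together with the Lipschitz estimate of Claim~3. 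Your explicit tracking of the uniformity in $\hat u,\hat v$ of the remainder (via \eqref{eq:rem6}) is exactly the point the paper leaves implicit in the phrase ``from which the thesis follows''.
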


\begin{proofclaim}
We follow the proof of \cite[Lemma 6]{LupoMicheletti1993}. Let us consider 
$(a_0, \alpha_0), \, (a_0 + a, \alpha_0 + \omega) \in D_R(0) \times \mathcal{W}$, 
and $\Hat u \in H^{1,0}_0(\Omega,\C)$. For any $\Hat v \in H^{1,0}_0(\Omega,\C)$, 
we consider the map $\Hat v \mapsto E'(a_0,\alpha_0)[(a,\omega)](B_{(a_0,\alpha_0)} 
\Hat u,\Hat v) \in \C$. By the properties of $E_{(a,\alpha)}$ and Riesz's 
Theorem, it is defined a sesquilinear and continuous map $L_{(a_0,\alpha_0)} 
\, : \, \R^2 \times \R \times H^{1,0}_0(\Omega,\C) \to 
H^{1,0}_0(\Omega,\C)$ such that
\begin{equation}\label{eq:La}
- E'(a_0,\alpha_0)[(a,\omega)](B_{(a_0,\alpha_0)} \Hat u,\Hat v) = 
E_{(a_0,\alpha_0)}(L_{(a_0,\alpha_0)}(a,\omega,\Hat u),\Hat v). 
\end{equation}
We are now proving that for every fixed $(a_0,\alpha_0) \in D_R(0) \times 
\mathcal{W}$ and fixed a normalized $\Hat u \in H^{1,0}_0(\Omega,\C)$ we have
\[
\lim_{|(a,\omega)| \to 0} \frac{\| (B_{(a_0+a,\alpha_0+\omega)} 
- B_{(a_0,\alpha_0)}) \Hat u - L_{(a_0,\alpha_0)}(a,\omega,\Hat u) 
\|_{H^{1,0}_0(\Omega,\C)}}{|(a,\omega)|} = 0
\]
uniformly with respect to $\Hat u$. Indeed, denoting $\Hat w:= 
(B_{(a_0+a,\alpha_0+\omega)} - B_{(a_0,\alpha_0)}) \Hat u 
- L_{(a_0,\alpha_0)}(a,\omega,\Hat u)$, by \eqref{eq:rem5}, \eqref{eq:lem5} 
and \eqref{eq:La} we have
\begin{align*}
& K \| ( B_{(a_0+a,\alpha_0+\omega)} - B_{(a_0,\alpha_0)}) \Hat u 
- L_{(a_0,\alpha_0)} ( a,\omega,\Hat u ) \|_{ H^{1,0}(\Omega,\C)}^2 \\
& \leq E_{(a_0,\alpha_0)}( ( B_{(a_0+a,\alpha_0+\omega)} - B_{(a_0,\alpha_0)}) \Hat u 
- L_{(a_0,\alpha_0)}(a,\omega,\Hat u) , \Hat w) \\
& = - E'(a_0,\alpha_0)[(a,\omega)]( B_{(a_0+a,\alpha_0+\omega)} \Hat u, \Hat w) 
+ E'(a_0,\alpha_0)[(a,\omega)]( B_{(a_0,\alpha_0)} \Hat u,\Hat w) + o(|(a,\omega)|) \\
& = - E'(a_0,\alpha_0)[(a,\omega)]( ( B_{(a_0+a,\alpha_0+\omega)} - B_{(a_0,\alpha_0)} ) 
\Hat u,\Hat w) + o(|(a,\omega)|),
\end{align*}
from which the thesis follows. We then have that
\[
L_{(a_0,\alpha_0)}(a,\omega, \Hat{u}) = B'(a_0,\alpha_0)[(a,\omega)] \Hat{u}.
\]
\end{proofclaim}

\smallskip

\begin{claim}
We claim that the map $(a,\alpha) \mapsto B_{(a,\alpha)}$ is 
$C^1 \big( D_R(0)\times \mathcal{W} ,  BL( H^{1,0}_0(\Omega,\C), H^{1,0}_0(\Omega,\C) ) \big)$.
\end{claim}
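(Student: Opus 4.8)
The plan is to deduce the $C^1$-regularity of $(a,\alpha)\mapsto B_{(a,\alpha)}$ from the Fréchet differentiability established in the previous claim, by showing that the derivative map $(a_0,\alpha_0)\mapsto B'(a_0,\alpha_0)$ is continuous from $D_R(0)\times\mathcal W$ into $BL\big(\R^2\times\R, BL(H^{1,0}_0(\Omega,\C),H^{1,0}_0(\Omega,\C))\big)$. Everything rests on the characterizing identity of the previous claim,
\[
E_{(a_0,\alpha_0)}\big(B'(a_0,\alpha_0)[(a,\omega)]\hat u,\hat v\big)=-E'(a_0,\alpha_0)[(a,\omega)]\big(B_{(a_0,\alpha_0)}\hat u,\hat v\big),
\]
valid for all $\hat u,\hat v\in H^{1,0}_0(\Omega,\C)$ and all $(a,\omega)\in\R^2\times\R$.

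First I would record a uniform a priori bound on $B'$. Testing this identity with $\hat v=B'(a_0,\alpha_0)[(a,\omega)]\hat u$, using the uniform coercivity \eqref{eq:rem5} on the left, the (uniformly bounded, by the first claim) operator norm of $E'(a_0,\alpha_0)$ and the bound \eqref{eq:rem6} on the right, and dividing out one factor, one obtains a constant $C$ independent of $(a_0,\alpha_0)\in D_R(0)\times\mathcal W$ such that
\[
\|B'(a_0,\alpha_0)[(a,\omega)]\hat u\|_{H^{1,0}_0(\Omega,\C)}\le C\,|(a,\omega)|\,\|\hat u\|_{H^{1,0}_0(\Omega,\C)}.
\]

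Next, for two base points $(a_0,\alpha_0),(a_1,\alpha_1)\in D_R(0)\times\mathcal W$, I would expand
\[
E_{(a_0,\alpha_0)}\big((B'(a_1,\alpha_1)[(a,\omega)]-B'(a_0,\alpha_0)[(a,\omega)])\hat u,\hat v\big)
\]
by adding and subtracting $E_{(a_1,\alpha_1)}\big(B'(a_1,\alpha_1)[(a,\omega)]\hat u,\hat v\big)$ and invoking the characterizing identity at both base points. This produces three error terms: one of the form $(E_{(a_1,\alpha_1)}-E_{(a_0,\alpha_0)})\big(B'(a_1,\alpha_1)[(a,\omega)]\hat u,\hat v\big)$, controlled by the continuity of $(a,\alpha)\mapsto E_{(a,\alpha)}$ from the first claim together with the a priori bound just obtained; one of the form $\big(E'(a_1,\alpha_1)-E'(a_0,\alpha_0)\big)[(a,\omega)]\big(B_{(a_1,\alpha_1)}\hat u,\hat v\big)$, controlled by the continuity of $E'$ (again the first claim) and \eqref{eq:rem6}; and one of the form $E'(a_0,\alpha_0)[(a,\omega)]\big((B_{(a_1,\alpha_1)}-B_{(a_0,\alpha_0)})\hat u,\hat v\big)$, controlled by the $C^0$-continuity of $(a,\alpha)\mapsto B_{(a,\alpha)}$ from the third claim. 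Collecting, the displayed quantity is bounded by $\varepsilon(|(a_1,\alpha_1)-(a_0,\alpha_0)|)\,|(a,\omega)|\,\|\hat u\|_{H^{1,0}_0(\Omega,\C)}\,\|\hat v\|_{H^{1,0}_0(\Omega,\C)}$ with $\varepsilon(\cdot)\to 0$ at $0$.

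Finally I would set $\hat v=(B'(a_1,\alpha_1)[(a,\omega)]-B'(a_0,\alpha_0)[(a,\omega)])\hat u$ and use the uniform coercivity \eqref{eq:rem5} of $E_{(a_0,\alpha_0)}$ one more time, obtaining
\[
\|(B'(a_1,\alpha_1)[(a,\omega)]-B'(a_0,\alpha_0)[(a,\omega)])\hat u\|_{H^{1,0}_0(\Omega,\C)}\le C\,\varepsilon(|(a_1,\alpha_1)-(a_0,\alpha_0)|)\,|(a,\omega)|\,\|\hat u\|_{H^{1,0}_0(\Omega,\C)},
\]
so that $\|B'(a_1,\alpha_1)-B'(a_0,\alpha_0)\|\to 0$ as $(a_1,\alpha_1)\to(a_0,\alpha_0)$, which is exactly the continuity of the derivative; hence $(a,\alpha)\mapsto B_{(a,\alpha)}$ is $C^1$ on $D_R(0)\times\mathcal W$, as in the final step of \cite{LupoMicheletti1993}. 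The step demanding care — and the main bookkeeping obstacle — is the comparison of the two Hermitian forms $E_{(a_0,\alpha_0)}$ and $E_{(a_1,\alpha_1)}$ evaluated on the vector $B'(a_1,\alpha_1)[(a,\omega)]\hat u$, which is precisely where the a priori bound above is indispensable; once that bound is available, the remainder is only the triangle inequality combined with the continuity statements for $E$, $E'$ and $B$ already established in the earlier claims.
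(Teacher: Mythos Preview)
Your argument is correct and is essentially the same as the paper's: both use the characterizing identity $E_{(\cdot)}(B'(\cdot)\hat u,\hat v)=-E'(\cdot)(B_{(\cdot)}\hat u,\hat v)$ at two nearby base points, then split the difference into three error terms controlled respectively by the continuity of $E$, of $E'$, and of $B$, before testing with $\hat v$ equal to the difference and invoking the uniform coercivity \eqref{eq:rem5}. The only cosmetic differences are that the paper evaluates the coercive form at the perturbed point rather than the fixed one and writes the increments via Taylor expansion (producing terms like $E'(a_0,\alpha_0)[(a_1,\omega_1)](B'(a_0,\alpha_0)[(a,\omega)]\hat u,\hat w)$), whereas you invoke continuity of $E$, $E'$ and $B$ directly; your explicit a priori bound on $B'$ is exactly what makes the first of your three error terms tractable, and the paper uses the same bound implicitly.
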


\begin{proofclaim}
Here we follow \cite[Lemma 7]{LupoMicheletti1993}. Let us consider $(a_0,\alpha_0), 
\, (a_0 + a_1, \alpha_0 + \alpha_1) \in D_R(0) \times \mathcal{W}$
and denote $\Hat w:= ( B'(a_0 + a_1,\alpha_0 + \omega_1) - B'(a_0,\alpha_0))[(a,\omega)] 
\Hat u$ for a fixed $\Hat u \in H^{1,0}_0(\Omega,\C)$.
As before, by \eqref{eq:rem5}, \eqref{eq:La} we estimate
\begin{align*}
& K \| ( B'(a_0 + a_1,\alpha_0 + \omega_1) - B'(a_0,\alpha_0) ) [(a,\omega)] 
\Hat u \|_{H^{1,0}_0(\Omega,\C)}^2 \\
& \leq E_{(a_0+a_1,\alpha_0+\omega_1)}( ( B'(a_0 + a_1,\alpha_0+\omega_1)) 
- B'(a_0,\alpha_0) [(a,\omega)] \Hat u , \Hat w ) \\
& = E_{(a_0+a_1,\alpha_0+\omega_1)} (B'(a_0+a_1,\alpha_0+\omega_1)[(a,\omega)] 
\Hat u,\Hat w) - E_{(a_0+a_1,\alpha_0+\omega_1)} (B'(a_0,\alpha_0)[(a,\omega)] 
\Hat u,\Hat w) \\
& = - E'(a_0+a_1,\alpha_0+\omega_1)[(a,\omega)]( B_{(a_0+a_1,\alpha_0+\omega_1)} 
\Hat u, \Hat w) - E_{(a_0+a_1,\alpha_0+\omega_1)} (B'(a_0,\alpha_0)[(a,\omega)] 
\Hat u,\Hat w) \\
& = - E'(a_0+a_1,\alpha_0+\omega_1)[(a,\omega)] (B_{(a_0+a_1,\alpha_0+\omega_1} 
\Hat u,\Hat w) 
- E'(a_0,\alpha_0)[(a_1,\omega_1)] (B'(a_0,\alpha_0)[(a,\omega)] 
\Hat u,\Hat w)   \\
& + E'(a_0,\alpha_0)[(a,\omega)] (B_{(a_0,\alpha_0)} \Hat u,\Hat w) + o(|(a_1,\omega_1)|).
\end{align*}
By Claim 4 it holds 
\[
 B_{(a_0+a_1,\alpha_0+\omega_1)} \Hat u 
 = B_{(a_0,\alpha_0)} \Hat u + B'(a_0,\alpha_0)[(a_1,\omega_1)] \Hat u 
 + o(|(a_1,\omega_1)|), 
\]
so that we can proceed with
\begin{align*} 
& K \| ( B'(a_0+a_1,\alpha_0+\omega_1) - B'(a_0,\alpha_0) ) [(a,\omega)] 
\Hat u\|_{H^{1,0}_0(\Omega,\C)}^2 \\
& \leq - ( E'(a_0+a_1,\alpha_0+\omega_1)- E'(a_0,\alpha_0) )[(a,\omega)] 
(B_{(a_0,\alpha_0)} \Hat u, \Hat w) \\
& - E'(a_0 + a_1,\alpha_0+\omega_1) [(a,\omega)] ( B'(a_0,\alpha_0) [(a_1,\omega_1)] 
\Hat u, \Hat w) \\
& - E'(a_0,\alpha_0)[(a_1,\omega_1)] (B'(a_0,\alpha_0)[(a,\omega)] 
\Hat u, \Hat w) + o(|(a_1,\omega_1)|),
\end{align*}
from which the thesis follows.
\end{proofclaim}
The proof of Claim 5 concludes the proof of the whole lemma.

\resetclaim

\section{Proof of Claim~\ref{claim:general-expression-L-prim} in Lemma~\ref{lemma:first-order-La}} \label{app:Lprim}

Fix $\varepsilon := \alpha - \tfrac{1}{2}$. As a first step, we note that 
with simple calculations we can prove that
\begin{equation} \label{eq:Aa-A0}
A_a^{\alpha} \circ \Phi_a - A_0^\alpha 
= (\xi - 1) \nabla (a \cdot A_0^{\sfrac{1}{2}}) 
+ o(|(a,\varepsilon)|),
\end{equation}
as $|(a,\varepsilon)| \to 0$, in $L^\infty(\Omega)$.

\smallskip

\begin{claim} \label{claim:intermediate}
As an intermediate step, we prove that
\begin{align*}
\phantom{a}_{(H^{1,0}_0(\Omega,\C))^\star} & \left\langle 
\mathcal{L}'(0,\tfrac{1}{2})[(a,\varepsilon)] \varphi_j 
, \varphi_k \right\rangle_{H^{1,0}_0(\Omega,\C)} \\
& = \int_{\Omega} - \nabla \varphi_j \cdot \nabla (a \cdot \nabla \xi) 
\, \overline{\varphi_k} 
- ( \nabla \varphi_j \cdot \nabla \xi ) 
\, ( a \cdot \nabla \overline{\varphi_k} ) 
- (a \cdot \nabla \varphi_j ) 
\, ( \nabla \xi \cdot \nabla \overline{\varphi_k} ) \\
& + \int_{\Omega} i A_0^{\sfrac{1}{2}} \cdot 
\nabla ( a \cdot \nabla \xi) \, \varphi_j \, \overline{\varphi_k} 
+ i ( A_0^{\sfrac{1}{2}} \cdot \nabla \xi ) 
\, ( a \cdot \nabla \overline{\varphi_k} ) \, \varphi_j
- i ( A_0^{\sfrac{1}{2}} \cdot \nabla \xi ) 
\, (a \cdot \nabla \varphi_j) \, \overline{\varphi_k} \\
& + \int_{\Omega} i (\xi -1 ) \nabla \varphi_j 
\cdot \nabla (a \cdot A_0^{\sfrac{1}{2}}) \, \overline{\varphi_k} 
- i (\xi - 1) \nabla \overline{\varphi_k} 
\cdot \nabla (a \cdot A_0^{\sfrac{1}{2}} ) \, \varphi_j  \\
& + \int_{\Omega} 2 (\xi-1) A_0^{\sfrac{1}{2}} 
\cdot \nabla (a \cdot A_0^{\sfrac{1}{2}} ) \, \varphi_j \, \overline{\varphi_k} . 
\end{align*}
\end{claim}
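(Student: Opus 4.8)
The plan is to obtain the formula by differentiating at $(0,\tfrac12)$ the explicit expression for $\mathcal{L}_{(a,\alpha)}$ recorded in Lemma~\ref{lemma:new-operator}. First I would remark that the direction $\varepsilon$ plays no role: since $\Phi_0$ is the identity, $J_0\equiv 1$ and $F(0,\alpha)\equiv 0$, the formula in Lemma~\ref{lemma:new-operator} gives $\mathcal{L}_{(0,\alpha)}=0$ for \emph{every} $\alpha\in(0,1)$, so that $\partial_\alpha\mathcal{L}_{(a,\alpha)}\big|_{(0,1/2)}=0$ and hence $\mathcal{L}'(0,\tfrac12)[(a,\varepsilon)]=\tfrac{d}{ds}\big|_{s=0}\mathcal{L}_{(sa,\,1/2)}$. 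In particular all occurrences of $A_0^\alpha$ may be frozen at $A_0^{\sfrac{1}{2}}$ from the outset, which is consistent with the shape of the claimed right-hand side.

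Next I would isolate the first-order part in $a$ of each ingredient entering Lemma~\ref{lemma:new-operator}. Because $J_a=1+a\cdot\nabla\xi$ is affine in $a$, one has $J_a-1=a\cdot\nabla\xi$, $\nabla J_a=\nabla(a\cdot\nabla\xi)$, $J_a^{-1}=1-a\cdot\nabla\xi+O(|a|^2)$, whereas $a\cdot\nabla J_a=a\cdot\nabla(a\cdot\nabla\xi)=O(|a|^2)$; consequently the summand $iJ_a^{-2}w(a\cdot\nabla J_a)\nabla\xi$ is quadratic and disappears from the linearization, while $-iJ_a^{-1}w\nabla J_a=-iw\,\nabla(a\cdot\nabla\xi)+o(|a|)$. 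For the magnetic term I would invoke \eqref{eq:Aa-A0}, which at $\alpha=\tfrac12$ reads $A_a^{\sfrac{1}{2}}\circ\Phi_a-A_0^{\sfrac{1}{2}}=(\xi-1)\nabla(a\cdot A_0^{\sfrac{1}{2}})+o(|a|)$, whence $F(a,\tfrac12)v=(\xi-1)\big(\nabla(a\cdot A_0^{\sfrac{1}{2}})\big)v-i(a\cdot\nabla v)\nabla\xi+o(|a|)$.

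With these expansions at hand I would linearize the Hermitian form. Writing $T_0:=i\nabla+A_0^{\sfrac{1}{2}}$, Lemma~\ref{lemma:new-operator} gives $\big\langle[T_0^2+\mathcal{L}_{(a,1/2)}]v,w\big\rangle=\int_\Omega(T_0v+Fv)\cdot\overline{T_0w+Fw-iJ_a^{-1}w\nabla J_a+iJ_a^{-2}w(a\cdot\nabla J_a)\nabla\xi}$; subtracting $\int_\Omega T_0v\cdot\overline{T_0w}$ and keeping only the terms linear in $a$ leaves
\[
\big\langle\mathcal{L}'(0,\tfrac12)[(a,\varepsilon)]v,w\big\rangle=\int_\Omega Fv\cdot\overline{T_0w}+\int_\Omega T_0v\cdot\overline{Fw}-\int_\Omega T_0v\cdot\overline{iw\,\nabla(a\cdot\nabla\xi)},
\]
where now $Fv=(\xi-1)\big(\nabla(a\cdot A_0^{\sfrac{1}{2}})\big)v-i(a\cdot\nabla v)\nabla\xi$. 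Taking $v=\varphi_j$, $w=\varphi_k$, expanding $T_0\varphi=i\nabla\varphi+A_0^{\sfrac{1}{2}}\varphi$, distributing the complex conjugations (using $\overline{i}=-i$ and the fact that $a$, $\xi$ and $A_0^{\sfrac{1}{2}}$ are real-valued) and regrouping the resulting scalar products should reproduce verbatim the four groups of integrals in the statement.

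I expect the only genuine work — and the main obstacle — to be this last regrouping: there are a dozen or so cross terms, each carrying a sign coming from the conjugation, and one must also verify that the $o(|a|)$ remainders inherited from \eqref{eq:Aa-A0} and from the expansion of $J_a^{-1}$ truly vanish in the limit $s\to0$. The latter is legitimate because the factors $\xi-1$ and $\nabla\xi$ confine every integrand to a region away from the pole, where $\varphi_j$ and $\varphi_k$ are smooth by \eqref{eq:regul}.
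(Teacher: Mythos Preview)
Your plan is correct and is essentially the same approach as the paper's: linearize the explicit bilinear form of Lemma~\ref{lemma:new-operator} in $(a,\alpha)$ at $(0,\tfrac12)$ using \eqref{eq:Aa-A0} and the affine structure of $J_a$, keep only the three first-order contributions $\int_\Omega Fv\cdot\overline{T_0w}$, $\int_\Omega T_0v\cdot\overline{Fw}$ and $\int_\Omega T_0v\cdot\overline{(-iw\,\nabla J_a)}$, and expand. The only difference is cosmetic: you dispose of the $\varepsilon$-direction at the outset via the observation $\mathcal{L}_{(0,\alpha)}\equiv 0$, whereas the paper carries the splitting $(i\nabla+A_0^\alpha)=(i\nabla+A_0^{\sfrac12})+2\varepsilon A_0^{\sfrac12}$ through each term and checks that every $\varepsilon$-contribution pairs with an $O(|a|)$ factor and is therefore $o(|(a,\varepsilon)|)$; your shortcut is cleaner but the content is the same.
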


\begin{proofclaim}
We look at every possible combinations of the terms appearing in 
Lemma~\ref{lemma:new-operator}, except for the first term
\[
\int_{\Omega} (i \nabla + A_0^\alpha) \varphi_j \cdot 
\overline{(i \nabla + A_0^\alpha) \varphi_k},
\]
which is not part of $\mathcal{L}_{(a,\alpha)}$ but represents the 
operator $(i \nabla + A_0^\alpha)^2$. The first term to consider is
\begin{equation*}
\int_{\Omega} (i \nabla + A_0^\alpha) v \cdot \overline{F(a,\alpha) w} 
= \int_{\Omega} (i \nabla + A_0^\alpha) v \cdot 
\overline{ \left[ (A_a^\alpha \circ \Phi_a - A_0) w - i J_a^{-1} (a \cdot \nabla w) \nabla \xi 
\right]}
\end{equation*}
The left part writes as
\[
(i \nabla + A_0^{\sfrac{1}{2}}) v + 2 \varepsilon A_0^{\sfrac{1}{2}} v,
\]
while the right parts is
\[
\overline{ \left[ (\xi - 1) \nabla (a \cdot A_0^{\sfrac{1}{2}} ) w 
- i (a \cdot \nabla w) \nabla \xi - i (1 - J_a) J_a^{-1} (a \cdot \nabla w) \nabla \xi 
+ o(|(a,\varepsilon)|) \right] },
\]
where the last $o(|(a,\varepsilon)|)$ is in $L^\infty(\Omega)$ as in \eqref{eq:Aa-A0}. 
From this we get the first order terms
\begin{equation} \label{eq:second}
\int_{\Omega} (i \nabla + A_0^{\sfrac{1}{2}}) v \cdot \nabla (a \cdot A_0^{\sfrac{1}{2}} ) (\xi - 1) \overline{w} 
+ i (i \nabla + A_0^{\sfrac{1}{2}}) v \cdot \nabla \xi \, (a \cdot \nabla \overline{w}),
\end{equation}
and the remainder terms are all bounded by $o(|(a,\varepsilon)|) \|w\|_{H^{1,0}_0(\Omega,\C)}$ 
as $|(a,\varepsilon)| \to 0$ because of \eqref{eq:xi} and \eqref{eq:det}.

\smallskip

The third term in Lemma~\ref{lemma:new-operator} to look at is
\begin{equation*}
\begin{split}
& \int_{\Omega} i (i \nabla + A_0^\alpha) v \cdot \nabla J_a \, J_a^{-1} \overline{w} \\
& \qquad \qquad \qquad = \int_{\Omega} i \left[ (i \nabla + A_0^{\sfrac{1}{2}}) v 
+ 2 \varepsilon A_0^{\sfrac{1}{2}} v \right] 
\cdot \left[ \nabla ( a \cdot \nabla \xi ) \overline{w} 
+ \nabla (a \cdot \nabla \xi ) (1 - J_a) J_a^{-1} \overline{w} \right].
\end{split}
\end{equation*}
The first order term is
\begin{equation} \label{eq:trois}
\int_{\Omega} i (i \nabla + A_0^{\sfrac{1}{2}} ) v \cdot 
\nabla (a \cdot \nabla \xi) \overline{w},
\end{equation}
and the rest is bounded by $o(|(a,\varepsilon)|) \|w\|_{H^{1,0}_0(\Omega,\C)}$ as before.

\smallskip

The fourth term in Lemma~\ref{lemma:new-operator} is
\[
\int_{\Omega} F(a,\alpha) v \cdot 
\overline{(i \nabla + A_0^\alpha) w} .
\]
Exactly as for the first term, the left part gives
\[
(\xi - 1) \nabla (a \cdot A_0^{\sfrac{1}{2}} )v - i J_a^{-1} (a \cdot \nabla v) \nabla \xi + o(|(a,\varepsilon)|),
\] 
being the $o(|(a,\varepsilon)|)$ in $L^\infty$, while 
the right part is
\[
\overline{ \left[ (i \nabla + A_0^{\sfrac{1}{2}}) w + 2 \varepsilon A_0^{\sfrac{1}{2}} w \right]}.
\]
The first order term is then
\begin{equation} \label{eq:quatre}
\int_{\Omega} (\xi - 1)  \nabla (a \cdot A_0^{\sfrac{1}{2}} ) v\cdot 
\overline{(i \nabla + A_0^{\sfrac{1}{2}}) w} 
- i (a \cdot \nabla v) \nabla \xi \cdot 
\overline{(i \nabla + A_0^{\sfrac{1}{2}}) w},
\end{equation}
and the rest is still bounded by the same quantity. 
Finally, all the remaining terms are also negligeable with respect to $|(a,\eps)|\to0$
using again \eqref{eq:xi} and \eqref{eq:det}. 
A combination of \eqref{eq:second}--\eqref{eq:quatre} 
gives Claim~\ref{claim:intermediate}.
\end{proofclaim}

\smallskip

\begin{claim} \label{claim:intermediate2}
We now prove that
\begin{align*}
\phantom{a}_{(H^{1,0}_0(\Omega,\C))^\star} & \left\langle 
\mathcal{L}'(0,\tfrac{1}{2})[(a,\varepsilon)] \varphi_j 
, \varphi_k \right\rangle_{H^{1,0}_0(\Omega,\C)} \\
& = \int_{\Omega} \Delta \xi \, (a \cdot \nabla \varphi_j) \, 
\overline{\varphi_k} 
+ 2 \nabla \xi \cdot \nabla ( a \cdot \nabla \varphi_j) \, 
\overline{\varphi_k} \\
& + \int_{\Omega} - 2i \, ( A_0^{\sfrac{1}{2}} \cdot \nabla \xi ) 
(a \cdot \nabla \varphi_j) \, \overline{\varphi_k} 
+ 2 i \, (\xi -1) \nabla \varphi_j \cdot (a \cdot  A_0^{\sfrac{1}{2}}) 
\, \overline{\varphi_k} \\
& + \int_{\Omega} (\xi-1) \, a \cdot \nabla (|A_0^{\sfrac{1}{2}}|^2) \, \varphi_j 
\overline{\varphi_k}. 
\end{align*}
\end{claim}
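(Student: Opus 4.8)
The plan is to start from the expression established in Claim~\ref{claim:intermediate} and to turn its right-hand side into the stated one by repeated integration by parts, each step moving a derivative off $\overline{\varphi_k}$ onto $\varphi_j$ or the cut-off $\xi$. Every integrand in Claim~\ref{claim:intermediate} carries a factor which is $\nabla\xi$, a second derivative of $\xi$, or $(\xi-1)$; by \eqref{eq:xi} each of these vanishes in a neighbourhood of $0$, hence by \eqref{eq:regul} all integrands are smooth on $\overline\Omega$, integration by parts on $\Omega$ is legitimate, and no contribution from a small circle around $0$ appears. The boundary terms on $\partial\Omega$ all vanish because $\varphi_j=\varphi_k=0$ there.

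First I would handle the line $\int_\Omega-\nabla\varphi_j\cdot\nabla(a\cdot\nabla\xi)\,\overline{\varphi_k}-(\nabla\varphi_j\cdot\nabla\xi)(a\cdot\nabla\overline{\varphi_k})-(a\cdot\nabla\varphi_j)(\nabla\xi\cdot\nabla\overline{\varphi_k})$. In the last two summands one rewrites $a\cdot\nabla\overline{\varphi_k}$ and $\nabla\xi\cdot\nabla\overline{\varphi_k}$ as $\nabla\overline{\varphi_k}\cdot\big[(\nabla\varphi_j\cdot\nabla\xi)a\big]$, resp.\ $\nabla\overline{\varphi_k}\cdot\big[(a\cdot\nabla\varphi_j)\nabla\xi\big]$, and integrates by parts. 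Since $a$ is constant, $\nabla\cdot\big[(\nabla\varphi_j\cdot\nabla\xi)a\big]=a\cdot\nabla(\nabla\varphi_j\cdot\nabla\xi)=\nabla\xi\cdot\nabla(a\cdot\nabla\varphi_j)+\nabla\varphi_j\cdot\nabla(a\cdot\nabla\xi)$ and $\nabla\cdot\big[(a\cdot\nabla\varphi_j)\nabla\xi\big]=\nabla(a\cdot\nabla\varphi_j)\cdot\nabla\xi+(a\cdot\nabla\varphi_j)\Delta\xi$. The term $\overline{\varphi_k}\,\nabla\varphi_j\cdot\nabla(a\cdot\nabla\xi)$ so produced cancels the first summand, leaving exactly $\int_\Omega\Delta\xi\,(a\cdot\nabla\varphi_j)\,\overline{\varphi_k}+2\nabla\xi\cdot\nabla(a\cdot\nabla\varphi_j)\,\overline{\varphi_k}$, the first line of the target.

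For the two lines carrying the factor $i$ the mechanism is the same, but one must also use that $A_0^{\sfrac12}=\tfrac12\nabla\theta_0$ is smooth and satisfies $\nabla\cdot A_0^{\sfrac12}=0$, $\nabla\times A_0^{\sfrac12}=0$ on $\Omega\setminus\{0\}$ (as in the proof of Lemma~\ref{lemma:symmetry}), so each component is harmonic there and $a\cdot\nabla A_{0,l}^{\sfrac12}=\partial_l(a\cdot A_0^{\sfrac12})$. Integrating by parts $i(A_0^{\sfrac12}\cdot\nabla\xi)(a\cdot\nabla\overline{\varphi_k})\varphi_j$ brings the factor $a\cdot\nabla\big((A_0^{\sfrac12}\cdot\nabla\xi)\varphi_j\big)$; the curl-free identity then yields $a\cdot\nabla(A_0^{\sfrac12}\cdot\nabla\xi)=\nabla\xi\cdot\nabla(a\cdot A_0^{\sfrac12})+A_0^{\sfrac12}\cdot\nabla(a\cdot\nabla\xi)$, and the resulting $-iA_0^{\sfrac12}\cdot\nabla(a\cdot\nabla\xi)\varphi_j\overline{\varphi_k}$ cancels the first summand of that line. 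Next, integrating by parts $-i(\xi-1)\nabla\overline{\varphi_k}\cdot\nabla(a\cdot A_0^{\sfrac12})\varphi_j$ and using $\Delta(a\cdot A_0^{\sfrac12})=0$ produces $+i\varphi_j\overline{\varphi_k}\,\nabla\xi\cdot\nabla(a\cdot A_0^{\sfrac12})$, which cancels the term left over from the previous step and doubles the coefficients of $(\xi-1)\nabla\varphi_j\cdot\nabla(a\cdot A_0^{\sfrac12})\overline{\varphi_k}$ and of $(A_0^{\sfrac12}\cdot\nabla\xi)(a\cdot\nabla\varphi_j)\overline{\varphi_k}$; this gives the middle line of the target. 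Finally $2A_0^{\sfrac12}\cdot\nabla(a\cdot A_0^{\sfrac12})=a\cdot\nabla|A_0^{\sfrac12}|^2$ by curl-freeness, which converts the last summand of Claim~\ref{claim:intermediate} into the last line.

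The only real obstacle here is the bookkeeping: about a dozen terms occur, several of which must be matched across different lines of Claim~\ref{claim:intermediate} and checked to cancel, so the signs and the choice of Leibniz expansion must be tracked carefully. No analytic subtlety beyond the divergence- and curl-freeness (hence componentwise harmonicity) of $A_0^{\sfrac12}$ on $\Omega\setminus\{0\}$ is needed.
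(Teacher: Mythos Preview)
Your proof is correct and follows essentially the same approach as the paper: integrate by parts in the expression of Claim~\ref{claim:intermediate} to move all derivatives off $\overline{\varphi_k}$, then simplify using the vector identities that stem from $\nabla\cdot A_0^{\sfrac12}=0$ and $\nabla\times A_0^{\sfrac12}=0$ on $\Omega\setminus\{0\}$ (in particular $a\cdot\nabla(A_0^{\sfrac12}\cdot\nabla\xi)=A_0^{\sfrac12}\cdot\nabla(a\cdot\nabla\xi)+\nabla\xi\cdot\nabla(a\cdot A_0^{\sfrac12})$, $\Delta(a\cdot A_0^{\sfrac12})=0$, and $2A_0^{\sfrac12}\cdot\nabla(a\cdot A_0^{\sfrac12})=a\cdot\nabla|A_0^{\sfrac12}|^2$). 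Your write-up is in fact more detailed than the paper's, which records the same identities but leaves the term-by-term bookkeeping to the reader.
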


\begin{proofclaim}
First of all, we integrate by parts in Claim~\ref{claim:intermediate} 
all terms containing a derivative of $\varphi_k$ to move it on the 
other terms thanks to the regularity of eigenfunctions. Next we 
use in turn the following identities 
\[
a \cdot \nabla ( \nabla \varphi_j \cdot \nabla \xi) 
= \nabla \varphi_j \cdot \nabla (a \cdot \nabla \xi) 
+ \nabla \xi \cdot (a \cdot \nabla \varphi_j)
\]
and
\begin{equation} \label{eq:help}
A_0^{\sfrac{1}{2}} \cdot \nabla (a \cdot \nabla \xi) 
- a \cdot \nabla (A_0^{\sfrac{1}{2}} \cdot \nabla \xi) 
+ \nabla \xi \cdot \nabla ( a \cdot A_0^{\sfrac{1}{2}} ) = 0,
\end{equation}
\[
\Delta (a \cdot A_0^{\sfrac{1}{2}} ) = 0,
\]
\[
2 A_0^{\sfrac{1}{2}} \cdot \nabla (a \cdot A_0^{\sfrac{1}{2}} )
= a \cdot \nabla (|A_0^{\sfrac{1}{2}}|^2),
\]
which hold true in $\Omega \setminus\{0\}$ since 
$\nabla \times A_0^{\sfrac{1}{2}} = 0$ and 
$\nabla \cdot A_0^{\sfrac{1}{2}} = 0$ in $\Omega \setminus\{0\}$.
\end{proofclaim}

\smallskip

To prove the lemma, it remains us to show that 
\[
\int_{\Omega} (\xi - 1) \, a \cdot \nabla 
\left[ (i \nabla + A_0^{\sfrac{1}{2}})^2 \varphi_j \right] 
\, \overline{\varphi_k}
- (i \nabla + A_0^{\sfrac{1}{2}})^2 \left[ (\xi - 1) a 
\cdot \nabla \varphi_j \right] \, \overline{\varphi_k}
\]
is the same as in Claim~\ref{claim:intermediate2}. We can write 
the first part as
\begin{align*}
(\xi - 1) \, a \cdot \nabla  
\left[ (i \nabla + A_0^{\sfrac{1}{2}})^2 \varphi_j \right] 
& = (\xi - 1) (i \nabla + A_0^{\sfrac{1}{2}})^2 (a \cdot \nabla \varphi_j) \\
& + 2 i (\xi - 1) \left[ a \cdot \nabla ( A_0^{\sfrac{1}{2}} \cdot  \nabla \varphi_j )
- A_0^{\sfrac{1}{2}} \cdot \nabla (a \cdot \nabla \varphi_j ) \right] \\
& + (\xi - 1) a \cdot \nabla ( | A_0^{\sfrac{1}{2}} |^2 ) \varphi_j.
\end{align*}
Using a similar identity to \eqref{eq:help} for the second term, this gives
\begin{align} \label{eq:aaa1}
(\xi - 1) (i \nabla + A_0^{\sfrac{1}{2}})^2 (a \cdot \nabla \varphi_j) 
+ 2 i (\xi - 1) \nabla \varphi_j \cdot \nabla ( a \cdot A_0^{\sfrac{1}{2}} )
+ (\xi - 1) a \cdot \nabla ( | A_0^{\sfrac{1}{2}} |^2 ) \varphi_j.
\end{align}
The second part is
\begin{align*}
- (i \nabla + A_0^{\sfrac{1}{2}})^2 \left[ (\xi - 1) a 
\cdot \nabla \varphi_j \right] 
& = - (\xi-1) (i\nabla + A_0^{\sfrac{1}{2}})^2 (a \cdot \nabla \varphi_j) 
+ \Delta \xi (a \cdot \nabla \varphi_j) \\
& - 2 i \nabla \xi \cdot (i \nabla + A_0^{\sfrac{1}{2}} ) (a \cdot \nabla \varphi_j).
\end{align*}
This gives us
\begin{equation} \label{eq:aaa2}
- (\xi-1) (i\nabla + A_0^{\sfrac{1}{2}})^2 (a \cdot \nabla \varphi_j) 
+ \Delta \xi (a \cdot \nabla \varphi_j)
+ 2 \nabla \xi \cdot \nabla ( a \cdot \nabla \varphi_j) 
- 2i \, (\nabla \xi \cdot A_0^{\sfrac{1}{2}} ) \,  (a \cdot \nabla \varphi_j).
\end{equation}
By summing \eqref{eq:aaa1} and \eqref{eq:aaa2} we recognize the final claim.

\resetclaim

\section{Proof of Claim~\ref{claim:general-expression-G-prim} in Lemma~\ref{lemma:first-order-G-a}} \label{app:Gprim}

Let $\varepsilon := \alpha - \tfrac{1}{2}$. We first look at
\begin{align*}
& \phantom{a}_{(H^{1,0}_0(\Omega,\C))^\star}\left\langle 
\mathcal{L}_{(a,\alpha)} \left( \frac{\varphi_j}{\sqrt{J_a}} \right), 
\sqrt{J_a} \varphi_k \right\rangle_{H^{1,0}_0(\Omega,\C)} 
= \phantom{a}_{(H^{1,0}_0(\Omega,\C))^\star}\left\langle 
\mathcal{L}_{(a,\alpha)} \varphi_j, \varphi_k 
\right\rangle_{H^{1,0}_0(\Omega,\C)} \\
& \hspace{4cm} + \phantom{a}_{(H^{1,0}_0(\Omega,\C))^\star}\left\langle 
\mathcal{L}_{(a,\alpha)} v, (\sqrt{J_a} - 1) \varphi_k
\right\rangle_{H^{1,0}_0(\Omega,\C)} \\
& \hspace{4cm} + \phantom{a}_{(H^{1,0}_0(\Omega,\C))^\star}\left\langle 
\mathcal{L}_{(a,\alpha)} \left( v \frac{1 - \sqrt{J_a}}{\sqrt{J_a}} \right), 
\sqrt{J}_a \varphi_k 
\right\rangle_{H^{1,0}_0(\Omega,\C)} .
\end{align*}
Since, by Lemma~\ref{lemma:first-order-La}, $\mathcal{L}_{(a,\alpha)} 
= O(|(a,\varepsilon)|)$ for $(a,\varepsilon) \to 0$, and by \eqref{eq:xi} and 
\eqref{eq:det}  
\[
| 1 - \sqrt{J_a} | \leq C \, |(a,\varepsilon)|,
\]
it holds that the second and third terms are bounded by $o(|a,\varepsilon)|)$, 
as $|(a,\varepsilon)| \to 0$. In the first term, from Lemma~\ref{lemma:first-order-La} 
we conclude that the first order term is 
\begin{equation} \label{eq:bbb1}
\phantom{a}_{(H^{1,0}_0(\Omega,\C))^\star}\left\langle 
\mathcal{L}'(0,\tfrac{1}{2}) \varphi_j, \varphi_k 
\right\rangle_{H^{1,0}_0(\Omega,\C)}.
\end{equation}

Next, we look at 
\begin{align*}
& \phantom{a}_{(H^{1,0}_0(\Omega,\C))^\star}\left\langle 
(i \nabla + A_0^\alpha)^2 \left( \frac{\varphi_j}{\sqrt{J_a}} \right),
\sqrt{J_a} \varphi_k 
\right\rangle_{H^{1,0}_0(\Omega,\C)} \\
& \hspace{6cm} := \int_{\Omega} (i \nabla + A_0^\alpha) \left( \frac{\varphi_j}{\sqrt{J_a}} \right) 
\cdot \overline{ (i \nabla + A_0^\alpha) (\sqrt{J_a} \varphi_k )},
\end{align*}
by definition. Split it again in several pieces, the left part gives
\begin{align*}
(i \nabla + A_0^{\sfrac{1}{2}}) \varphi_j 
+ (i \nabla + A_0^{\sfrac{1}{2}}) \left(\varphi_j ( 1 - \sqrt{J_a} ) (\sqrt{J_a})^{-1} \right) 
+ 2 \varepsilon A_0^{\sfrac{1}{2}} \varphi_j 
+ 2 \varepsilon A_0^{\sfrac{1}{2}} \varphi_j ( 1 - \sqrt{J_a} ) (\sqrt{J_a})^{-1} ,
\end{align*}
while the right part reads
\begin{align*}
 \overline{ \left[ (i \nabla + A_0^{\sfrac{1}{2}}) \varphi_k 
+ (i \nabla + A_0^{\sfrac{1}{2}}) ((\sqrt{J_a} - 1)\varphi_k) 
+ 2 \varepsilon A_0^{\sfrac{1}{2}} \varphi_k 
+ 2 \varepsilon A_0^{\sfrac{1}{2}} (\sqrt{J_a} - 1) \varphi_k \right]}.
\end{align*}
Here, using \eqref{eq:xi} and \eqref{eq:det} we get
\[
| (\sqrt{J_a} - 1) (\sqrt{J_a})^{-1} + \tfrac{1}{2} a \cdot \nabla \xi| 
\leq C |(a,\varepsilon)|^2,
\]
and
\[
|\sqrt{J_a} - 1 - \tfrac{1}{2} a \cdot \nabla \xi| \leq C |(a,\varepsilon)|^2,
\]
as $|(a,\varepsilon)| \to 0$, for $C > 0$ independent of $|(a,\varepsilon)|$. 
Therefore the unperturbed term is
\[
\int_{\Omega} (i \nabla + A_0^{\sfrac{1}{2}}) \varphi_j 
\cdot \overline{(i\nabla + A_0^{\sfrac{1}{2}}) \varphi_k} 
:= \phantom{a}_{(H^{1,0}_0(\Omega,\C))^\star}\left\langle 
(i \nabla + A_0^{\sfrac{1}{2}})^2 \varphi_j, \varphi_k 
\right\rangle_{H^{1,0}_0(\Omega,\C)},
\]
while the first order terms are given by
\begin{equation} \label{eq:bbb2}
\begin{split}
& \frac{1}{2} \int_{\Omega}  (i \nabla + A_0^{\sfrac{1}{2}}) \varphi_j 
\cdot \overline{ (i \nabla + A_0^{\sfrac{1}{2}}) ( a \cdot \nabla \xi) \varphi_k } 
- (i \nabla + A_0^{\sfrac{1}{2}}) ((a\cdot \nabla \xi) \varphi_j) \cdot 
\overline{(i \nabla + A_0^{\sfrac{1}{2}}) \varphi_k} \\
& \qquad \qquad + 2 \varepsilon \int_{\Omega} (i \nabla + A_0^{\sfrac{1}{2}}) \varphi_j 
\cdot A_0^{\sfrac{1}{2}} \overline{\varphi_k} 
+ A_0^{\sfrac{1}{2}} \varphi_j \cdot 
\overline{(i \nabla + A_0^{\sfrac{1}{2}}) \varphi_k}.
\end{split}
\end{equation}
An integration by parts in \eqref{eq:bbb2} together with \eqref{eq:bbb1} gives us the result.

\section*{Acknowledgements}

The authors would like to thank their mentor prof. Susanna Terracini 
for her encouragement and useful discussions on the theme, as well as prof. Virginie
Bonnaillie-Noël for her courtesy to provide all the figures. 

The authors are
partially supported by the project ERC Advanced Grant
2013 n. 339958 : ``Complex Patterns for Strongly Interacting Dynamical
Systems -- COMPAT''. 
 L. Abatangelo is partially
supported by the PRIN2015 grant ``Variational methods, with
applications to problems in mathematical physics and geometry''
and by the 2017-GNAMPA project ``Stabilità e analisi
  spettrale per problemi alle derivate parziali''.


\end{document}